\documentclass[a4paper]{amsart}
\usepackage{graphicx}
\usepackage{amssymb,mathtools}
\usepackage{mathrsfs}
\usepackage{stmaryrd}
\usepackage[mathcal]{euscript}
\usepackage{tikz}
\usepackage{tikz-cd}
\usetikzlibrary{decorations.pathreplacing}

\usepackage{tcolorbox}

\usepackage{hyperref}
\hypersetup{%
  bookmarksnumbered=true,%
  colorlinks=true,%
  linkcolor=blue,%
  citecolor=blue,%
  filecolor=blue,%
  menucolor=blue,%
  urlcolor=blue,%
  bookmarksopen=true,%
  bookmarksdepth=2,%
  pageanchor=true}

\title[Infinite length modules over tame hereditary algebras]{From finite to infinite length modules\\ over tame hereditary algebras}

\author{Lidia Angeleri H\"ugel}
\address{Dipartimento di Informatica - Settore di Matematica\\
Università degli Studi di Verona\\
I-37134 Verona\\ Italy}
\email{lidia.angeleri@univr.it}

\author{Andrew Hubery}
\address{Fakult\"at f\"ur Mathematik\\
Universit\"at Bielefeld\\ D-33501 Bielefeld\\ Germany}
\email{hubery@math.uni-bielefeld.de}

\author{Henning Krause}
\address{Fakult\"at f\"ur Mathematik\\
Universit\"at Bielefeld\\ D-33501 Bielefeld\\ Germany}
\email{hkrause@math.uni-bielefeld.de}

\theoremstyle{plain}
\newtheorem{thm}{Theorem}[section]
\newtheorem{prop}[thm]{Proposition}
\newtheorem{lem}[thm]{Lemma} 

\newtheorem{cor}[thm]{Corollary}

\theoremstyle{definition}
\newtheorem{defn}[thm]{Definition}
\newtheorem{exm}[thm]{Example}

\theoremstyle{remark}
\newtheorem{rem}[thm]{Remark}

\numberwithin{equation}{section}

\newcommand{\add}{\operatorname{add}}
\newcommand{\Add}{\operatorname{Add}}

\newcommand{\Coh}{\operatorname{Coh}}

\newcommand{\Coker}{\operatorname{Coker}}
\newcommand{\Coprod}{\operatorname{Coprod}}
\newcommand{\colim}{\operatorname*{colim}}

\newcommand{\End}{\operatorname{End}}

\newcommand{\Ext}{\operatorname{Ext}}

\newcommand{\fp}{\operatorname{fp}}

\newcommand{\Hom}{\operatorname{Hom}}

\renewcommand{\Im}{\operatorname{Im}}

\newcommand{\Ker}{\operatorname{Ker}}

\renewcommand{\mod}{\operatorname{mod}}
\newcommand{\Mod}{\operatorname{Mod}}

\newcommand{\PExt}{\operatorname{PExt}}

\newcommand{\Prod}{\operatorname{Prod}}

\newcommand{\Tor}{\operatorname{Tor}}

\newcommand{\Ab}{\mathrm{Ab}}

\newcommand{\op}{\mathrm{op}}

\newcommand{\character}{\raisebox{1pt}[0pt][2pt]{$\chi$}}

\newcommand{\iso}{\xrightarrow{\raisebox{-.4ex}[0ex][0ex]{$\scriptstyle{\sim}$}}}

\newcommand*{\intref}[2]{\def\tmp{#1}\ifx\tmp\empty\hyperref[#2]{\ref*{#2}}\else\hyperref[#2]{#1~\ref*{#2}}\fi}

\renewenvironment{quote}{%
  \list{}{%
    \leftmargin5ex   
    \rightmargin\leftmargin
  }
  \item\relax
}
{\endlist}
     
\def\A{\mathcal A} 
 
\def\C{\mathcal C}
\def\D{\mathcal D} 
\def\E{\mathcal E} 
\def\F{\mathcal F}

\def\I{\mathcal I}

\def\P{\mathcal P}
\def\R{\mathcal R}
 
\def\T{\mathcal T} 
\def\U{\mathcal U}

\def\X{\mathcal X}
\def\Y{\mathcal Y}

\def\bfL{\mathbf L}

\def\bfL{\mathbf L}

\def\bbN{\mathbb N}

\def\bbX{\mathbb X} 
 
\def\bbZ{\mathbb Z}

\newcommand{\frt}{\mathfrak t}
\newcommand{\frf}{\mathfrak f}
\newcommand{\fri}{\mathfrak i}

\def\La{\Lambda}
\def\Si{\Sigma}

\begin{document}

\date{\today}

\begin{abstract}
  A self-contained introduction to infinite dimensional
  representations over a tame hereditary algebra is provided, assuming
  a basic knowledge of the category of finite dimensional
  representations. This includes a complete description of all
  pure-injective modules.  Of particular interest are the torsionfree
  divisible modules, which are precisely the direct sums of copies of
  the unique generic module.
\end{abstract}

\dedicatory{Dedicated to Claus Michael Ringel on the occasion of his
  80th birthday.}

\subjclass[2020]{16G10 (primary); 16D70, 18E45 (secondary)}

\maketitle 

\section{Introduction}

Beyond finite representation type, a systematic study of
infinite dimensional representations over finite dimensional algebras was
initiated by Claus Michael Ringel in his seminal `Rome notes'
published in 1979 \cite{Ri1979}. We quote from its introduction:
\begin{quote}
  Why is it of interest to consider infinite dimensional
  representations? We believe that the only reason for the usual
  restriction in dealing with finite dimensional algebras to consider
  only modules of finite length, is the fact that modules of finite
  length are easier to handle.
\end{quote}
 One of the main achievements in Ringel's work is to single out a
particular module for a tame hereditary algebra; it is the unique
indecomposable representation which is of infinite length but of
finite endolength.  Following Crawley-Boevey, such modules are now
called `generic' \cite{CB1992}. Let us see how Ringel introduces this
module in his work:
\begin{quote}
  A typical example would be the Kronecker module
  $(k[X], k[X], \text{id}, \cdot X)$. Of course, in dealing with
  finitely generated modules over a general ring, the injective
  envelopes of the finitely generated modules are important. They are
  usually no longer finitely generated, but are for certain types of
  rings, for example noetherian rings, well behaved. In a suitable
  subcategory, the injective envelope of the Kronecker module $(k[X],
  k[X], \text{id}, \cdot X)$ is just the Kronecker module $Q = (k(X),
  k(X), \text{id}, \cdot X)$,
  with $k(X)$ being the field of rational functions in one variable. We
  will see that this Kronecker module plays a dominant role, it will
  be characterized as the unique indecomposable `torsionfree
  divisible' module. Thus, there always will be certain important
  infinite dimensional representations, and we will see that the
  investigation of these modules also gives some new insight into the
  behaviour of the modules of finite length.
\end{quote}

Almost 50 years later, some further techniques have been developed,
and the purpose of this note is to give from this perspective a rather
self-contained introduction to infinite dimensional representations
over a tame hereditary algebra. We assume a basic knowledge about
the category of finite dimensional representations and concentrate on
techniques that help to understand the class of pure-injective
modules. A full classification of these modules is possible when the
algebra is tame hereditary, and this class includes the torsionfree
divisible modules.

Most of the results about modules over tame hereditary
algebras which are presented here are due to Ringel \cite{Ri1979}. In the last section we
include some further material and give appropriate references. In
particular, we provide a complete description of all pure-injective
modules; while the result is known, our proof seems to be new. For unexplained concepts
and general facts about modules of infinite length we refer to
\cite{CB2,JL1989,Kr2022,KR2000,Pr2009}.

\section{Preliminaries}

Let $k$ be a field, and $\Lambda$ a finite dimensional $k$-algebra
which is tame hereditary and connected. We study the category
$\Mod\Lambda$ of all right $\Lambda$-modules, and denote by
$\mod\Lambda$ the full subcategory of finite dimensional
$\Lambda$-modules. Left $\Lambda$-modules are identified with right
modules over the opposite algebra $\Lambda^\op$. For the definition of
a tame hereditary algebra and the structure of $\mod\Lambda$ we refer
to \cite{DR1976}.

The vector space dual $D=\Hom_k(-,k)$ yields for a $\Lambda$-module
$X$ and a $\Lambda^\op$-module $Y$  natural isomorphisms
\begin{equation}\label{eq:k-dual}
  \Hom_\Lambda(X,DY) \cong D(X\otimes_\Lambda Y) \cong
  \Hom_{\Lambda^\op}(Y,DX).
\end{equation}  

For an additive category $\C$ and full additive subcategories $\C_1,\C_2$ we write
\[ \C = \C_1 \vee \C_2 \]
provided $\C_1\cap\C_2=\{0\}$ and every object $C\in\C$ admits a
decomposition $C=C_1\oplus C_2$ with $C_i\in\C_i$.

Given a full additive subcategory $\C\subseteq\mod\Lambda$ we set
\[ \bar\C \coloneqq \{ X \in \Mod\Lambda \mid X = \colim_{i\in I} C_i \text{ with all } C_i \in \C \}, \]
so the completion of $\C$ under filtered colimits. Also, let
\[ \C^\perp \coloneqq \{ X \in \Mod\Lambda \mid
  \Hom_\Lambda(C,X)=0=\Ext^1_\Lambda(C,X) \text{ for all } C\in\C
  \} \] be the right perpendicular category.

\subsection*{Torsion pairs}

Recall that a pair of full subcategories  $(\T,\F)$ of an abelian
category $\A$ is a \emph{torsion pair} if 
there are equalities
\[ \T =\{ X \in \A\mid \Hom(X,Y)=0\text{ for all } Y\in\F \} \]
\[ \F =\{ Y \in \A\mid \Hom(X,Y)=0\text{ for all } X\in\T \}\]
and each  $X\in\A$ fits into an exact sequence
$0 \to \frt(X) \to X \to \frf(X) \to 0$ with $\frt(X)\in\T$ and 
$\frf(X)\in\F$. In this case $\frt$ provides a right adjoint for the
inclusion $\T \to\A$ and $\frf$ provides a left adjoint for the
inclusion $\F \to\A$.

\begin{lem}
  Let $(\T,\F)$ be a torsion pair for $\mod\Lambda$. Then
  $(\bar\T,\bar\F)$ is a torsion pair for $\Mod\Lambda$, and
  $\bar\F=\{ Y \in \Mod\Lambda \mid \Hom_\Lambda(X,Y)=0 \text{ for all
  } X\in\T \}$.
\end{lem}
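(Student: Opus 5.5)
We have to show that $(\bar\T,\bar\F)$ is a torsion pair for $\Mod\Lambda$, and that $\bar\F$ equals $\T^{\perp_0}\coloneqq\{Y\mid\Hom_\Lambda(X,Y)=0 \text{ for all } X\in\T\}$. The plan uses three standard facts: every $\Lambda$-module is the filtered colimit of its finite dimensional submodules; $\Hom_\Lambda(C,-)$ commutes with filtered colimits when $C$ is finitely presented; and $\T$ is closed under quotients and extensions while $\F$ is closed under submodules and extensions.

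\emph{Step 1: $\bar\T$ is the class of modules all of whose finite dimensional submodules lie in $\T$, and more generally every quotient of a module in $\bar\T$ lies in $\bar\T$.} Let $X=\colim C_i$ with $C_i\in\T$. Any finite dimensional submodule $U\subseteq X$ is contained in the sum of the images of finitely many $C_i$, so $U\subseteq V$ where $V$ is a quotient of a finite direct sum of the $C_i$. Hence $V\in\T$. This alone does not force $U\in\T$, so the cleaner way to phrase $\bar\T$ is as follows. $X\in\bar\T$ if and only if $X$ is the directed union of the images of maps $C\to X$ with $C\in\T$. These images lie in $\T$, being quotients. In particular $\bar\T$ is closed under quotients, since the images of maps $C\to X\to X'$ again cover $X'$.

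\emph{Step 2: $\bar\F$ is the class of modules all of whose finite dimensional submodules lie in $\F$.} Write $Y$ as the directed union of its finite dimensional submodules. If all of these lie in $\F$, then $Y\in\bar\F$. Conversely, suppose $Y=\colim F_i$ with $F_i\in\F$ and let $U\subseteq Y$ be finite dimensional. We need $U\in\F$, that is, $\Hom(T,U)=0$ for all $T\in\T$. It suffices to show $\Hom(T,Y)=0$. Since $T$ is finitely presented,
\[\Hom(T,Y)=\colim\Hom(T,F_i)=0.\]
The same computation shows $\bar\F\subseteq\T^{\perp_0}$. For the reverse inclusion, if $\Hom(\T,Y)=0$ then every finite dimensional submodule $U$ of $Y$ satisfies $\Hom(\T,U)=0$, so $U\in\F$, and therefore $Y\in\bar\F$. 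This proves the asserted description of $\bar\F$.

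\emph{Step 3: the torsion sequence.} For $X\in\Mod\Lambda$ let $\frt(X)$ be the sum of the images of all maps $C\to X$ with $C\in\T$. This sum is directed, because $\T$ is closed under finite sums, so $\frt(X)\in\bar\T$ by Step 1. Put $Y=X/\frt(X)$. To see that $Y\in\bar\F$, let $U\subseteq Y$ be finite dimensional and take $\frt(U)\subseteq U$ in $\mod\Lambda$. Lift $\frt(U)$ to a finite dimensional submodule $W\subseteq X$ mapping onto it. Then $W\cap\frt(X)$ is finite dimensional, hence is contained in the image of a single map from an object of $\T$, hence lies in $\T$. Because $\T$ is extension closed, $W$ then sits in an extension with $\T$-ends, so $W\in\T$. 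It follows that $W\subseteq\frt(X)$, so $\frt(U)=0$ and $U\in\F$.

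\emph{Step 4: $\bar\T$ is the left orthogonal of $\bar\F$.} For $X\in\bar\T$ and $Y\in\bar\F$, any map $X\to Y$ restricted to an image of $C\in\T$ vanishes by Step 2, so $\Hom(\bar\T,\bar\F)=0$. Conversely, if $\Hom(X,\bar\F)=0$, then the projection $X\to X/\frt(X)$ is zero, so $X=\frt(X)\in\bar\T$. Step 2 already gives that $\bar\F$ is the right orthogonal of $\bar\T$, via the description $\T^{\perp_0}$ and the inclusion $\T\subseteq\bar\T$. This completes the torsion pair.

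The main obstacle is Step 3, specifically the claim that a finite dimensional submodule of $\frt(X)$ lies in $\T$. This is not automatic, because $\T$ need not be closed under submodules. I would repair it by replacing "finite dimensional submodule of $\frt(X)$" with "image of a map from $\T$", and then showing directly that $\Hom(T',Y)=0$ for all $T'\in\T$. Given $f\colon T'\to Y$, lift it to a map $T'\to X/V$ where $V\subseteq\frt(X)$ is an image of some $C\in\T$; this is possible because $T'$ is finitely presented and $Y=\colim X/V$. Then use $\Ext^1_\Lambda(T',-)$ together with extension closure of $\T$: the pullback $E$ of $0\to V\to X\to X/V\to 0$ along the lifted map is an extension of $T'$ by the image $V\in\T$, so $E\in\T$, and the image of $E$ in $X$ lies in $\frt(X)$. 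Hence $f=0$, and Step 2 then gives $Y\in\bar\F$.
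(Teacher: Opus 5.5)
Your argument is correct once Step~3 is replaced by your repaired version, but it takes a different route from the paper.

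\textbf{The paper's route.} The paper writes an arbitrary $X$ as $\colim_i X_i$ with $X_i\in\mod\Lambda$. It takes the functorial torsion sequences $0\to\frt(X_i)\to X_i\to\frf(X_i)\to0$ and passes to their filtered colimit, which is exact. This yields the torsion sequence of $X$, with end terms in $\bar\T$ and $\bar\F$, in one step. Orthogonality comes from $\Hom_\Lambda(\colim_i X_i,\colim_j Y_j)=\lim_i\colim_j\Hom_\Lambda(X_i,Y_j)=0$. The description of $\bar\F$ then follows from the torsion pair, because $\Hom_\Lambda(-,Y)$ turns filtered colimits into limits.

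\textbf{Your route.} You build $\frt(X)$ as the trace of $\T$ in $X$. You then have to prove separately that $X/\frt(X)\in\bar\F$, which is what forces the lifting-and-pullback argument using extension closure of $\T$ and finite presentation of $T'$. In return you get two concrete by-products:
\begin{itemize}
\item $\bar\F$ is exactly the class of modules whose finite dimensional submodules lie in $\F$;
\item $\T^{\perp_0}=\bar\F$ holds independently of the torsion pair statement.
\end{itemize}
The paper's device would have removed your obstacle entirely. If the $X_i$ are the finite dimensional submodules of $X$, then $\bigcup_i\frt(X_i)$ is precisely your trace, and $X/\frt(X)=\colim_i\frf(X_i)$ visibly lies in $\bar\F$.

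\textbf{Clean-up for a final write-up.} Delete the two false intermediate claims.
\begin{itemize}
\item The heading of Step~1 is wrong. A module in $\bar\T$ can have finite dimensional submodules outside $\T$; for example, with $\T=\R\vee\I$, a regular module has preprojective submodules.
\item The first version of Step~3 fails at ``contained in the image of a single map from $\T$, hence lies in $\T$'', since $\T$ is not closed under submodules.
\end{itemize}
Only the repaired argument should remain: factor $f$ through some $X/V$, pull back, and use $E\in\T$.
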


\begin{proof}
For $X=\colim_i X_i$ in $\bar\T$ and $Y=\colim_j Y_j$ in $\bar\F$ we have
\[ \Hom_\Lambda(X,Y) = \lim\nolimits_i \colim\nolimits_j \Hom_\Lambda(X_i,Y_j) = 0. \]
For an arbitrary module $X=\colim_{i\in I} X_i$, written as filtered colimit of finite dimensional modules, we have the canonical exact sequences
\[ 0 \to \frt(X_i) \to X_i \to \frf(X_i) \to 0 \qquad (i\in I) \]
with $\frt(X_i)\in\T$ and $\frf(X_i)\in\F$. Taking their filtered
colimit provides an exact sequence with end terms in $\bar\T$ and
$\bar\F$, respectively. The last assertion is clear since
$\Hom_\Lambda(-,Y)$ takes filtered colimits to limits. 
\end{proof}

\begin{rem}
  Suppose that $(\T,\F)$ is a split torsion pair for $\mod\La$. Then
  any exact sequence $0\to X'\to X\to X''\to 0$ with
  $X'\in\bar\T$ and $X''\in\bar\F$ is \emph{pure exact}, so a filtered
  colimit of split exact sequences. Moreover such a
  sequence splits whenever there are only finitely many indecomposables
  either in $\T$ or in $\F$; see Corollary~\ref{co:endofinite}.
\end{rem}

Our starting point is the decomposition
\[ \mod\Lambda = \P \vee \R \vee \I \] where $\P$, $\R$ and $\I$ are
the full subcategories of \emph{preprojective}, \emph{regular}, and
\emph{preinjective} modules, respectively. These subcategories are
defined in terms of the Auslander--Reiten translate; see \cite{DR1976}
and further below for more details. Note that $(\R\vee\I,\P)$ is a
split torsion pair for $\mod\Lambda$, as is $(\I,\P\vee\R)$. It
follows that $(\overline{\R\vee\I},\bar\P)$ is a torsion pair for
$\Mod\Lambda$, and so every module $X$ admits a pure exact sequence
\[ 0 \to \frt(X) \to X \to \frf(X) \to 0. \]

\begin{defn}
Let $X$ be a $\Lambda$-module. Then we say:
\begin{enumerate}
\item $X$ is \emph{preinjective} if it is in $\bar\I$
\item $X$ is \emph{torsion} if it is in $\overline{\R\vee\I}$
\item $X$ is \emph{torsionfree} if it is in $\bar\P$
\end{enumerate}
As every finite dimensional preinjective module is a quotient of a regular module, we see that $X$ is torsionfree if and only if $\Hom_\Lambda(\R,X)=0$. We further define
\begin{enumerate}
\setcounter{enumi}{3}
\item $X$ is \emph{divisible} if $\Hom_\Lambda(X,\R)=0$
\item $X$ is \emph{regular} if $\Hom_\Lambda(X,\P)=0=\Hom_\Lambda(\I,X)$
\end{enumerate}
A module $X$ is divisible if and only if $\Ext^1_\Lambda(\R,X)=0$. This
follows from the Auslander--Reiten formula.
\end{defn}

\subsection*{The Auslander--Reiten translate}

Because $\La$ is hereditary the bimodule
$\tau^-\Lambda\coloneqq\Ext^1(D\Lambda,\Lambda)$ yields the adjoint
pair of functors \[\tau^-\coloneqq -\otimes_\Lambda\tau^-\Lambda\qquad\text{and}
\qquad\tau\coloneqq\Hom_\Lambda(\tau^-\Lambda,-)\] on $\Mod\Lambda$
extending the usual Auslander--Reiten translate on $\mod\Lambda$. A
finite dimensional indecomposable module $X$ is then preprojective if
and only if $\tau^nX=0$ for some $n>0$, and preinjective if and only
if $\tau^{-n}X=0$ for some $n>0$.

As $\Lambda$ is tame, one can furthermore define the \emph{defect} $\partial(X)$ of a finite dimensional module $X$. This is an additive function taking integer values such that, if $X$ is indecomposable, then $X$ is preprojective if and only if $\partial(X)<0$, and preinjective if and only if $\partial(X)>0$. We further assume that $\partial$ is normalised, so has range $\bbZ$.

\subsection*{Preinjective modules}

We begin with a description of the preinjective modules because their
structure is relatively simple.

\begin{prop}\label{pr:preinj}
  Every module $X$ admits a maximal preinjective submodule $\fri(X)$,
  which is furthermore a pure submodule and a direct sum of
  indecomposable direct summands of $X$ lying in $\I$.
\end{prop}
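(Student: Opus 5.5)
Since $(\I,\P\vee\R)$ is a split torsion pair for $\mod\Lambda$, the Lemma applied to it shows that $(\bar\I,\overline{\P\vee\R})$ is a torsion pair for $\Mod\Lambda$. We therefore define $\fri(X)$ as the torsion part of $X$ for this pair. Then $\fri(X)$ is the largest submodule of $X$ lying in $\bar\I$: any preinjective submodule $U\subseteq X$ maps to zero in $X/\fri(X)\in\overline{\P\vee\R}$, so $U\subseteq\fri(X)$. The exact sequence $0\to\fri(X)\to X\to X/\fri(X)\to 0$ is the filtered colimit of the canonical sequences $0\to\frt(X_i)\to X_i\to\frf(X_i)\to 0$ taken over finite dimensional $X_i$, as in the proof of the Lemma. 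Each of these sequences splits because the torsion pair is split, so the colimit is pure exact. This gives maximality and purity.

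For the decomposition we use a finiteness property of $\I$. Fix an indecomposable $I\in\I$. Since $\tau^{-n}I=0$ for some $n$, there are only finitely many indecomposables $J\in\I$ with $\Hom(I,J)\neq 0$: these are successors of $I$ in the preinjective component. Choose $n$ minimal with $\tau^{-n}I=0$, so that $\tau^{-(n-1)}I$ is injective.

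\begin{enumerate}
\item The key claim is that every $M\in\bar\I$ is a direct sum of modules in $\I$.
\item Write $M=\colim M_i$ with $M_i\in\I$. Each $M_i$ is a finite direct sum of indecomposables from $\I$, and only finitely many such indecomposables lie in the relevant ``slices''.
\item I would use the endofiniteness route: modules in $\bar\I$ should be $\Sigma$-pure-injective, because the indecomposable preinjectives of bounded $\tau^-$-orbit position form a finite set.
\item The cleaner approach is to filter $\bar\I$ by the subcategories $\bar\I_n$, where $\I_n=\add\{I : \tau^{-n}I=0\}$. Each $\I_n$ contains only finitely many indecomposables, since $\I$ has finitely many $\tau$-orbits.
\item For a category with finitely many indecomposables $C_1,\dots,C_r$, the module $C=\bigoplus C_j$ has finite length over $\End(C)$. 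Its closure under filtered colimits is therefore $\Add C$, by the standard finite-type argument: $\Hom(C,-)$ identifies $\bar{\add C}$ with flat modules over the artinian ring $\End(C)^{\op}$, which are projective. Hence $\bar\I_n=\Add\I_n$.
\end{enumerate}

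The main obstacle is that $\bar\I$ is not the union of the $\bar\I_n$, so a single $M\in\bar\I$ need not lie in any $\bar\I_n$. To handle this, set $M_{(n)}$ to be the torsion part of $M$ with respect to $(\bar\I_n,\bar\I_n^{\perp_0})$. This is a torsion pair because $\I_n$ is closed under successors, and hence $(\I_n,{}^\perp)$ is a split torsion pair in $\mod\Lambda$. We get an ascending chain of pure submodules $M_{(1)}\subseteq M_{(2)}\subseteq\cdots$ with union $M$, and each $M_{(n)}$ lies in $\Add\I_n$, so is $\Sigma$-pure-injective. Since $M_{(n)}$ is pure in $M_{(n+1)}$ and pure-injective, the inclusion splits, and we obtain $M_{(n+1)}=M_{(n)}\oplus C_n$ with $C_n\in\Add\I_{n+1}$. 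Thus $M=\bigoplus_n C_n$ is a direct sum of indecomposables in $\I$.

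Applying this to $M=\fri(X)$ gives the decomposition of $\fri(X)$. To see that these indecomposables are direct summands of $X$, note that each finite dimensional summand $J$ of $\fri(X)$ is pure in $X$ and is pure-injective, being finite dimensional. So $J$ is a direct summand of $X$.
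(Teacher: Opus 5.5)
Your proof is correct and is essentially the paper's argument: you filter by $\I_n=\Ker\tau^{-n}\cap\mod\Lambda$, use that $\bar\I_n$ consists of direct sums of the finitely many indecomposables in $\I_n$, and split off the successive layers of the chain of maximal $\bar\I_n$-submodules, whose union is $\fri(X)$. The differences are minor: you justify $\bar\I_n=\Add\I_n$ via flat-equals-projective over the artinian ring $\End(C)$ where the paper cites Corollary~\ref{co:endofinite}, and you run the filtration inside $\fri(X)$ and then split each finite dimensional summand off $X$ by purity, whereas the paper observes directly that each $\fri_n(X)$ is a direct summand of $X$.
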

\begin{proof}
  We follow \cite[\S3]{Ri1979}.  Using the split torsion pair
  $(\I,\P\vee\R)$ on $\mod\Lambda$ we see that every module $X$ admits
  a maximal preinjective submodule $\fri(X)$, which is furthermore a
  pure submodule of $X$. Explicitly, we proceed inductively as
  follows.  For $n\ge 0$ let
  $\I_n=(\Ker\tau^{-n})\cap\mod\Lambda$. This is a torsion class in
  $\mod\La$ and we write $\fri_n(X)$ for the maximal submodule of $X$
  in $\bar\I_n=\Ker\tau^{-n}$, which is a direct summand of
  $X$.  Here one uses that $\I_n$ contains up to isomorphism only finitely many
  indecomposables, and therefore $\bar\I_n$ consists of the
  direct sums of copies of the indecomposables in $\I_n$; cf.\ Corollary~\ref{co:endofinite}.  We obtain a
  chain of split monomorphisms
  $\fri_0(X)\to \fri_1(X)\to \fri_2(X)\to\cdots$ such that
  \[\fri(X)=\bigcup_{n\ge 0}\fri_n(X)=\bigoplus_{n\ge
    0}(\fri_{n+1}/\fri_n)(X)\] where $(\fri_{n+1}/\fri_n)(X)$ is a
  direct sum of copies of the finitely many indecomposables in
  $\I_{n+1}-\I_n$.
\end{proof}

\section{Purity and endofiniteness}\label{se:purity}

In this section we collect some of the concepts that are used to study
modules of infinite length. Of particular interest are pure-injective
modules, with important examples being the endofinite modules. We
discuss the decomposition properties of such modules and introduce
the useful notion of a definable subcategory. Much of this material is
based on work of Crawley-Boevey \cite{CB1992,CB2} and actually independent
from the fact that $\La$ is tame hereditary.

\subsection*{Purity}

Recall that an exact sequence $0\to X\to Y\to Z\to 0$ in $\Mod\La$ is
\emph{pure exact} if it stays exact after applying $-\otimes_\La M$
for any $\La^\op$-module $M$. An equivalent condition is that the
sequence is a filtered colimit of split exact sequences. In this case
$X\to Y$ is called \emph{pure monomorphism}, and $X$ is
\emph{pure-injective} if every pure monomorphism $X\to Y$ splits. The
module $X$  is \emph{$\Si$-pure injective} if every direct sum of copies
of $X$ is pure-injective, and $X$ is \emph{endofinite} if it is of finite
length as an $\End_\La(X)$-module.

\subsection*{Coherent functors} 

Recall that an additive functor $F\colon\Mod\Lambda\to\Ab$ into the
category of abelian groups is \emph{coherent} if it admits a presentation
\[ \Hom_\Lambda(C_1,-) \xrightarrow{(\phi,-)} \Hom_\Lambda(C_0,-) \longrightarrow F \longrightarrow 0 \]
with $\phi\colon C_0 \to C_1$ in $\mod\Lambda$. We note that $\Ext^1_\Lambda(C,-)$ and $-\otimes_\Lambda C'$ are coherent for all $C\in\mod\Lambda$ and $C'\in\mod\Lambda^\op$.

The following lemma collects some basic properties which will be used freely.

\begin{lem}\label{le:coherent}
The coherent functors form a full abelian subcategory of the
category of all additive functors $\Mod\Lambda\to\Ab$. 
Moreover, for $F$ coherent the following hold.
\begin{enumerate}
\item The functor $F$ preserves filtered colimits and products.
\item The functor $F$ maps pure exact sequences to exact sequences.
\item The assignment  \[ \Coker\Hom_\Lambda(\phi,-) = F \longmapsto F^\vee\coloneqq \Ker(\phi\otimes_\Lambda-) \]
induces a contravariant equivalence between the coherent functors for
$\Lambda$ and those for $\Lambda^\op$. Moreover, there is a natural
isomorphism $F\cong
(F^\vee)^\vee$. 
\item  For $X\in\Mod\Lambda$ and  $Y\in\Mod\La^\op$ there are natural isomorphisms
  \[F(D^2X)\cong D^2F(X)\qquad\text{and}\qquad DF^\vee(Y)\cong F(DY).\] 
\item The functor $F$ is simple if and only if $F=\Coker\Hom_\Lambda(\phi,-)$ for some left almost split morphism $\phi$ in $\mod\Lambda$.
\item A $\La$-module $X$ has no finite length indecomposable direct summands if and only if $S(X)= 0$ for each simple coherent functor $S$.
\end{enumerate}
\end{lem}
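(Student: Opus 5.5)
The plan is to reduce everything to the category $\fp$ of finitely presented functors on $\mod\La$, via restriction. A coherent functor $F=\Coker\Hom_\La(\phi,-)$ is determined by its restriction to $\mod\La$. Conversely, every finitely presented functor $\mod\La\to\Ab$ extends uniquely to a functor on $\Mod\La$ commuting with filtered colimits, since every module is a filtered colimit of finite dimensional ones. So coherent functors are equivalent to $\fp(\mod\La,\Ab)$. The latter is abelian because $\mod\La$ has cokernels, hence weak kernels exist in $(\mod\La)^\op$ for representables. Kernels and cokernels computed objectwise on $\mod\La$ extend along filtered colimits, which shows the coherent functors form an abelian subcategory of all additive functors, closed under kernels and cokernels.

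For (1), representables $\Hom_\La(C,-)$ with $C$ finitely presented commute with filtered colimits and products, and so do cokernels of maps between them. For (2), write a pure exact sequence as a filtered colimit of split exact sequences. $F$ maps split sequences to split exact sequences, and filtered colimits of abelian groups are exact, so (1) gives exactness.

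For (3), I will use the defect/Auslander formula. Given $\phi\colon C_0\to C_1$ with cokernel $C_2$, the functor $F^\vee=\Ker(\phi\otimes-)$ is coherent, being the kernel of a map of coherent functors $C_0\otimes-\to C_1\otimes-$ on $\Mod\La^\op$. Each $C\otimes-$ is coherent via a projective presentation of $C$. To show the construction is well defined and gives a duality, I would check it on $\mod\La$ using $D$. By \eqref{eq:k-dual}, $D(\phi\otimes Y)\cong\Hom_\La(\phi,DY)$, so for finite dimensional $Y$,
\[ DF^\vee(Y)\cong\Coker\Hom_\La(\phi,DY)=F(DY).\]
This is the second isomorphism in (4) on $\mod\La^\op$, and it extends to all $Y$: write $Y=\colim Y_i$, so $DY=\lim DY_i$, which is an inverse limit rather than a product. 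Here I would instead argue directly: $D$ turns $\Ker(\phi\otimes Y)$ into $\Coker D(\phi\otimes Y)=\Coker\Hom(\phi,DY)$ because $D$ is exact. This gives the isomorphism for every $Y$ with no limit argument. Applying the same formula twice, together with $D$ being a duality on finite dimensional spaces, yields $(F^\vee)^\vee\cong F$ on $\mod\La$, hence everywhere. The first formula in (4) follows by applying the second twice: $F(D^2X)\cong DF^\vee(DX)$, and $F^\vee(DX)\cong D F(X)$ by the symmetric statement for $\La^\op$ with $(F^\vee)^\vee=F$.

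For (5), simple objects of $\fp(\mod\La,\Ab)$ are the tops $S_C=\Hom(C,-)/\mathrm{rad}(C,-)$ for $C$ indecomposable. Since $\La$ is Artin, left almost split maps $\phi\colon C\to C'$ exist. Then $\Coker\Hom(\phi,-)=S_C$, and conversely any simple is such a top. For (6), $S_C(X)\neq0$ iff there is a map $C\to X$ not factoring through $\phi$. This is the standard criterion for $C$ to be a direct summand of $X$, because $\End(C)$ is local and $C$ is endofinite. The converse is immediate. The main obstacle I expect is exactly this splitting claim in (6) for infinite dimensional $X$. I would handle it by showing that a map $f\colon C\to X$ with $S_C(f)\neq0$ is a pure monomorphism, since it is a monomorphism in the functor sense on $\fp$ by simplicity of the socle. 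Pure-injectivity of the finite dimensional module $C$ then makes it split.
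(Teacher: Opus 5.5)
Your proposal is correct, and for item (3) it follows a genuinely different route from the paper. The paper argues intrinsically. It proves an adjunction $\Hom(F,G^\vee)\cong\Hom(G,F^\vee)$, checks it and the unit $F\to(F^\vee)^\vee$ on representables, and extends by exactness. Only afterwards does it derive (4) from $D(\phi\otimes_\La Y)\cong\Hom_\La(\phi,DY)$.

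You reverse the order:
\begin{enumerate}
\item You first prove $DF^\vee(Y)\cong F(DY)$ for all $Y$, using exactness of $D$.
\item You conclude that $F^\vee\cong D\circ F\circ D$ on $\mod\La^\op$, since all values there are finite dimensional.
\item You then read off the duality, independence of the chosen presentation $\phi$, functoriality, and $(F^\vee)^\vee\cong F$ all at once. This uses only that $D$ is a duality on finite dimensional modules, together with the equivalence between coherent functors and finitely presented functors on $\mod\La$.
\end{enumerate}
This is shorter and makes well-definedness of $F\mapsto F^\vee$ transparent, whereas the paper leaves that implicit in the adjunction. The trade-off is that your argument uses that $\La$ is finite dimensional over $k$. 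The paper's argument never uses $D$ and works over an arbitrary ring; it is the Auslander--Gruson--Jensen duality. Your reduction of the abelianness claim to finitely presented functors on $\mod\La$ is essentially the paper's argument in different packaging.

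For (5) and (6) the paper only cites Auslander, so your sketch adds content. However, your justification of the splitting in (6), via ``monomorphism in the functor sense by simplicity of the socle'', leaves three facts unstated:
\begin{enumerate}
\item $C\otimes_\La-$ has a simple socle.
\item This socle is essential and equals $\Ker(\phi\otimes_\La-)$.
\item The class of $f$ in $S_C(X)$ is non-zero exactly when $f\otimes_\La-$ is non-zero on that socle.
\end{enumerate}
All three are true, but each needs an argument.

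A more elementary route is available. Write $X=\colim X_i$ with $X_i$ finite dimensional. Then $f$ factors through some $X_i$, and no induced map $C\to X_j$ with $j\geq i$ factors through $\phi$. By the almost split property each of these maps is a split monomorphism. Hence $f$ is a filtered colimit of split monomorphisms, so it is a pure monomorphism. It splits because $C\cong D^2C$ is pure-injective.
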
  

\begin{proof}
  It is clear that coherent functors are closed under taking
  cokernels. Using the fact that $\mod\Lambda$ has cokernels, one
  shows that coherent functors are also closed under taking
  kernels. This yields the first assertion. Notice that the category
  of coherent functors has enough projective objects. This follows
  from the fact that the representable functors are projective by
  Yoneda's Lemma.
  
(1) This is clear when $F=\Hom_\Lambda(C,-)$, and the general case follows since taking cokernels commutes with filtered colimits and products.

(2) Every pure exact sequence is a filtered colimit of split exact sequences.

(3) We write $F$ as the cokernel of $\Hom_\Lambda(\phi,-)$ for some
$\phi\colon C_0\to C_1$. Then $F^\vee$ is coherent because it is the
kernel of a morphism between coherent functors. Given a coherent
functor $G$ for $\La^\op$ we claim that there is a natural isomorphism
\[\Hom(F,G^\vee)\cong\Hom(G,F^\vee).\]
This is clear when $F=\Hom_\La(C,-)$ and the general case follows
using the exactness of $0\to F^\vee\to \Hom_\La(C_0,-)^\vee\to
\Hom_\La(C_1,-)^\vee$. The unit $F\to
(F^\vee)^\vee$ of the adjunction is an isomorphism. Again, this is
clear when $F$ is representable and the general case follows by exactness.

(4) The adjunction
\[ D(\phi\otimes_\Lambda Y) \cong \Hom_\Lambda(\phi,DY) \] for
$Y\in\Mod\Lambda^\op$ gives the isomorphism $DF^\vee(Y)\cong F(DY)$.
Applying this isomorphism twice and using the isomorphism
$F\iso (F^\vee)^\vee$ yields, for $X\in\Mod\Lambda$, the isomorphism
\[ D^2F(X) \cong F(D^2X). \]

(5) See \cite[II.2]{Au1978}. This uses the existence of left almost split morphisms for modules over a finite dimensional $k$-algebra.

(6) follows from (5).
\end{proof}

\subsection*{Definable subcategories}

A full subcategory $\C\subseteq\Mod\Lambda$ is \emph{definable} if there is a family of coherent functors $(F_i)_{i\in I}$ such that
\[ \C = \{ X \in \Mod\Lambda \mid F_i(X)=0 \text{ for all } i\in I\}. \]
It follows immediately from Lemma~\ref{le:coherent}(4) that $\C$ is then closed under taking double duals. Also, every $X\in\C$ admits a pure embedding $X\to D^2X$ where $D^2X\in\C$ is pure-injective. In particular, $\C$ is closed under pure-injective envelopes.

\begin{exm}\label{ex:definable}
(1) Let $(\T,\F)$ be a torsion pair for $\mod\Lambda$. Then
\[ \bar\F = \{ X\in\Mod\Lambda \mid \Hom_\Lambda(\mathcal T,X) = 0 \} \]
is definable. Specific examples are the torsionfree modules $\bar\P$ and the modules in $\overline{\P\vee\R}$.

(2) For $C\in\mod\Lambda$ there is a natural isomorphism
\[\Hom_\Lambda(-,C)\cong \Hom_\Lambda(-,D^2C)\cong D(-\otimes_\La DC)\] given by adjunction, and therefore
$\{X\in\Mod\Lambda\mid\Hom_\Lambda(X,C)=0\}$ is definable. Choosing
for $C$ the modules in $\mathcal R$ or in $\mathcal P$, we deduce that
the divisible modules and regular modules both form definable
subcategories.

(3) Let $\C\subseteq\mod\Lambda$. Then $\C^\perp$ is definable.

(4) The $\Lambda$-modules having no finite length indecomposable
direct summands form a definable subcategory, see
Lemma~\ref{le:coherent}(6).
\end{exm}

\subsection*{Subgroups of finite definition}

Let $X$ be a $\Lambda$-module. Each morphism $C\to C'$ in $\mod\Lambda$ yields a map $\Hom_\Lambda(C',X)\to\Hom_\Lambda(C,X)$, whose image is called a \emph{subgroup of finite definition} (or H-subgroup, see \cite{zbcn}) of $\Hom_\Lambda(C,X)$. Note that each such subgroup is an $\End_\Lambda(X)$-submodule, and the subgroups of finite definition form a modular lattice $\bfL_C(X)$. Taking $C=\Lambda$ gives the lattice $\bfL(X)$ of subgroups of finite definition of $X$.

On the other hand, the coherent functors form an abelian category by
Lemma~\ref{le:coherent}, so the coherent subfunctors of a coherent
functor $F$ form a modular lattice $\bfL_F$; see \cite[IV,
Proposition~5.3]{St1975}.
Evaluation at $X$ then induces a surjective
lattice homomorphism
\[\bfL_F\twoheadrightarrow\bfL_F(X)\coloneqq\{F'(X)\mid F'\subseteq F\text{ coherent}\}.\]
In particular $\bfL_C(X)=\bfL_F(X)$ for $F=\Hom_\La(C,-)$.

We observe that, for any set $I$, there are natural identifications
$\bfL_C(X^{(I)})=\bfL_C(X)=\bfL_C(X^I)$. Taking a subspace
$U\rightarrowtail X$ to the kernel of $DX\twoheadrightarrow DU$
induces a lattice isomorphism $\bfL(X)^\op\iso\bfL(DX)$. Also, for a
coherent functor $F$ and a coherent subfunctor $G\subseteq F$ the
lattice $\bfL_G(X)$ identifies with the interval $[0, G(X)]$ in
$\bfL_F(X)$, while  $\bfL_{F/G}(X)$ identifies with the interval $[G(X),F(X)]$. 

Chain conditions on subgroups of finite definition provide effective characterisations of $\Sigma$-pure injective and endofinite modules. See \cite{GrusonJensen}, \cite{Z}, \cite[Theorem~3.1 and Proposition~4.1]{CB1992}.

\begin{prop}\label{pr:subgroup-fin-def}
\begin{enumerate}
\item A module is $\Sigma$-pure injective if and only if it has the descending chain condition (dcc) on subgroups of finite definition.
\item A module $X$ is endofinite if and only if it has both acc and dcc on subgroups of finite definition, in which case $\bfL_C(X)$ identifies with the lattice of submodules of $\Hom_\Lambda(C,X)$.
\end{enumerate}
\end{prop}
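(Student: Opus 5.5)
The plan is to reduce both statements to the evaluation homomorphism $\bfL_F\twoheadrightarrow\bfL_F(X)$, and to use the finite generation of $\Lambda$ to pass between $\bfL(X)$ and all the lattices $\bfL_C(X)$. First I would observe that a chain condition on $\bfL(X)=\bfL_\Lambda(X)$ implies the same chain condition on every $\bfL_C(X)$ with $C\in\mod\Lambda$. To see this, choose an epimorphism $\Lambda^n\twoheadrightarrow C$. Then $\Hom_\Lambda(C,-)$ is a coherent subfunctor of $\Hom_\Lambda(\Lambda^n,-)$, so $\bfL_C(X)$ is an interval in $\bfL_{\Lambda^n}(X)$. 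The lattice $\bfL_{\Lambda^n}(X)$ is the lattice of pp-definable subgroups of $X^n$, which has the same chain conditions as $\bfL(X)$. This is the standard fact that the lattice of $n$-place pp formulas has acc/dcc as soon as the $1$-place lattice does. I would prove it by induction on $n$, using the exact sequence $0\to\Hom(\Lambda,-)\to\Hom(\Lambda^{n},-)\to\Hom(\Lambda^{n-1},-)\to 0$ of coherent functors. With this sequence, a chain in the middle lattice is controlled by its intersections with, and images in, the outer terms, exactly as for modules of finite length.

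For (1), the plan is the Gruson--Jensen/Zimmermann argument. Suppose $X$ has dcc on $\bfL(X)$. Since $\bfL_C(X^{(I)})=\bfL_C(X)$, the module $X^{(I)}$ also has dcc on all $\bfL_C$. It then suffices to show that any module $Y$ with dcc on subgroups of finite definition is pure-injective. I would use the criterion that $Y$ is pure-injective precisely when every finitely solvable system of pp-conditions (cosets of subgroups of finite definition) is solvable, i.e.\ $Y$ is algebraically compact. With dcc, any such system can be replaced by a finite subsystem, because the intersections of finitely many members stabilise, and a finite subsystem is solvable by assumption. Conversely, suppose $Y^{(\bbN)}$ is pure-injective and we are given a strictly descending chain $U_1\supsetneq U_2\supsetneq\cdots$ in $\bfL(Y)$. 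Pick $y_i\in U_i\setminus U_{i+1}$ and consider the element of the product $Y^{\bbN}$ with entries $y_i$. Since $Y^{(\bbN)}$ is a pure submodule of $Y^{\bbN}$, it is a direct summand; composing with the projection and checking coordinatewise membership in the $U_j$ gives a contradiction. This is the step I expect to be the main obstacle. The standard route goes through the summation map $Y^{(\bbN)}\to Y$ extended to $Y^{\bbN}$, and the care lies in showing that the extended map respects the pp-subgroups. I would rely on coherent functors preserving products (Lemma~\ref{le:coherent}(1)) to make the coordinatewise evaluation legitimate.

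For (2), I would write $E=\End_\Lambda(X)$. Each subgroup of finite definition is an $E$-submodule, so endofiniteness gives acc and dcc on $\bfL(X)$ immediately. For the converse, suppose $\bfL(X)$ has both chain conditions. Then by the first step every $\bfL_C(X)$ has finite length, and $X$ is $\Sigma$-pure injective by (1). The claim is that every $E$-submodule $V$ of $\Hom_\Lambda(C,X)$ lies in $\bfL_C(X)$; applied with $C=\Lambda$, this shows $X$ has finite length over $E$. Given $v\in\Hom_\Lambda(C,X)$, consider the smallest subgroup of finite definition containing $v$, which exists by dcc. Using pure-injectivity, I would show that this subgroup equals $Ev$: any $w$ in it satisfies every pp-condition that $v$ satisfies, so there is a partial map $v\mapsto w$ that extends to an endomorphism of $X$. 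Finite sums of such cyclic subgroups are again of finite definition, and by acc every $E$-submodule $V$ is such a finite sum. This shows $V\in\bfL_C(X)$ and gives the identification in (2).
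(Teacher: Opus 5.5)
Your outline is sound, and two of its components match the paper. The first is the passage from $\bfL(X)$ to all $\bfL_C(X)$, by building coherent functors from the forgetful functor through subobjects and extensions. The second is the shape of (2): dcc gives a smallest subgroup of finite definition containing $v$, this subgroup is $Ev$, and acc makes every $E$-submodule a finite sum of such.

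The real difference lies in how pure-injectivity is used.
\begin{itemize}
\item For dcc $\Rightarrow$ $\Sigma$-pure injective, you invoke the characterisation of pure-injectives by solvability of finitely solvable systems of cosets of subgroups of finite definition; dcc then collapses each system to a finite subsystem. The paper instead proves directly that a pure exact sequence $0\to X\to Y\to Z\to 0$ splits. Writing $Z=\colim Z_\alpha$, dcc on the lattices $\bfL_{Z_\alpha}(X)$ gives the Mittag-Leffler condition. So $\lim$ keeps the sequences $0\to\Hom_\Lambda(Z_\alpha,X)\to\Hom_\Lambda(Z_\alpha,Y)\to\Hom_\Lambda(Z_\alpha,Z)\to 0$ exact, and the identity of $Z$ lifts.
\item In (2) you cite the fact that, for pure-injective $X$, any $w$ lying in every subgroup of finite definition containing $v$ has the form $ev$ with $e\in E$. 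The paper proves exactly this, as $E\phi=\bigcap_{\alpha\geq\beta}\Hom_\Lambda(X_\alpha,X)\phi_\alpha$. It shows that the pushout of $0\to\Im\phi\to X\to\Coker\phi\to 0$ along the induced map $\Im\phi\to X$ is a filtered colimit of split sequences, hence pure, hence split. Your partial map $v\mapsto w$ is defined on $\Im v$, which is not a pure submodule of $X$. Its extension therefore needs this pushout argument, not a direct application of pure-injectivity.
\end{itemize}
Your route is shorter and makes the compactness supplied by dcc explicit. It does, however, rest on the nontrivial equivalence between pure-injectivity and algebraic compactness, whereas the paper uses only the definition of pure-injectivity.

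The direction you flag as the main obstacle, $\Sigma$-pure injective $\Rightarrow$ dcc, is unfinished. The retraction route you mention first is the paper's, and it closes in a few lines:
\begin{itemize}
\item Let $\pi\colon Y^{\bbN}\to Y^{(\bbN)}$ be a retraction and $f=(y_i)$.
\item Since $\pi f$ has finite support, choose $n$ with $(\pi f)_r=0$ for $r\geq n$.
\item Split $f=f'+f''$, with $f'$ supported in coordinates $\leq n$ and $f''$ in coordinates $>n$. Then $\pi f'=f'$, so $0=(\pi f)_n=y_n+(\pi f'')_n$.
\item Coherent functors commute with products, so $U_{n+1}^{\bbN}$ is a subgroup of finite definition of $Y^{\bbN}$. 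Such subgroups are stable under every endomorphism by naturality, so no extra care is needed here.
\item Hence $(\pi f'')_n\in U_{n+1}$, so $y_n\in U_{n+1}$, a contradiction.
\end{itemize}

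Discard the alternative via the summation map $Y^{(\bbN)}\to Y$ extended to $Y^{\bbN}$. Such an extension exists as soon as $Y$ is pure-injective, and pure-injective modules need not have dcc (the adic modules $\hat S$ are examples), so it cannot yield a contradiction. You would need the summation map $(Y^{(\bbN)})^{(\bbN)}\to Y^{(\bbN)}$, which leads back to the argument above.
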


\begin{proof}
  We begin by observing that the chain conditions hold for $\bfL(X)$
  if and only if they hold for all $\bfL_C(X)$. One direction is
  clear, taking $C=\Lambda$. For the converse observe that any
  coherent functor $G$ is a subquotient of a finite direct sum of
  copies of the forgetful functor $F=\Hom_\La(\La,-)$.
  Thus it follows from the above description of lattices of subgroups
  of finite definition given by extensions of coherent functors that any chain
  condition for $\bfL(X)=\bfL_F(X)$ implies the condition for
  $\bfL_G(X)$. It remains to note that  $\bfL_C(X)=\bfL_G(X)$ for $G=\Hom_\La(C,-)$. 

(1) Assume $X$ has dcc on subgroups of definition. Then so too does $X^{(I)}$, since $\bfL(X^{(I)})=\bfL(X)$, and it is enough to show that $X$ is pure-injective.

Take a pure exact sequence $0\to X\to Y\to Z\to 0$ and write $Z=\colim Z_\alpha$ with $Z_\alpha\in\mod\Lambda$. This yields the inverse system of short exact sequences
\[ 0 \to \Hom_\Lambda(Z_\alpha,X) \to \Hom_\Lambda(Z_\alpha,Y) \to \Hom_\Lambda(Z_\alpha,Z) \to 0. \]
Moreover, the condition on $\bfL_{Z_\alpha}(X)$ shows that the Mittag-Leffler condition holds, so taking limits yields the exact sequence
\[ 0 \to \Hom_\Lambda(Z,X) \to \Hom_\Lambda(Z,Y) \to \Hom_\Lambda(Z,Z) \to 0. \]
It follows that the original sequence splits, and hence that $X$ is pure-injective.

Conversely assume that $X$ is $\Sigma$-pure injective. Then the canonical inclusion $X^{(\bbN)}\to X^\bbN$ admits a left inverse $\pi$. Assume for contradiction that we have a strict chain $\Hom_\Lambda(C,X)=U_0\supset U_1\supset\cdots$ in $\bfL_C(X)$ and for each $i$ take $f_i\in U_i-U_{i+1}$. This yields an element $f=(f_i)\in\Hom_\Lambda(C,X^\bbN)$. Take $n$ such that $(\pi f)_r=0$ for all $r\geq n$, and decompose $f=f'+f''$ having supports $\leq n$ and $>n$ respectively. Then $\pi f=f'+\pi f''$, so $f_n=f'_n=-(\pi f'')_n$.

Now, $f''$ lies in $U_{n+1}^\bbN$, which is a subgroup of finite definition in $\Hom_\Lambda(C,X^\bbN)$, and hence an $\End_\Lambda(X^\bbN)$-submodule. Thus $\pi f''$ must lie in $U_{n+1}^\bbN$, so $f_n=-(\pi f'')_n$ lies in $U_{n+1}$, a contradiction.
\medskip

(2) As all subgroups of finite definition of $X$ are $\End_\Lambda(X)$-submodules, we see that $X$ being endofinite implies $\bfL_C(X)$ has both acc and dcc.

For the converse assume first that $X$ is pure-injective. Write $X=\colim X_\alpha$ with $X_\alpha\in\mod\Lambda$ and associated morphisms $j_\alpha\colon X_\alpha\to X$. Given $\phi\colon C\to X$ with $C\in\mod\Lambda$, we can find $\beta$ such that $\phi=j_\beta\phi_\beta$. We claim that, inside $\Hom_\Lambda(C,X)$, we have
\[ \End_\Lambda(X)\phi = \bigcap_{\alpha\geq\beta}\Hom_\Lambda(X_\beta,X)\phi_\alpha. \]
One inclusion is clear, so suppose $\psi$ lies in the intersection. We factor $\phi=\bar\phi\pi$ and $\phi_\alpha=\bar\phi_\alpha\pi_\alpha$ via their respective images $\bar C$ and $\bar C_\alpha$, yielding commutative diagrams
\[ \begin{tikzcd}
\varepsilon_\alpha \colon &[-20pt] 0 \arrow[r] & \bar C_\alpha \arrow[r,"\bar\phi_\alpha"] \arrow[d,"i_\alpha"] &
X_\alpha \arrow[r] \arrow[d,"j_\alpha"] & Y_\alpha \arrow[r] \arrow[d] & 0\\
\varepsilon \colon &[-20pt] 0 \arrow[r] & \bar C \arrow[r,"\bar\phi"] & X \arrow[r] & Y \arrow[r] & 0
\end{tikzcd} \]
Moreover, $\varepsilon=\colim\varepsilon_\alpha$, which agrees with the colimit of the pushout sequences $\colim i_\alpha\varepsilon_\alpha$.

By assumption we have $\psi=\psi_\alpha\bar\phi_\alpha\pi_\alpha$. Using $\pi=i_\alpha\pi_\alpha$ we see that $\Ker\pi=\colim\Ker\pi_\alpha$, and hence $\psi$ factors as $\tilde\psi\pi$. We deduce that $\psi_\alpha\bar\phi_\alpha=\bar\psi i_\alpha$. Finally, the pushout $\bar\psi\varepsilon$ is the colimit of $\bar\psi i_\alpha\varepsilon_\alpha=\psi_\alpha\bar\phi_\alpha\varepsilon_\alpha$. The latter are all split exact, so $\bar\psi\varepsilon$ is pure, and hence also split as $X$ is pure-injective. Thus $\bar\psi=\gamma\bar\phi$ for some $\gamma\in\End_\Lambda(X)$, and $\psi=\gamma\phi$ factors through $\phi$.

If now $X$ is $\Sigma$-pure injective, then it has dcc on subgroups of finite definition, so each cyclic $\End_\Lambda(X)$-submodule of $\Hom_\Lambda(C,X)$ is a subgroup of finite definition. If we also assume acc, then the same holds for every $\End_\Lambda(X)$-submodule, showing that $X$ is endofinite.
\end{proof}

\begin{rem}\label{re:quot-lattice}
If $X'$ belongs to the definable subcategory generated by $X$, then the canonical map $\bfL_C\twoheadrightarrow\bfL_C(X')$ factors though $\bfL_C\twoheadrightarrow\bfL_C(X)$, so induces a surjective lattice homomorphism $\bfL_C(X)\twoheadrightarrow\bfL_C(X')$. It follows that $\bfL_C(X')$ inherits any chain condition from $\bfL_C(X)$.

In particular this applies to pure submodules $X'\subseteq X$, in which case the morphism can be described explicitly as
\[ \bfL_C(X) \to \bfL_C(X'), \quad F(X) \mapsto F(X')=F(X)\cap\Hom_\Lambda(C,X'). \]
As a consequence, if $X$ is $\Sigma$-pure injective, then so too is $X'$, and the inclusion $X'\subseteq X$ splits.
\end{rem}

\begin{lem}\label{le:end-generic}
Let $X$ be an indecomposable pure-injective module. Then $\End_\Lambda(X)$ is local. If moreover $X$ is endofinite, then $\End_\Lambda(X)$ has nilpotent radical, and this vanishes whenever $\Ext^1_\Lambda(X,X)=0$.
\end{lem}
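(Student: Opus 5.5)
We prove the three assertions in turn; the last needs the hypothesis $\Ext^1_\Lambda(X,X)=0$, since its whole point is to upgrade a nilpotent radical to a zero radical.

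\textbf{Step 1: locality.} Let $E=\End_\Lambda(X)$ and take $f\in E$. We aim to show that $f$ or $1-f$ is invertible, and we work with $\Ker f$.

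Suppose first that $f$ and $1-f$ are both monomorphisms. Then we would like to see that one of them is a pure monomorphism. One way is to consider the pure-injective hull of the image. A cleaner route is the standard argument via the functor category. The functor $\Hom_\Lambda(-,X)$ restricted to $\mod\Lambda$ is an injective object of $\Mod(\mod\Lambda)^{\op}$, because $X$ is pure-injective. Moreover $E$ is identified with the endomorphism ring of this injective functor. That functor is indecomposable, since $X$ is, and an indecomposable injective object in a Grothendieck category has local endomorphism ring. We shall quote this standard fact (Gruson--Jensen) rather than reprove it.

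\textbf{Step 2: nilpotent radical.} Now let $X$ be endofinite. Since $E$ is local, $J=\operatorname{rad}E$ consists of the non-invertible endomorphisms. Consider the chain of $E$-submodules $J^nX$, or more precisely the chain of $E$-submodules $J^n\Hom_\Lambda(C,X)$ for $C=\Lambda$, noting that $\Hom_\Lambda(\Lambda,X)=X$.

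By endofiniteness $X$ has finite length over $E$, so $J^nX=J^{n+1}X$ for some $n$. By Nakayama's lemma for finite length modules over a local ring we have $JM\neq M$ unless $M=0$; since $J^nX$ has finite length and $J\cdot J^nX=J^nX$, we get $J^nX=0$. Hence $J^n=0$, because endomorphisms are determined by their action on $X$.

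\textbf{Step 3: $J=0$ when $\Ext^1_\Lambda(X,X)=0$.} Here we use that $\Lambda$ is hereditary. Let $f\in J$; we want $f=0$. Set $I=\Im f$ and $K=\Ker f$, so that we have an exact sequence $0\to K\to X\to I\to 0$ with $I\subseteq X$.

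Since $\Lambda$ is hereditary, $\Ext^1(X,-)$ is right exact. Applying it to $X\to X/I$ gives $\Ext^1(X,X/I)=0$, and applying it to $X\twoheadrightarrow I$ gives $\Ext^1(X,I)=0$.

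The key step is to use $\Ext^1(I,X)$. Applying $\Hom(-,X)$ to $0\to K\to X\to I\to 0$, and using $\Ext^2=0$, we get $\Ext^1(X,X)\to\Ext^1(K,X)\to 0$, hence $\Ext^1(K,X)=0$. This is Happel--Ringel-type reasoning: for hereditary algebras, if $\Ext^1(X,X)=0$ then every nonzero map between indecomposable modules $X\to X$ is mono or epi.

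We follow that argument. Consider the sequence $0\to K\to X\to I\to 0$ together with the inclusion $I\to X$ with cokernel $Q$. Apply $\Hom(-,K)$ to $0\to I\to X\to Q\to 0$. From $\Ext^1(X,K)$ being a quotient of $\Ext^1(X,X)=0$, we get a surjection $\Hom(I,K)\to\Ext^1(Q,K)$, that is, exactness of $\Hom(X,K)\to\Hom(I,K)\to\Ext^1(Q,K)\to 0$.

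Now pull back the extension $0\to K\to X\to I\to 0$ along $I$; this class lies in $\Ext^1(I,K)$. It extends to $\Ext^1(X,K)=0$ because $\Ext^1(X,K)\to\Ext^1(I,K)$ is surjective, again by heredity. Therefore the class vanishes, so the sequence splits: $X\cong K\oplus I$.

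Indecomposability of $X$ then gives $K=0$ or $I=0$. If $I=0$ then $f=0$. If $K=0$ then $f$ is a monomorphism, and so is every power $f^n$; since $f^n=0$ by Step 2, we get $X=0$ unless $f=0$.

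The main obstacle is Step 1: establishing locality for infinite-dimensional pure-injectives. We handle it by quoting the functor-category argument, which identifies $X$ with the indecomposable injective functor $\Hom_\Lambda(-,X)|_{\mod\Lambda}$.
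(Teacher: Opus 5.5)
Your Step 2 is correct and matches the paper's Nakayama argument. Steps 1 and 3, however, each rest on a false claim, and the one in Step 3 is exactly where the key idea of the proof is missing.

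In Step 3 you assert that $\Ext^1_\Lambda(X,K)$ is a quotient of $\Ext^1_\Lambda(X,X)=0$. It is not: $K=\Ker f$ is a submodule of $X$, not a quotient, so right exactness of $\Ext^1_\Lambda(X,-)$ says nothing about $\Ext^1_\Lambda(X,K)$. Applying $\Hom_\Lambda(X,-)$ to $\eta\colon 0\to K\to X\xrightarrow{p} I\to 0$ only gives a surjection $\Hom_\Lambda(X,I)\twoheadrightarrow\Ext^1_\Lambda(X,K)$. The vanishing of $\Ext^1_\Lambda(X,K)$ is known only a posteriori, once $f=0$ has been proved.

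This is precisely the point where the finite length Happel--Ringel argument uses Krull--Schmidt and a dimension count, neither of which is available here. The missing idea is to use the nilpotence from Step 2 at this point, not merely to dispose of the case $K=0$. Compose with the surjection $\Ext^1_\Lambda(X,K)\twoheadrightarrow\Ext^1_\Lambda(I,K)$ induced by $i\colon I\rightarrowtail X$ (this uses heredity). You get $q\colon X\to I$ with $\eta=(qi)^*\eta$. Now suppose $f\neq 0$.
\begin{itemize}
\item Being nilpotent, $f$ is neither injective nor surjective.
\item So $iq$, whose image lies in $I\neq X$, is not invertible; hence it lies in $J$ and is nilpotent.
\item Then $qi$ is nilpotent too, since $(qi)^{m+1}=q(iq)^m i$.
\end{itemize}
Iterating $\eta=(qi)^*\eta$ gives $\eta=0$. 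So $X\cong K\oplus I$ with $K\neq 0\neq I$, contradicting indecomposability. The paper runs the dual version with the cokernel sequence $\xi\colon 0\to I\to X\to \Coker f\to 0$. It uses $\Hom_\Lambda(I,X)\twoheadrightarrow\Ext^1_\Lambda(\Coker f,X)\twoheadrightarrow\Ext^1_\Lambda(\Coker f,I)$ to find $j$ with $\xi=(pj)_*\xi$ and $jp$ nilpotent.

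In Step 1 you use the wrong functor. The restriction of $\Hom_\Lambda(-,X)$ to $\mod\Lambda$ is an injective contravariant functor only when $X$ is an injective module. Indeed, for a monomorphism $a\colon A\rightarrowtail B$ in $\mod\Lambda$, the map $\Hom_\Lambda(-,a)$ is a monomorphism of functors. By Yoneda, injectivity would make $\Hom_\Lambda(B,X)\to\Hom_\Lambda(A,X)$ surjective, and Baer's criterion then makes $X$ injective. So your claim already fails for every finite dimensional non-injective $X$, even though such $X$ is pure-injective; this restricted Hom functor is the flat functor attached to $X$.

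The Gruson--Jensen fact you need, which is what the paper uses, concerns $T_X=X\otimes_\Lambda-$. The assignment $X\mapsto T_X$ is fully faithful from $\Mod\Lambda$ to $\Add(\mod\Lambda^\op,\Ab)$, and $X$ is pure-injective if and only if $T_X$ is injective. With that substitution the rest of your Step 1 goes through.
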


\begin{proof}
Set $E\coloneqq\End_\Lambda(X)$. The assignment sending $X$ to $T_X=X\otimes_\Lambda-$ determines a functor from $\Mod\Lambda$ to the category $\Add(\mod\Lambda^\op,\Ab)$ of additive functors. This is fully faithful and sends (indecomposable) pure-injective modules to (indecomposable) injective functors. Thus $E$ is isomorphic to $\End(T_X)$, which is local when $X$ is indecomposable pure-injective.

Now suppose that $X$ is indecomposable and endofinite. Then $E$ has nilpotent radical by Nakayama's Lemma. Assume further that $X$ has no self-extensions. Given a non-injective $f\in E$, we can factor it via its image $I$ as $f=ip$. Set $C\coloneqq\Coker f$ and $\xi\in\Ext^1_\Lambda(C,I)$ the corresponding extension. As $\Lambda$ is hereditary and $X$ has no self-extensions, there are surjections $\Hom(I,X)\twoheadrightarrow\Ext^1_\Lambda(C,X)\twoheadrightarrow\Ext^1_\Lambda(C,I)$. In particular, there exists $j\colon I\to X$ such that $\xi=pj\xi$. Now $jp\in E$ is again non-injective, so nilpotent, and hence $\xi=0$. As $X$ is indecomposable, $I=0$. An analogous argument holds if $f$ is not surjective.
\end{proof}

\begin{cor}\label{co:decomposition}
Every $\Sigma$-pure injective $\Lambda$-module $X$ admits an essentially unique decomposition $X=\bigoplus_i X_i$ into indecomposable modules with local endomorphism rings.
\end{cor}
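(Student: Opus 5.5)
The plan has two parts: first produce a decomposition of $X$ into indecomposables, then invoke a Krull--Remak--Schmidt--Azumaya type uniqueness result. By Lemma~\ref{le:end-generic}, any indecomposable pure-injective module has a local endomorphism ring. Since a direct summand of a $\Sigma$-pure injective module is again $\Sigma$-pure injective, every indecomposable summand of $X$ is pure-injective. So once $X$ is written as a direct sum of indecomposables, all summands have local endomorphism rings. Essential uniqueness then follows from Azumaya's theorem (Krull--Remak--Schmidt--Azumaya): any two decompositions into modules with local endomorphism rings are isomorphic.

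The substantive step is existence, and I would use the dcc on subgroups of finite definition from Proposition~\ref{pr:subgroup-fin-def}(1).

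First I show that every nonzero direct summand $Y$ of $X$ has an indecomposable direct summand. Choose $0\ne x\in Y$, i.e.\ a nonzero morphism $\Lambda\to Y$. Consider the subgroups of finite definition $U\in\bfL(Y)$, which by Remark~\ref{re:quot-lattice} satisfy dcc, such that $x\in U$. Also consider decompositions $Y=Y'\oplus Y''$ whose projection of $x$ to $Y'$ is nonzero. The cleaner route is Crawley-Boevey's: pick a summand $Y'$ of $Y$ that is minimal among those for which the image of $x$ is nonzero. Here "minimal" is measured via dcc on the subgroups $\bfL(Y')$, viewed as pure submodules, using that $\Hom(\Lambda,Y')=Y'$ embeds via the lattice maps of Remark~\ref{re:quot-lattice}. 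The subgroup of finite definition generated by the image of $x$, namely the smallest member of $\bfL(Y')$ containing it (this exists by dcc), strictly decreases if $Y'$ splits further while keeping the component nonzero. Concretely, I would argue that if $Y'=A\oplus B$ with both the $A$- and $B$-components of $x$ nonzero, then one passes to $A$. Dcc on the pp-types, i.e.\ on the intersections of subgroups containing $x$, forces termination after finitely many steps, yielding an indecomposable summand.

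Next I run a Zorn's lemma argument. Consider families $(X_i)_{i\in I}$ of indecomposable submodules such that the sum $\sum X_i$ is direct and is a direct summand of $X$. The key point is that the direct sum $\bigoplus X_i$ is a pure submodule of $X$ whenever each finite partial sum is a summand, since purity is preserved under directed unions. Remark~\ref{re:quot-lattice} then says pure submodules of $\Sigma$-pure injective modules are summands. So the union of a chain of such families is again admissible, and Zorn gives a maximal family. If its complement were nonzero, the previous step would give an indecomposable summand of the complement, contradicting maximality. Hence $X=\bigoplus X_i$.

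The main obstacle is the existence of an indecomposable summand. A simpler approach I would try first is to use the functor $T_X$ into $\Add(\mod\Lambda^\op,\Ab)$ from the proof of Lemma~\ref{le:end-generic}. $\Sigma$-pure injectivity makes $T_X$ a $\Sigma$-injective object in a locally coherent Grothendieck category, and such objects decompose into indecomposable injectives by the standard Matlis/Faith argument: a $\Sigma$-injective object has acc on annihilator-type subobjects, so every nonzero subobject contains a uniform one, whose injective envelope is an indecomposable summand. Transporting back along the fully faithful functor gives the decomposition of $X$.
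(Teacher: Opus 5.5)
Your outer framework is the paper's: local endomorphism rings from Lemma~\ref{le:end-generic}, essential uniqueness by Krull--Remak--Schmidt--Azumaya, and Zorn's Lemma over direct families of indecomposable summands. Such a sum is pure in $X$, hence a summand by Remark~\ref{re:quot-lattice}. The crux is producing one indecomposable summand, and your first argument for that does not work.

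Splitting $Y'=A\oplus B$ with both components of $x$ nonzero and passing to $A$ need not terminate, and no invariant of $x$ decreases. Take any indecomposable $M$ (finite dimensional, or $M=Q$), $0\neq m\in M$ and $X=M^{(\bbN)}$. Let $x_n=(m,\dots,m,0,0,\dots)$ with $n+1$ entries $m$, and $Y_n=\Delta_{n+1}\oplus M^{(\{n+2,n+3,\dots\})}$ with $\Delta_{n+1}\subseteq M^{n+1}$ the diagonal. Then $Y_n=Y_{n+1}\oplus B_{n+1}$, where $B_{n+1}$ is the $(n+2)$-nd coordinate copy of $M$. The two components of $x_n$ are $x_{n+1}$ and $(0,\dots,0,-m,0,\dots)$, both nonzero. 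So your procedure can run forever, and moreover $\bigcap_nY_n=0$.

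All $x_n$ lie in one $\operatorname{Aut}_\Lambda(X)$-orbit and each $Y_n$ is pure in $X$. Hence the pp-type of $x_n$ in $Y_n$, i.e.\ the set of coherent subfunctors $F$ of the forgetful functor with $x_n\in F(Y_n)=F(X)\cap Y_n$, is independent of $n$. So dcc has nothing to act on. The missing idea is to use dcc \emph{before} choosing the element. Take $F$ minimal with $F(X)\neq0$ and $x\in F(X)$ nonzero; then the injective envelope of the image of the corresponding morphism $F^\vee\to T_X$ is indecomposable. That is precisely Lemma~\ref{le:simply-reflexive}, which the paper's proof invokes.

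The functor-category route you mention at the end is sound and genuinely different from the paper's. Since $T_{X^{(I)}}\cong T_X^{(I)}$, the object $T_X$ is $\Sigma$-injective in $\A$. Faith's argument then goes through with elements replaced by morphisms from $G=\Hom_{\Lambda^\op}(C,-)$. $\Sigma$-injectivity together with finite generation of $G$ gives acc on kernels of morphisms $G\to T_X$. If $g\neq0$ has maximal kernel, then $E(\Im g)$ is indecomposable. Indeed, if $E(\Im g)=E_1\oplus E_2$ with $E_1,E_2\neq0$, the composite of $g$ with the projection onto $E_1$ is nonzero (by essentiality) with the same kernel (by maximality). This forces $\Im g\cap E_2=0$, contradicting essentiality. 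Evaluating the resulting decomposition of $T_X$ at $\Lambda$ decomposes $X$.

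This yields the whole decomposition at once inside a Grothendieck category, with no coherent functors, duality or simple pairs, but it uses the full $\Sigma$-condition. The paper's Lemma~\ref{le:simply-reflexive} only needs a minimal (or, via $D$, maximal) nonzero subgroup of finite definition. So it also applies to pure-injectives with acc, which the paper exploits for torsionfree modules such as the adic modules in Theorem~\ref{th:classification}.
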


\begin{proof}
The argument is a straightforward application of Zorn's Lemma. First observe that there is an indecomposable direct summand when $X\neq 0$; see Lemma~\ref{le:simply-reflexive}. Now choose a maximal family of indecomposable pure-injective submodules $(X_i)_{i\in I}$ such that $X'=\sum_iX_i$ is a pure submodule of $X$ and the sum is direct. Then $X'$ is a direct summand by Remark~\ref{re:quot-lattice}. Thus $X'=X$ by our first observation.
\end{proof}

\begin{cor}\label{co:endofinite}
Let $X$ be an endofinite $\Lambda$-module. Then all direct sums of indecomposable direct summands of $X$ form a definable subcategory. When $X$ is of finite length this equals $\bar\X$ where $\X\subseteq\mod\La$ is the additive closure of $X$.
\end{cor}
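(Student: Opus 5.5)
Let $\mathcal{A}$ be the class of all direct sums of indecomposable direct summands of $X$. By Corollary~\ref{co:decomposition}, $X$ decomposes as $X=\bigoplus_{j\in J}X_j$ with each $X_j$ indecomposable with local endomorphism ring, and this decomposition is essentially unique. Let $\X_1,\dots,\X_n$ be the isomorphism classes of indecomposable summands; there are only finitely many, because $X$ has finite length over $\End_\Lambda(X)$. Indeed, each summand $X_j$ contributes nonzero length to $\Hom_\Lambda(\Lambda,X)$ viewed as an $\End_\Lambda(X)$-module, via the idempotent for $X_j$, and these contributions are additive. Set $Y=X_1\oplus\dots\oplus X_n$, so $\mathcal{A}=\Add Y$. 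The plan is to show that the definable subcategory $\langle Y\rangle$ generated by $Y$ equals $\Add Y$.

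The inclusion $\Add Y\subseteq\langle Y\rangle$ holds because definable subcategories are closed under direct sums and summands; coherent functors commute with direct sums by Lemma~\ref{le:coherent}(1). For the converse, take $Z\in\langle Y\rangle$. By Remark~\ref{re:quot-lattice}, $\bfL_C(Z)$ is a quotient of $\bfL_C(Y)$, so $Z$ inherits acc and dcc on subgroups of finite definition, and $Z$ is endofinite by Proposition~\ref{pr:subgroup-fin-def}. In particular $Z$ is $\Sigma$-pure injective, and Corollary~\ref{co:decomposition} writes $Z=\bigoplus Z_k$ with each $Z_k$ indecomposable endofinite. It then suffices to show that every indecomposable $Z'$ in $\langle Y\rangle$ is isomorphic to some $X_i$.

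This is the main obstacle. I would use that $\langle Y\rangle$ is the closure of $Y$ under products, pure submodules and pure quotients, or equivalently argue with coherent functors. Products $Y^I$ are again endofinite with $\bfL_C(Y^I)=\bfL_C(Y)$. Since $Y$ is $\Sigma$-pure injective, $Y^I$ is a pure-injective module in the definable class and is a summand of $Y^{(I')}$; this uses the known fact that for $\Sigma$-pure injective $Y$ the product $Y^I$ lies in $\Add Y$, which I would derive from the dcc argument of Proposition~\ref{pr:subgroup-fin-def}, via $Y^{(I)}\to Y^I$ being a pure monomorphism between $\Sigma$-pure-injective modules and hence split. Pure submodules of modules in $\Add Y$ split off by Remark~\ref{re:quot-lattice}, and by the essential uniqueness in Corollary~\ref{co:decomposition} (Krull--Schmidt--Azumaya) their indecomposable summands are among the $X_i$. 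For pure quotients, the pure exact sequence $0\to Z''\to W\to Z'\to 0$ has pure-injective kernel, so it splits, and $Z'$ becomes a summand of $W\in\Add Y$. Every object of $\langle Y\rangle$ is obtained this way as a pure quotient of a pure submodule of a product, so $\langle Y\rangle=\Add Y$.

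For $X$ of finite length, $\X$ is the additive closure of $X$ in $\mod\Lambda$. Each $\colim$ of objects in $\X$ lies in $\langle X\rangle=\Add X$, since definable subcategories are closed under filtered colimits by Lemma~\ref{le:coherent}(1); hence $\bar\X\subseteq\Add X$. Conversely, an arbitrary direct sum of finite-length modules in $\X$ is a filtered colimit of finite subsums, so $\Add X\subseteq\bar\X$.
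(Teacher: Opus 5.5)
Your reduction is sound up to the decisive step. Every object of $\langle Y\rangle$ is endofinite and decomposes. Closure of $\Add Y$ under pure submodules and pure quotients also works, because objects of $\Add Y$ are $\Sigma$-pure injective, so pure subobjects split off and pure exact sequences starting in them split. But everything then rests on $Y^I\in\Add Y$. This is exactly where endofiniteness, and not merely $\Sigma$-pure injectivity, must be used, and your justification does not supply it.

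The ``known fact'' that $Y^I\in\Add Y$ for every $\Sigma$-pure injective $Y$ is false. The Pr\"ufer module $\bar S$ is $\Sigma$-pure injective by Lemma~\ref{le:prufer}. Yet for infinite $I$ the module $\bar S^I$ has the non-zero torsionfree summand $\frf(\bar S^I)$ by Lemma~\ref{le:generic-construction2}, whereas every module in $\Add\bar S$ is torsion.

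Your derivation also runs the wrong way. Splitting the pure monomorphism $Y^{(I)}\to Y^I$ gives $Y^I\cong Y^{(I)}\oplus W$, which exhibits $Y^{(I)}$ as a summand of $Y^I$, not the converse. Showing that the indecomposable summands of the endofinite module $W\in\langle Y\rangle$ are among the $X_i$ is precisely your ``main obstacle'' again, so the argument is circular at its crucial point.

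One repair is the character argument of Theorem~\ref{th:endofinite-definable}. Since $\bfL_C(Y^{(I)})=\bfL_C(Y)=\bfL_C(Y^I)$, and for endofinite modules these are the full lattices of $\End$-submodules (Proposition~\ref{pr:subgroup-fin-def}), the characters $\chi_Z(C)=\ell_{\End_\Lambda(Z)}\Hom_\Lambda(C,Z)$ agree for $Z=Y^{(I)},Y,Y^I$.

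Suppose $W$ had an indecomposable summand $V$ not isomorphic to any $X_i$. Then $Y^{(I)}\oplus V$ is a summand of $Y^I$. Since $\End_\Lambda(V)$ is local with nilpotent radical and $V$ is not a summand of $Y^{(I)}$, all maps between $Y^{(I)}$ and $V$ are radical, and so $\chi_{Y^{(I)}\oplus V}=\chi_Y+\chi_V$. This exceeds $\chi_{Y^I}$, while being a summand forces $\chi_{Y^{(I)}\oplus V}\le\chi_{Y^I}$, a contradiction.

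The same additivity, for pairwise non-isomorphic summands only, is what makes your finiteness claim for isomorphism classes precise. Lengths do not add for copies of one summand, since $\chi_{X^{(I)}}=\chi_X$.

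The paper's own proof avoids the description of $\langle X\rangle$ by products, pure submodules and pure quotients altogether. That description is an external theorem here, since definability is defined via coherent functors. Instead, the paper observes that each module of $\langle X\rangle$ is endofinite and decomposes. It then matches each indecomposable summand with a summand of $X$ via Lemma~\ref{le:simply-reflexive-endofinite}. That lemma uses that $\Coh(\Lambda)/\C_X$ is a length category with finitely many simple objects, whose injective envelopes in the functor category come from indecomposable summands of $X$.
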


\begin{proof}
See \cite[Theorem~4.2]{KS1998} or \cite[Corollary 13.1.13]{Kr2022}. By Proposition~\ref{pr:subgroup-fin-def} and Remark~\ref{re:quot-lattice} each module in the definable subcategory $\C$ generated by $X$ is endofinite, so decomposes into a direct sum of indecomposables by Corollary~\ref{co:decomposition}. Each such indecomposable direct summand is then isomorphic to a direct summand of $X$ by Lemma~\ref{le:simply-reflexive-endofinite}. This yields the first assertion.

Now let $X\in\mod\Lambda$ and let $\X$ denote its additive closure. From the first part we see that $\bar\X$ contains $\C$; on the other hand, every definable subcategory is closed under filtered colimits, so $\C$ contains $\bar\X$.
\end{proof}

\section{Torsion regular and divisible modules}\label{se:tor-reg}

In this section we look at torsion regular and divisible
$\La$-modules. In particular we discuss when these modules are
pure-injective. As a consequence, we see that there is a unique
generic $\La$-module up to isomorphism. In fact, we show that the
torsionfree divisible modules are precisely the direct sums of copies
of the generic module.

\subsection*{Torsion regular modules}

The category $\R$ is an abelian subcategory of $\mod\Lambda$ which is
furthermore \emph{uniserial}, so each indecomposable object admits a
unique composition series in $\R$. Moreover, the ext-quiver of $\R$ is
a disjoint union of (infinitely many) oriented cycles. Writing
$\mathbb X$ for the indexing set of the cycles in the ext-quiver of
$\R$, we have that \[\R=\coprod_{x\in\mathbb X}\R_x\] meaning that
every module in $\R$ is a direct sum $M=\bigoplus_xM_x$ of modules
$M_x\in\R_x$ such that $M_x=0$ for almost all $x$ and
$\Hom_\La(M_x,M_y)=0$ for all $x\neq y$.

\begin{lem}
The category of torsion regular
modules equals the completion $\bar\R$.
\end{lem}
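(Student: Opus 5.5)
We must show: a module $X$ is torsion and regular (i.e.\ $X\in\overline{\R\vee\I}$ with $\Hom_\Lambda(X,\P)=0=\Hom_\Lambda(\I,X)$) if and only if $X\in\bar\R$. The plan is to use the description $\overline{\R\vee\I}$ of torsion modules together with the maximal preinjective submodule $\fri(X)$ from Proposition~\ref{pr:preinj}.

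\emph{Modules in $\bar\R$ are torsion regular.} Let $X=\colim_i X_i$ with $X_i\in\R$. Since $\R\subseteq\R\vee\I$, clearly $X$ is torsion. Next, $\Hom_\Lambda(\I,X)=\colim_i\Hom_\Lambda(\I,X_i)=0$, because finite dimensional modules are finitely presented and there are no nonzero maps from preinjectives to regulars. For $\Hom_\Lambda(X,\P)$, I would use that $\bar\R\subseteq\overline{\R\vee\I}$ and that $(\overline{\R\vee\I},\bar\P)$ is a torsion pair, so $\Hom_\Lambda(X,\bar\P)=0$. Since $\P\subseteq\bar\P$, we get $\Hom_\Lambda(X,\P)=0$.

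\emph{Torsion regular modules lie in $\bar\R$.} Let $X$ be torsion with $\Hom_\Lambda(\I,X)=0$. The key point is that $(\I,\P\vee\R)$ is a torsion pair in $\mod\Lambda$, so by the first lemma on torsion pairs the class $\overline{\P\vee\R}$ equals $\{Y\mid\Hom_\Lambda(\I,Y)=0\}$. Hence $X\in\overline{\P\vee\R}$. I would then write $X=\colim_i X_i$ with $X_i\in\P\vee\R$, and consider the images $X_i'$ of $X_i$ in $X$. Each $X_i'$ is a finite dimensional submodule of $X$, so $\Hom_\Lambda(\I,X_i')=0$ and $X_i'\in\P\vee\R$. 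Moreover $X=\colim X_i'$ is a directed union.

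Now I decompose each term as $X_i'=P_i\oplus R_i$ with $P_i\in\P$ and $R_i\in\R$. Since $X$ is torsion, $\Hom_\Lambda(X,\bar\P)=0$, and I want to conclude from this that $X$ is a filtered colimit of the $R_i$. The cleanest route I see is to replace $X_i'$ by $\frt(X_i')$, where $\frt$ is the torsion part for the split pair $(\R\vee\I,\P)$ in $\mod\Lambda$. Here $\frt(X_i')=R_i$, and this construction is functorial, so the $R_i$ form a directed system of submodules. Its union $X''=\colim R_i$ is the torsion part of $X$ computed as in the proof of the torsion pair lemma. Since $X$ is torsion, $\frt(X)=X$, so $X=\colim R_i\in\bar\R$.

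\emph{Main obstacle.} The delicate step is checking that the colimit of torsion parts computes $\frt(X)$. The torsion pair lemma gives exactly this: taking filtered colimits of the canonical sequences $0\to\frt(X_i')\to X_i'\to\frf(X_i')\to0$ produces the torsion sequence for $X$, by uniqueness of torsion sequences. Since $\frf(X)=0$, this forces $X=\colim R_i$.

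In fact this argument only uses $\Hom_\Lambda(\I,X)=0$ and that $X$ is torsion. The condition $\Hom_\Lambda(X,\P)=0$ is automatic for torsion modules.
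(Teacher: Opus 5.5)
Your proof is correct. It runs the paper's argument in the opposite order, so a brief comparison is in order.

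The paper starts from torsionness. It takes the maximal preinjective submodule $\fri(M)$, i.e.\ the torsion part of $M$ for the pair $(\bar\I,\overline{\P\vee\R})$, and asserts without further comment that $M/\fri(M)\in\bar\R$. It then uses regularity, $\Hom_\Lambda(\I,M)=0$, together with Proposition~\ref{pr:preinj} to get $\fri(M)=0$.

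You use the two hypotheses the other way round:
\begin{enumerate}
\item $\Hom_\Lambda(\I,X)=0$ places $X$ and all its finite dimensional submodules in $\P\vee\R$.
\item The torsion radical of $(\overline{\R\vee\I},\bar\P)$ strips off the preprojective parts.
\item Torsionness then forces $\frf(X)=0$.
\end{enumerate}

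Both arguments rest on the same mechanism: the torsion sequence of a filtered colimit is the filtered colimit of the torsion sequences in $\mod\Lambda$. The paper's unproved claim $M/\fri(M)\in\bar\R$ is exactly this mechanism applied to a presentation of $M$ by modules in $\R\vee\I$.

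Your version spells this step out and does not need Proposition~\ref{pr:preinj}. The paper's version is shorter once $\fri$ is available.

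One small simplification is possible in your last step. The projection $X\to X/X''$ is a morphism from a torsion module to a module in $\bar\P$, so it vanishes by the torsion pair lemma. This gives $X=X''$ directly, with no appeal to uniqueness of torsion sequences.
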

\begin{proof}
  It is clear that all modules in $\bar\R$ are torsion
  regular. Conversely, let $M$ be torsion regular. As $M$ is torsion
  the quotient $M/\fri(M)$ belongs to $\bar\R$. On the other hand,
  $\fri(M)=0$ by Proposition~\ref{pr:preinj} since $M$ is regular.
\end{proof}

The category $\bar\R$ is a Grothendieck category since  $\R$ is abelian,
and we remark that $\bar\R$ is a proper subcategory
of all regular modules.  In fact, as we will see, the generic module is
regular but not in $\bar\R$.

For each simple object $S\in\R$ there is a chain of monomorphisms
\[ S = S_1 \rightarrowtail S_2 \rightarrowtail S_3 \rightarrowtail
  \cdots \] in $\R$ such that $S_n$ is indecomposable and of length
$n$ in $\R$. The \emph{Pr\"ufer module} with regular socle $S$ is by
definition
\[ \bar S := \colim_{n\geq 1} S_n. \] An application of Baer's
criterion shows that $\bar S$ is an injective object in $\bar\R$, so
an injective envelope of $S$, and therefore $\End_\Lambda(\bar S)$ is
local.

\begin{lem}\label{le:torsion-indec}
  Every non-zero torsion module admits an indecomposable direct
  summand, which is either of finite length or a Pr\"ufer module.
\end{lem}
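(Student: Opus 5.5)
Let $X$ be a non-zero torsion module, so $X\in\overline{\R\vee\I}$. I would split into two cases according to whether the maximal preinjective submodule $\fri(X)$ of Proposition~\ref{pr:preinj} is non-zero.

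\emph{Case $\fri(X)\neq0$.} By Proposition~\ref{pr:preinj}, $\fri(X)$ is a direct sum of indecomposable direct summands of $X$ lying in $\I$. Hence $X$ has an indecomposable direct summand of finite length, and we are done.

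\emph{Case $\fri(X)=0$.} I would first argue that $X$ is torsion regular, so $X\in\bar\R$ by the preceding lemma. Since $X$ is torsion, its image in $\bar\P$ vanishes, so $\Hom_\La(X,\P)=0$. Also $\Hom_\La(\I,X)=0$: any map from a preinjective module has preinjective image, because $\I$ is closed under quotients (it is a torsion class). That image would lie in $\fri(X)=0$. Thus $X\in\bar\R$. Now $\R=\coprod_x\R_x$, so $X=\bigoplus_x X_x$ with $X_x\in\bar\R_x$. Alternatively, I simply pick a finite length regular submodule: $X$ is a filtered colimit of objects of $\R$ and is non-zero, so it contains a simple regular object $S\rightarrowtail X$ (take the image of some $C\to X$ with $C\in\R$; the image is a quotient of a regular module lying in $\bar\R$, hence in $\R$ since $\R$ is abelian and closed under images in $\bar\R$; then take a simple regular subobject).

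Next I would use injectivity of the Pr\"ufer module $\bar S$ in the Grothendieck category $\bar\R$. The inclusion $S\to\bar S$ extends along $S\rightarrowtail X$ to a map $g\colon X\to\bar S$. Consider the submodules $U_n\subseteq X$ obtained by extending $S$ as far as possible along the chain $S_1\rightarrowtail S_2\rightarrowtail\cdots$. Either some $S_n\rightarrowtail X$ can be chosen such that $g$ restricted to it is not injective, or the extension stabilises. The clean dichotomy is as follows.

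\emph{Subcase (a).} Suppose some $S_n$ embeds in $X$ and the composite $S_n\to X\to S_n$ splits the inclusion. Then $S_n$ is a finite length direct summand. Since each $S_n$ is $\Ext$-injective in the uniserial tube only up to bounded length, I would find this by choosing $n$ maximal with $S_n\rightarrowtail X$ having image that is \emph{pure}. Pure finite length submodules split because finite dimensional modules are endofinite, hence pure-injective.

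\emph{Subcase (b).} Otherwise the embeddings extend indefinitely, yielding $\bar S\rightarrowtail X$. Since $\bar S$ is injective in $\bar\R$ and $X\in\bar\R$, this monomorphism splits, giving a Pr\"ufer direct summand.

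The main obstacle is making this dichotomy precise. The key tool is a height argument in the uniserial tube. Given $S\subseteq X$, consider the socle-embeddings $S_n\rightarrowtail X$ extending $S\rightarrowtail X$ with $S_n$ in the ray starting at $S$. If these exist for all $n$ compatibly, we get $\bar S\subseteq X$. Compatibility is obtained via Zorn, or by using the map to the injective hull $\bar S$ and lifting. If they stop at some maximal $n$, I would show $S_n\rightarrowtail X$ is pure. Purity means $\Hom(C,-)$-lifting for regular $C$ in the same tube, which reduces to $\Hom$ from modules on the coray/ray, checked against maximality. I would then conclude that it splits. I expect to choose $S$ and the ray carefully, possibly replacing $S$ by another simple in the same tube, so that the maximality really yields purity.
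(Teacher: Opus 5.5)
Your reduction is the same as the paper's. The case $\fri(X)\neq0$ is settled by Proposition~\ref{pr:preinj}, and for $\fri(X)=0$ your argument that $X$ is torsion regular, hence in $\bar\R$, is fine. At that point the paper simply invokes the uniseriality of $\R$ via \cite[Theorem~13.1.28]{Kr2022}. You instead try to prove that structure result by hand, and the argument breaks in subcase~(b).

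It is \emph{not} true that if a fixed simple $S\subseteq X$ lies in a copy of $S_n$ for every $n$, then $\bar S$ embeds in $X$. Take a homogeneous tube, so that $S_n/S\cong S_{n-1}$ and $\bar S/S\cong\bar S$. Let $X\in\bar\R$ be obtained from $\bigoplus_{n\geq1}S_n$ by identifying the socles of all the $S_n$ with one copy of $S$. This is the tube analogue of a reduced abelian $p$-group with an element of order $p$ of infinite height. Then $S$ lies in a copy of $S_n$ for every $n$, but $X$ has no Pr\"ufer submodule. Indeed, such a submodule would map to a Pr\"ufer submodule of $X/S\cong\bigoplus_{n\geq1}S_n/S$. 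Being injective in $\bar\R$, that image would be an indecomposable infinite length direct summand of a direct sum of finite length modules, which Azumaya's theorem rules out. So compatible embeddings need not exist. Zorn's Lemma only produces finite maximal chains here, and the map $g\colon X\to\bar S$ points the wrong way to produce a subobject of $X$. Your closing remark that $S$ may have to be re-chosen is exactly the step left open.

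What is missing is a \emph{global} dichotomy over all simple subobjects of $X$.

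\emph{First case.} Some simple $S\subseteq X$ admits a maximal $n$ with $S\subseteq S_n\subseteq X$. Then your subcase~(a) can be completed: decomposing finite length objects of the tube into uniserials, one checks that $S_n$ is a direct summand of every $Y\in\R$ with $S_n\subseteq Y\subseteq X$. Hence $S_n\subseteq X$ is pure and therefore splits.

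\emph{Second case.} No simple subobject admits such an $n$. Then you must prove that $X$ itself is injective in $\bar\R$. This is the analogue of the fact that a $p$-group all of whose elements of order $p$ have infinite height is divisible. One way to do it: suppose $X\subsetneq E$ with $E$ an injective envelope in $\bar\R$, and pick a uniserial $U\subseteq E$ of minimal length $m$ with $U\not\subseteq X$.
Essentiality gives $m\geq2$.
Minimality puts the radical, hence the socle, of $U$ inside $X$.
So there is $V\subseteq X$ with $V\cong U$ and the same socle.
Take isomorphisms $S_m\to U$ and $S_m\to V$ agreeing on the socle; their difference has as image a shorter uniserial not contained in $X$, a contradiction.
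Once $X$ is injective, extend $S\rightarrowtail X$ along $S\rightarrowtail\bar S$ using injectivity of $X$, not of $\bar S$. The extension is a monomorphism since $S$ is essential in $\bar S$, and it splits since $\bar S$ is injective in $\bar\R$.
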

\begin{proof}
  Let $M\neq 0$ be torsion. If $\fri(M)\neq 0$, then $\fri(M)$ admits
  a finite length indecomposable direct summand by
  Proposition~\ref{pr:preinj}, which is also a summand of $M$ since
  $\fri(M)$ is a pure submodule of $M$. Otherwise, $\fri(M)= 0$ and
  $M$ lies in $\bar\R$.  Then the assertion follows from the fact that
  $\R$ is uniserial; see \cite[Theorem~13.1.28.]{Kr2022}.
\end{proof}

For each simple object $S\in\R$ there exists $n\ge 1$ such that
$\tau^nS\cong S$. Then the cokernel of the inclusion
$S_n \rightarrowtail S_{in}$ is isomorphic to $S_{(i-1)n}$ for all
$i\ge 1$. The family of epimorphisms
$\phi_i\colon S_{in}\twoheadrightarrow S_{(i-1)n}$ induces an
epimorphism $\phi\colon\bar S\twoheadrightarrow\bar S$ such that
$\phi=\bigcup_{i\ge 1}\phi_i$ and $\Ker\phi^i=S_{in}$ for all
$i\ge 1$.
  
\begin{lem}\label{le:prufer}
  Each Pr\"ufer module is artinian over its endomorphism ring, and
  hence $\Sigma$-pure injective over $\Lambda$. Moreover, the
  injective objects in $\bar\R$ are precisely the coproducts of
  Pr\"ufer modules.
\end{lem}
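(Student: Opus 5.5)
We plan to compute the $\End_\Lambda(\bar S)$-submodules of $\bar S$ directly, using the endomorphism $\phi\colon\bar S\twoheadrightarrow\bar S$ constructed above. The first claim is that every proper $\End_\Lambda(\bar S)$-submodule $U\subsetneq\bar S$ satisfies $S_{(i-1)n}\subseteq U\subseteq S_{in}$ for some $i\ge 1$, taking $S_0=0$. Granting this, artinianity follows quickly. Each $S_{in}$ is finite dimensional, so a descending chain of submodules either stabilises at $\bar S$ or eventually lies inside some fixed $S_{in}$, where it must stabilise for dimension reasons. Then $\bar S$ has dcc on all $\End$-submodules, and in particular on subgroups of finite definition, since these are $\End$-submodules. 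Hence $\bar S$ is $\Sigma$-pure injective by Proposition~\ref{pr:subgroup-fin-def}(1).

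To prove the claim, we first show that the $\End(\bar S)$-submodules of $\bar S$ that are also $\Lambda$-submodules lying in $\bar\R$ are exactly $0$, the $S_m$, and $\bar S$. Any proper $\Lambda$-submodule of $\bar S$ lying in $\bar\R$ is some $S_m$, because $\bar S$ is uniserial in $\bar\R$ with essential socle $S$. For arbitrary $U$, we take $x\in\bar S\setminus S_{in}$. Then $x\in S_{jn}\setminus S_{(j-1)n}$ for some $j>i$, and $\phi^{j-i}(x)\in S_{in}\setminus S_{(i-1)n}$ since $\Ker\phi^{j-i}=S_{(j-i)n}$ and $\phi$ shifts the layers. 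The next step is to show that the $\End(\bar S)$-orbit of such an element covers $S_{(i-1)n}$. Here we would use the injectivity of $\bar S$ in $\bar\R$: any $\Lambda$-map from the finite regular submodule $x\Lambda$-closure (the smallest $S_m$ containing $x$) to $\bar S$ extends to an endomorphism. Since $\bar S$ is uniserial, $\Hom(S_m,\bar S)$ maps a generator onto anything in $S_{m'}$ with $m'\le m$ that is reachable by a morphism $S_m\to S_{m'}\subseteq\bar S$. This shows that $U\ni x$ forces $U\supseteq S_{m-n}$ roughly, which is enough to bound $U$ between consecutive $S_{in}$'s. We expect this orbit computation to be the main obstacle. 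The delicate point is that $x\Lambda$ need not lie in $\R$, so we would instead work with the smallest $S_m\ni x$ and use that epimorphisms $S_m\twoheadrightarrow S_{m-kn}$ exist, because $\tau^n$ fixes the socle.

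For the second statement, coproducts of Prüfer modules are injective in $\bar\R$. Indeed, $\bar\R$ is locally noetherian, as it is generated by the finite length objects of $\R$, so coproducts of injectives are injective; alternatively, this follows from $\Sigma$-pure injectivity together with Baer's criterion. Conversely, let $E$ be injective in $\bar\R$. Lemma~\ref{le:torsion-indec} applied inside $\bar\R$, or Corollary~\ref{co:decomposition} once we know that $E$ is $\Sigma$-pure injective, would give summands. Instead we argue directly: the socle of $E$ in $\bar\R$ is a direct sum of simples $S$ because $\R$ is uniserial, and $E$ is the injective envelope of this socle. The coproduct of the corresponding $\bar S$ is injective and is an essential extension of the socle. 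Hence it is isomorphic to $E$, by uniqueness of injective envelopes in the Grothendieck category $\bar\R$.
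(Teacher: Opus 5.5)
\textbf{There is a genuine gap in the first part.} Your central claim is false: you assert that every proper $\End_\Lambda(\bar S)$-submodule $U\subsetneq\bar S$ satisfies $S_{(i-1)n}\subseteq U\subseteq S_{in}$. So the orbit computation you single out as the ``main obstacle'' cannot succeed.

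The reason is that endomorphisms of $\bar S$ are $\Lambda$-linear, so they commute with right multiplication by idempotents of $\Lambda$. Hence for a primitive idempotent $e$ the subspace $\bar S e$ is an $\End_\Lambda(\bar S)$-submodule; it is the subgroup of finite definition given by the image of $\Hom_\Lambda(e\Lambda,\bar S)$. It is infinite dimensional, because every simple $\Lambda$-module occurs with unbounded multiplicity in the $S_m$ (the null root is sincere). It is proper, because a connected tame hereditary algebra has at least two simple modules. So $\bar S e$ lies between no pair $S_{(i-1)n}\subseteq S_{in}$.

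Concretely, take the Kronecker algebra and $S=(k,k,1,0)$. Then $\bar S=(E,E,1,t)$, where $E$ is the injective envelope of $k$ over $k[t]_{(t)}$, and $\End_\Lambda(\bar S)\cong k[[t]]$ acts diagonally. Each of the two components $E$ is a proper, infinite-dimensional $\End_\Lambda(\bar S)$-submodule.

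In particular, for $x\in\bar S e$ the orbit $\End_\Lambda(\bar S)x$ stays inside $\bar S e$ and can never cover $S_{(i-1)n}$. Extending maps $S_m\to\bar S$ by injectivity does not help, because $\Lambda$-linearity constrains where $x$ can go. Note also that determining the $\End$-submodules which happen to be $\Lambda$-submodules lying in $\bar\R$ is beside the point. You need dcc on all $\End$-submodules, and subgroups of finite definition such as $\bar S e$ are typically not $\Lambda$-submodules.

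\textbf{The fix, which is the paper's argument,} is not to classify submodules at all, but to prove dcc over the subring $k[\phi]\subseteq\End_\Lambda(\bar S)$. Since $\Ker\phi^i=S_{in}$ and $\bar S=\bigcup_i S_{in}$, the map $\phi$ is locally nilpotent. So $\bar S$ becomes a module over $k[t]_{(t)}$ via $t\mapsto\phi$, and it is an essential extension of $\Ker\phi=S_n$. Since $S_n$ is finite dimensional, say of dimension $d$, $\bar S$ embeds into $E^d$. This is artinian: $E$ is uniserial with all proper submodules finite dimensional, or cite Matlis. Every $\End_\Lambda(\bar S)$-submodule is in particular a $k[\phi]$-submodule, so $\bar S$ is artinian over its endomorphism ring. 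Your appeal to Proposition~\ref{pr:subgroup-fin-def}(1) then finishes.

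\textbf{Your second part is fine} and matches the paper: $\bar\R$ is locally noetherian, and you invoke uniqueness of injective envelopes. One correction: what makes an injective $E$ the envelope of its socle is that $\bar\R$ is locally finite, so every non-zero subobject contains a simple one. Uniseriality of $\R$ is not what makes the socle semisimple.
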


\begin{proof}
  Consider $\bar S$ as a module over the noetherian local ring
  $k[t]_{(t)}$ via the homomorphism $k[t]\to \End_\La(\bar S)$ that
  sends $t$ to $\phi$. As such $\bar S$ is an essential extension of
  $S_n=\Ker\phi$. The latter is finite dimensional, so $\bar S$ embeds into
  a finite direct sum of copies of the injective envelope of $k$, which is
  artinian by \cite[Proposition~3]{Ma1960}. Thus $\bar S$ is artinian
  over $\End_\La(\bar S)$, and therefore $\Sigma$-pure injective by
  Proposition~\ref{pr:subgroup-fin-def}.

  The category $\bar\R$ is a \emph{locally noetherian Grothendieck category}, so
  it has a generating set of noetherian objects. Therefore the
  injective objects are precisely the coproducts of the indecomposable
  injective objects, which are the injective envelopes of the simple
  objects \cite[IV.2]{Ga1962}.
\end{proof}

The following result summarises the structure of the torsion regular
modules, which follows from the fact that $\R$ is a uniserial
category, and its decomposition into  connected components.

\begin{prop}\label{pr:tor-reg}
  Every torsion regular module admits a unique decomposition
  $M=\bigoplus_{x\in\mathbb X}M_x$ where $M_x\in\bar\R_x$. Moreover,
  there are pure exact sequences $0\to L_x\to M_x\to N_x\to 0$ where
  $L_x$ is a direct sum of modules in ${\R_x}$ and $N_x$ is a direct
  sum of Pr\"ufer modules in $\bar\R_x$.
\end{prop}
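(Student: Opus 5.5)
The plan is to obtain the decomposition from the block decomposition $\R=\coprod_x\R_x$ by passing to filtered colimits, and then to build the pure exact sequence inside each block from the theory of uniserial categories (basic submodules).

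\emph{Step 1 (decomposition).} Let $M\in\bar\R$ be torsion regular and write $M=\colim_i C_i$ with $C_i\in\R$. Each $C_i$ decomposes as $C_i=\bigoplus_x (C_i)_x$, and because $\Hom_\La(\R_x,\R_y)=0$ for $x\neq y$, every transition morphism respects this decomposition. Since filtered colimits commute with direct sums, we get $M=\bigoplus_x M_x$ with $M_x=\colim_i (C_i)_x\in\bar\R_x$. Uniqueness follows because $\Hom_\La(\bar\R_x,\bar\R_y)=0$ for $x\ne y$, which comes from the computation $\lim\colim\Hom=0$ as in the first lemma. Concretely, $M_x$ is the largest submodule of $M$ lying in $\bar\R_x$: it is the sum of the images of all maps from objects of $\R_x$.

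\emph{Step 2 (basic submodule).} Fix $x$ and set $N:=M_x$. Since $\R_x$ is a uniserial length category with finitely many simples, I will mimic the construction of a basic subgroup of a torsion abelian group, following \cite[\S13.1]{Kr2022}. For $n\ge1$ let $\mathcal R_x^{\le n}$ be the full subcategory of objects of $\R_x$ of Loewy length at most $n$; it has only finitely many indecomposables. Using Zorn's lemma, choose a family of submodules $L=\bigoplus_j U_j\subseteq N$ with each $U_j$ indecomposable in $\R_x$, chosen \emph{height by height}, so that $L$ is pure in $N$. Purity can be tested by coherent functors $\Hom(C,-)$ and the uniserial structure, or equivalently by requiring each finite partial sum to be a direct summand of every finite length submodule containing it, so that $0\to L\to N\to N/L\to0$ is a filtered colimit of split sequences in $\R_x$. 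Maximality of the family should then force $N/L$ to be \emph{divisible within} $\bar\R_x$, meaning every map $S_n\to N/L$ extends along $S_n\rightarrowtail S_{n+1}$. This is equivalent to $\Ext^1(\R_x,N/L)=0$ inside $\bar\R_x$, that is, $N/L$ is injective in $\bar\R_x$ by Baer's criterion applied to the noetherian generators.

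\emph{Step 3 (conclusion).} By Lemma~\ref{le:prufer}, the injective objects of $\bar\R_x$ are coproducts of Prüfer modules, and these Prüfer modules lie in $\bar\R_x$. Hence $N_x:=N/L$ is a direct sum of Prüfer modules, while $L_x:=L$ is a direct sum of modules in $\R_x$. Purity follows from Step 2, or alternatively from the Remark after the torsion pair lemma applied to a suitable split torsion pair.

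The main obstacle is Step 2: showing simultaneously that $L$ is pure and that the quotient is divisible. The plan is to proceed layer by layer. Let $N[n]$ denote the sum of images of $S_n$-type maps, which plays the role of the $\phi^n$-torsion. Choose $L$ so that for each $n$ the layer $(L\cap N[n]+\text{rad})/\dots$ maps onto the "top" of $N[n]$ modulo elements of greater height, exactly as in Kaplansky's basic subgroup construction. Here the ordinary height of an element is replaced by the largest $m$ such that a map $S_1\to N$ extends to $S_m$ through the radical. Should this get messy, the fallback is to cite \cite[Theorem~13.1.28]{Kr2022}, which already treats uniserial categories.
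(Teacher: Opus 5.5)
\textbf{Verdict: there is a genuine gap in Step~2.} Your Steps~1 and~3 are fine and agree with the paper. The block decomposition comes from $\Hom_\La(\R_x,\R_y)=0$ for $x\neq y$. At the end, Lemma~\ref{le:prufer} turns an injective object of $\bar\R_x$ into a direct sum of Pr\"ufer modules.

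The gap is Step~2, which carries the whole second assertion: you never prove that maximality of $L$ forces $N/L$ to be injective in $\bar\R_x$. You only say it \emph{should}, and the Kaplansky-style sketch is not an argument. The layers you describe are not defined, since the key expression contains ellipses, and the height calculus for $p$-groups would have to be redone inside a tube. The fallback citation of \cite[Theorem~13.1.28]{Kr2022} is a placeholder, not an identified statement that closes the step. Your alternative justification of purity via a suitable split torsion pair also does not work. Both $L$ and $N/L$ lie in $\bar\R_x$, and in general no split torsion pair $(\T,\F)$ of $\mod\La$ has $L\in\bar\T$ and $N/L\in\bar\F$. So purity must come from the construction of $L$.

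The missing idea is to split the implication into two easy pieces and then invoke Lemma~\ref{le:torsion-indec}.
\begin{enumerate}
\item Take $L=\bigoplus_jU_j$ maximal, by Zorn's lemma, among pure independent families of indecomposables from $\R_x$, as in Proposition~\ref{pr:X-basic}. Then $N/L$ has no direct summand $X\in\R_x$. Such an $X$ is finite dimensional, hence pure-projective, so the pullback of $0\to L\to N\to N/L\to 0$ along $X\to N/L$ splits. This makes $L\oplus X$ a strictly larger submodule of $N$. It is pure, because its cokernel is reached by the pure epimorphism $N\to N/L\to (N/L)/X$, which contradicts maximality.
\item Since $\bar\R_x$ is locally noetherian, write $N/L=E\oplus N'$ with $E$ a maximal injective summand. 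Then $N'$ has no Pr\"ufer summand and, by the first step, no summand in $\R_x$. Hence $N'=0$ by Lemma~\ref{le:torsion-indec}.
\end{enumerate}
So $N/L=E$ is injective in $\bar\R_x$, and your Step~3 applies. The paper builds $L$ differently, as the union of maximal pure submodules $M_n$ of Loewy length at most $n$, but it concludes with exactly this argument.
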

\begin{proof}
  Given $M\in\bar\R$, we have $M=\bigoplus_xM_x$, where
  $M_x\in\bar\R_x$ is the colimit over all submodules $U\subseteq M$
  with $U\in\R_x$. Now fix $x\in\bbX$ and let $M\in\bar\R_x$.  For
  $n\ge 0$ let $\R_{x,n}$ denote the subcategory of objects in $\R_x$ of
  Loewy length at most $n$; it has up to isomorphism only finitely many indecomposable
  objects.  Let $M_n$ be the maximal pure submodule of $M$ which is of
  Loewy length at most $n$; it is a direct sum of indecomposables
  in $\R_{x,n}$ and therefore a direct summand. We obtain a chain of
  split monomorphisms $M_0\to M_1\to M_2\to\cdots$ and set
  \[L\coloneqq\bigcup_{n\ge 0}M_n=\bigoplus_{n\ge 0}M_{n+1}/M_n\]
  where $M_{n+1}/M_n$ is a direct sum of copies of the finitely many
  indecomposables in $\R_{x,n+1}-\R_{x,n}$. This yields a pure exact
  sequence $0\to L\to M\to N\to 0$ in $\bar\R_x$. Recall from
  Lemma~\ref{le:torsion-indec} that every non-zero module in
  $\bar\R_x$ must have a direct summand which is either in ${\R_x}$ or a
  Pr\"ufer module. By construction $N$ has no direct summand in ${\R_x}$;
  thus $N$ is a direct sum of Pr\"ufer modules. Here, one uses that
  every object has a maximal injective summand since $\bar\R_x$ is
  locally noetherian.
\end{proof}

We note every torsion module is the direct sum of a preinjective
module and a torsion regular module; see
Proposition~\ref{pr:preinjectives-split}. On the other hand, if $X$ is
regular, then both $\frt(X)$ and $\frf(X)$ are regular, so
$\frt(X)\in\bar\R$.

\subsection*{Torsionfree and divisible modules}

We investigate the category of divisible modules and show that they
are $\Si$-pure injective. Moreover, the torsionfree divisible modules
are precisely the direct sums of copies of a distinguished
indecomposable  endofinite module.

The $k$-duality for $\Lambda$-modules \eqref{eq:k-dual} implies for a $\Lambda$-module $X$:
\begin{align*}
X \text{ divisible} \qquad &\iff \qquad DX \text{ torsionfree}\\
X \text{ torsionfree} \qquad &\iff \qquad DX\text{ divisible}.
\end{align*}
The first equivalence is clear. As $\Hom_\Lambda(R,-)$ is coherent for
all $R\in\R$, the module $X$ is torsionfree if and only if $D^2X$ is
torsionfree by Lemma~\ref{le:coherent}. Then the isomorphism
$\Hom_\Lambda(R,D^2X)\cong \Hom_{\Lambda^\op}(DX,DR)$ yields the
second equivalence.

We observe that the subcategory of torsionfree divisible modules
identifies with $\R^\perp$. Also, every torsionfree divisible module is
regular, and has no non-zero finite dimensional direct summands.

The following is a  key result for our discussion.
\begin{lem}\label{le:sub-P}
Let $P$ be a projective $\Lambda$-module of defect $-1$ and $X$ a regular $\Lambda$-module.
\begin{enumerate}
\item If $\frf(X)$ is non-zero, then $\Hom_\La(P,\frf(X))\neq 0$, and
  every non-zero map $P\to\frf(X)$ is injective with regular cokernel.
\item There is a submodule $P_X\subseteq X$ such that $P_X$ is a direct sum of copies of $P$ and $X/P_X$ is torsion regular.
\end{enumerate}
\end{lem}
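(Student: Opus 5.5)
The plan is to prove (1) with defect and torsion-pair arguments, and to deduce (2) from (1) by a Zorn's Lemma argument. Throughout I use that $P$ is indecomposable preprojective with $\partial(P)=-1$. I also use that a nonzero finite dimensional module in $\P$ has negative defect, one in $\I$ has positive defect, and a projective module $K$ has $\partial(K)\le 0$, with equality only if $K=0$.

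For (1), first note that $\frf(X)$ is a quotient of $X$. Since $X$ is regular, $\Hom_\La(X,Q)=0$ for all $Q\in\P$, and hence $\Hom_\La(\frf(X),Q)=0$.

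\emph{Injectivity.} Let $g\colon P\to\frf(X)$ be nonzero. Its image $\Im g$ is finite dimensional and lies in $\bar\P\cap\mod\La=\P$, so $\partial(\Im g)\le -1$. Since $\La$ is hereditary, $K=\Ker g$ is projective, so
\[\partial(\Im g)=-1-\partial(K)\ge -1.\]
Hence $\partial(K)=0$, so $K=0$ and $g$ is injective.

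\emph{Regular cokernel.} Let $C=\Coker g$. For $Q\in\P$, left exactness gives $\Hom_\La(C,Q)\subseteq\Hom_\La(\frf(X),Q)=0$. For the preinjective part, suppose $I\to C$ is nonzero with $I\in\I$. Its image $I'$ is a nonzero quotient of a preinjective, hence preinjective, so $\partial(I')>0$. Pulling back $0\to P\to\frf(X)\to C\to 0$ along $I'\subseteq C$ gives a finite dimensional submodule $E\subseteq\frf(X)$ with $0\to P\to E\to I'\to 0$. Then $E\in\P$ is nonzero, but $\partial(E)=-1+\partial(I')\ge 0$, a contradiction.

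\emph{Existence of a nonzero map.} This is the step I expect to be the main obstacle: showing $\Hom_\La(P,\frf(X))\neq0$. Pick a nonzero indecomposable $Y\in\P$ inside $\frf(X)$, which exists since $\frf(X)$ is a nonzero filtered colimit of modules in $\P$. Since $P$ is projective, $\dim\Hom_\La(P,Y)=\langle \underline\dim P,\underline\dim Y\rangle$. I would first try to show this is positive using two facts: $\Hom_\La(Y,P)=0$ (because $\Hom_\La(\frf(X),P)=0$), and the defect expressed via the Euler form with the null root $\delta$. If that fails, I would try to exploit the freedom to choose $Y$ inside $\frf(X)$, for instance taking $Y$ large enough that it maps onto enough of the structure of $\frf(X)$ to force a nonzero map from $P$.

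For (2), I would apply Zorn's Lemma to submodules $U\subseteq X$ that are direct sums of copies of $P$ with $X/U$ regular, ordered by inclusion of direct-sum families.

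\emph{Chains.} For the union $U$ of a chain, $X/U=\colim X/U_i$. Regular modules are closed under filtered colimits: they form a definable subcategory (Example~\ref{ex:definable}), and $\Hom_\La(\I,-)$ commutes with filtered colimits. So the union is again such a submodule, and we may choose a maximal $P_X$.

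\emph{Maximality forces torsion.} Put $Y=X/P_X$, which is regular. Suppose $\frf(Y)\neq0$. By (1) there is a monomorphism $P\to\frf(Y)$ with regular cokernel $C$. Since $P$ is projective, it lifts along $Y\to\frf(Y)$ to a monomorphism $P\to Y$. Its cokernel $Y'$ is an extension of $\frt(Y)$ by $C$. Both are regular, and regularity, i.e.\ $\Hom_\La(-,\P)=0=\Hom_\La(\I,-)$, is closed under extensions, so $Y'$ is regular. Lifting $P\to Y$ to $X$ (again using projectivity) gives the submodule $P_X\oplus P\subseteq X$ with regular quotient $Y'$, contradicting maximality.

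\emph{Conclusion.} Therefore $\frf(Y)=0$, so $Y$ is torsion and regular, and hence torsion regular, i.e.\ $Y\in\bar\R$ by the lemma identifying the torsion regular modules with $\bar\R$.
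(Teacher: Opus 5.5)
Your proofs of injectivity, of the regularity of the cokernel, and the Zorn's Lemma argument for (2) are correct and essentially the paper's. The genuine gap is the existence of a non-zero map $P\to\frf(X)$, which you leave open. This is not peripheral: (2) needs it to produce the embedding $P\rightarrowtail\frf(X/P_X)$.

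The route you sketch also fails as stated. The inference ``$\Hom_\La(Y,P)=0$ because $\Hom_\La(\frf(X),P)=0$'' goes in the wrong direction. Restriction $\Hom_\La(\frf(X),P)\to\Hom_\La(Y,P)$ along $Y\subseteq\frf(X)$ need not be surjective, and by the very statement being proved $P$ embeds in $\frf(X)$, so $Y=P$ is a counterexample. Even granting $\Hom_\La(Y,P)=0$ for a preprojective $Y$, the number $\langle\underline{\dim} P,\underline{\dim} Y\rangle$ can vanish. For a path algebra of type $\tilde D_4$ whose centre is a sink or a source, the projectives at two different outer vertices both have defect $-1$, and there are no non-zero maps between them in either direction. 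More fundamentally, the hypothesis $\Hom_\La(X,\P)=0$ is a property of the infinite-dimensional module $\frf(X)$ as a whole and does not pass to its finite-dimensional submodules. So an argument about one finite-dimensional piece cannot succeed without some global structure result.

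The missing idea, which the paper uses, runs as follows. Write $P\cong e\La$ for an idempotent $e\neq 0$. Then $\Hom_\La(P,M)\cong Me$, so the modules $M$ with $\Hom_\La(P,M)=0$ are precisely the modules over the proper factor algebra $\La/\La e\La$, which is of finite representation type. Over a representation-finite algebra \emph{every} module, of arbitrary dimension, is a direct sum of finite-dimensional indecomposables. You can see this from Corollary~\ref{co:endofinite} applied to the direct sum of the finitely many indecomposables, since every module is the filtered colimit of its finite-dimensional submodules. Hence if $\Hom_\La(P,\frf(X))=0$, the non-zero module $\frf(X)$ is a direct sum of finite-dimensional modules, and these are preprojective because $\frf(X)$ is torsionfree. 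Composing $X\twoheadrightarrow\frf(X)$ with the projection onto one such summand gives a non-zero map $X\to\P$, contradicting that $X$ is regular.
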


\begin{proof}
(1) The subcategory of $\Lambda$-modules $M$ satisfying $\Hom_\Lambda(P,M)=0$ identifies with the module category of a proper factor algebra of $\Lambda$, which is therefore necessarily of finite representation type. Thus if $\Hom_\Lambda(P,\frf(X))=0$, then $\frf(X)$ is a direct sum of finite dimensional preprojective modules, contradicting that $X$ is regular. It follows that there exists a non-zero morphism $\phi\colon P\to\frf(X)$.

Next $\partial(\Im\phi)<0$ as $\frf(X)$ is torsionfree, so $\Ker\phi=0$. Also, $\Hom_\Lambda(\frf(X)/P,\P)=0$ as $X$ is regular. Finally, if we have $P\subset U\subset\frf(X)$ with $U/P\in\I$, then $\partial(U)=\partial(U/P)-1\geq0$, contradicting that $\frf(X)$ is torsionfree. Thus $\frf(X)/P$ is regular.

(2) Let $\U$ denote the set of all submodules $U\subseteq X$ that are direct sums of copies of $P$, and such that $X/U$ is regular. Note that both direct sums of copies of $P$ and regular modules are closed under taking filtered colimits. Thus every chain in $\U$ has an upper bound, and so $\U$ has a maximal element $P_X$ by Zorn's Lemma.

We claim that $X/P_X$ is torsion. If not, then by (1) there is an
inclusion $P\rightarrowtail\frf(X/P_X)$ with regular cokernel $C$. We
lift this to an inclusion $P\rightarrowtail X$ and obtain an exact
sequence $0\to \frt(X/P_X)\to X/(P+P_X)\to C\to 0$. Both end terms are regular. Thus $X/(P+P_X)$ is also regular, contradicting the maximality of $P_X$.
\end{proof}

\begin{lem}\label{le:ext-divisible-reg}
Let $X,Y$ be $\Lambda$-modules such that $X$ is regular and $Y$ is divisible. Then $\Ext^1_\Lambda(X,Y)=0$. In particular,  every divisible module in $\bar{\R}$ is a direct sum of Pr\"ufer modules.
\end{lem}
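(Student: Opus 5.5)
We reduce to the two building blocks of a regular module given by Lemma~\ref{le:sub-P}(2). Fix a projective $P$ of defect $-1$ and the submodule $P_X\subseteq X$, so $P_X\cong P^{(I)}$ and $X/P_X$ is torsion regular, hence in $\bar\R$. The long exact sequence for $0\to P_X\to X\to X/P_X\to 0$ gives
\[\Ext^1_\La(X/P_X,Y)\to\Ext^1_\La(X,Y)\to\Ext^1_\La(P_X,Y)=0,\]
the last term vanishing because $P_X$ is projective. It therefore suffices to show $\Ext^1_\La(M,Y)=0$ for all $M\in\bar\R$ and divisible $Y$.

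For $M\in\bar\R$ we use the remark that $Y$ divisible means $\Ext^1_\La(R,Y)=0$ for all $R\in\R$. We write $M=\colim_i R_i$ as a filtered colimit with $R_i\in\R$. We want to pass $\Ext^1$ through this colimit. Since $\La$ is hereditary, $M$ has a projective presentation, and the standard formula $\Ext^1_\La(\colim_i R_i,Y)$ involves the derived limit: there is an exact sequence
\[0\to {\lim}^1\Hom_\La(R_i,Y)\to\Ext^1_\La(M,Y)\to\lim\Ext^1_\La(R_i,Y)\to 0\]
when the colimit is countable. In general, one can instead argue via a pure projective presentation. The pure exact sequence $0\to K\to\bigoplus_i R_i\to M\to 0$ has $K\in\bar\R$ (regular torsion modules are closed under kernels of epimorphisms between them, as $\R$ is abelian). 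Here $\Ext^1(\bigoplus R_i,Y)=\prod\Ext^1(R_i,Y)=0$, so $\Ext^1(M,Y)$ is a quotient of $\Hom(K,Y)$, namely the cokernel of the restriction $\Hom(\bigoplus R_i,Y)\to\Hom(K,Y)$. This is the main obstacle: we need every map $K\to Y$ to extend. The cleanest route I would try is to replace $Y$ by its pure-injective envelope. The divisible modules form a definable subcategory (Example~\ref{ex:definable}(2)), so $Y\to D^2Y$ is pure with $D^2Y$ divisible and pure-injective. For pure-injective $Y'$, a pure exact sequence is sent by $\Hom(-,Y')$ to an exact sequence, so the restriction is surjective and $\Ext^1(M,D^2Y)=0$.

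To descend from $D^2Y$ to $Y$, note that $D^2Y/Y$ is again divisible, since the sequence is pure and $\Hom(-,R)\cong D(-\otimes DR)$ is exact on pure sequences by Example~\ref{ex:definable}(2). We then get $\Hom(M,D^2Y/Y)\to\Ext^1(M,Y)\to\Ext^1(M,D^2Y)=0$. This leaves the hard part unresolved. Instead I would argue directly using that $\Ext^1_\La(-,Y)$ is the cokernel of $\Hom(\La\text{-proj},Y)$, and in particular use Lemma~\ref{le:coherent}. Since $\Ext^1_\La(R,-)\cong D\Hom_\La(-,\tau R)$ is controlled by $\Hom(Y,\tau R)=0$, the statement $\Ext^1(M,Y)\cong D\Hom(Y,\tau M)$ extends by the Auslander--Reiten formula, with $\tau$ as defined on $\Mod\La$ via the bimodule. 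For hereditary $\La$ this gives $\Ext^1_\La(M,Y)\cong D\Hom_\La(Y,D\tau^{-}\!\ldots)$. Concretely, I would prove $\Ext^1_\La(M,Y)\cong D\Hom_\La(Y,\tau M)$ for $M$ a filtered colimit of finite-dimensional modules without projective summands. Then $\tau M\in\bar\R$ is a filtered colimit of the $\tau R_i$, and $\Hom(Y,\tau M)=0$ should follow from divisibility via a finite-generation argument. That last step is where I expect the real difficulty.

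Finally, for the second assertion, let $M\in\bar\R$ be divisible. Then $\Ext^1(N,M)=0$ for all $N\in\bar\R$, so $M$ is injective in $\bar\R$, and Lemma~\ref{le:prufer} shows $M$ is a coproduct of Prüfer modules.
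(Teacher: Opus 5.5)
Your reduction to the torsion regular case via $0\to P_X\to X\to X/P_X\to 0$ is the paper's first step. Your deduction of the second assertion from injectivity in $\bar\R$ and Lemma~\ref{le:prufer} is also exactly the paper's.

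The gap is the core case: you never prove $\Ext^1_\La(M,Y)=0$ for $M\in\bar\R$. Your pure-presentation argument gives this only for the pure-injective module $D^2Y$, and you leave the descent to $Y$ open. It cannot be closed by invoking pure-injectivity of $Y$, because that is Proposition~\ref{pr:divisible-pure-inj}, which is deduced from this lemma.

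Your final route is wrong, not just incomplete. The Auslander--Reiten formula reads $D\Ext^1_\La(M,Y)\cong\Hom_\La(Y,\tau M)$ for finite-dimensional $M$, not $\Ext^1\cong D\Hom$. It does not extend to infinite-dimensional $M$, and the vanishing you hope for is false. Take $M=\bar S$ and $Y=\tau\bar S\cong\overline{\tau S}$, which is a Pr\"ufer module and hence divisible. Then $\Hom_\La(Y,\tau M)=\End_\La(\tau\bar S)\neq 0$, whereas the lemma asserts $\Ext^1_\La(M,Y)=0$.

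The missing idea is to use a filtration by monomorphisms with factors in $\R$, rather than an arbitrary filtered system. The paper embeds $M\in\bar\R$ into a direct sum $E=\bigoplus_i\bar S_i$ of Pr\"ufer modules, namely its injective envelope in $\bar\R$ (Lemma~\ref{le:prufer}). Since $\La$ is hereditary, $\Ext^1_\La(-,Y)$ is right exact, so $\prod_i\Ext^1_\La(\bar S_i,Y)\cong\Ext^1_\La(E,Y)$ surjects onto $\Ext^1_\La(M,Y)$. For $\bar S=\bigcup_n S_n$ one has $\Ext^1_\La(S_n/S_{n-1},Y)=0$ because $S_n/S_{n-1}\in\R$ and $Y$ is divisible. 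Hence $\Ext^1_\La(\bar S,Y)=0$ by Eklof's lemma.

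In your own ${\lim}^1$ language: the vanishing on the factors makes each restriction $\Hom_\La(S_n,Y)\to\Hom_\La(S_{n-1},Y)$ surjective. So ${\lim}^1\Hom_\La(S_n,Y)=0$, and also $\lim\Ext^1_\La(S_n,Y)=0$. This Mittag-Leffler control is exactly what is unavailable for a general filtered system $(R_i)$ whose transition maps need not be injective.
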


\begin{proof}
  Let $P$ be projective of defect $-1$. By Lemma~\ref{le:sub-P} there
  is a short exact sequence $0\to P_X\to X\to X'\to0$ such that
  $P_X$  is a direct sum of copies of $P$ and $X'$ is torsion regular. Now
  $\Ext^1_\Lambda(P_X,Y)$ vanishes as $P$ is projective.

We are reduced to considering the case when $X$ is torsion regular. By
Lemma~\ref{le:prufer} $X$ embeds into a direct sum of Pr\"ufer
modules, so it is enough to prove the result when $X=\bar S$ is itself
a Pr\"ufer module. We have $\Ext^1_\La(\bar S,Y)=0$, since
$\bar S=\bigcup_{n\ge 1}S_n$ and $\Ext^1_\La(S_n/S_{n-1},Y)=0$ for all
$n\ge 1$; cf.\ \cite[Lemma 6.2]{GT}.
  
Finally, we see that every divisible $Y\in\bar\R$ is an injective object, so a direct
sum of Pr\"ufer modules by Lemma~\ref{le:prufer}.
\end{proof}   

\begin{prop}\label{pr:divisible-pure-inj}
  Every divisible $\La$-module is $\Si$-pure injective.
\end{prop}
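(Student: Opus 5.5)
The plan is to reduce to plain pure-injectivity and then obtain it from a vanishing-of-$\Ext$ argument via Lemma~\ref{le:ext-divisible-reg}.

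\textbf{Reduction.} The divisible modules form a definable subcategory (Example~\ref{ex:definable}(2)), so they are closed under direct sums. Hence if every divisible module is pure-injective, then so is $Y^{(I)}$ for every divisible $Y$ and every set $I$, which is $\Si$-pure injectivity. So fix a divisible $Y$. Definable subcategories are closed under double duals, and $Y\to D^2Y$ is a pure embedding. This gives a pure exact sequence
\[ \xi\colon\ 0\to Y\to D^2Y\to Z\to 0 \]
with $D^2Y$ divisible and pure-injective. It suffices to show that $\xi$ splits.

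\textbf{Divisibility of $Z$.} Since $\La$ is hereditary, $\Ext^1_\La(R,-)$ is right exact for $R\in\R$. Using the characterisation ``divisible $\iff \Ext^1_\La(\R,-)=0$'', the quotient $Z$ of the divisible module $D^2Y$ is again divisible.

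\textbf{Splitting off the preinjective part.} By Proposition~\ref{pr:preinj} there is a pure exact sequence $0\to\fri(Z)\to Z\to Z'\to 0$, where $\fri(Z)$ is a direct sum of finite dimensional preinjective modules.
\begin{itemize}
\item[(a)] Finitely presented modules are projective relative to pure exact sequences. Therefore the pullback of $\xi$ along $\fri(Z)\to Z$, which is again pure, splits. In other words, $\xi$ restricts to zero in $\Ext^1_\La(\fri(Z),Y)$.
\item[(b)] By the exact sequence $\Ext^1_\La(Z',Y)\to\Ext^1_\La(Z,Y)\to\Ext^1_\La(\fri(Z),Y)$, the class $\xi$ comes from $\Ext^1_\La(Z',Y)$.
\item[(c)] So it suffices to prove $\Ext^1_\La(Z',Y)=0$. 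By Lemma~\ref{le:ext-divisible-reg} this holds once $Z'$ is regular.
\end{itemize}

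\textbf{Regularity of $Z'$.} This is the step I expect to be the main obstacle. It has two halves.
\begin{itemize}
\item $\Hom_\La(Z',\P)=0$. The module $Z'$ is a quotient of the divisible module $Z$, hence divisible. Suppose $f\colon Z'\to P'$ is non-zero with $P'\in\P$. Its image $I$ is a finite dimensional preprojective module. For tame hereditary $\La$ every non-zero preprojective module admits a non-zero map to some regular module (a suitable tube). Composing, we obtain a non-zero map $Z'\to R$ with $R\in\R$, contradicting divisibility.
\item $\Hom_\La(\I,Z')=0$. If $N\to Z'$ is non-zero with $N\in\I$, its image $N'$ is preinjective. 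Its preimage in $Z$ is an extension of $N'$ by $\fri(Z)$. I would check that this extension lies in $\bar\I$; the key point is that the torsion class $\I$ is extension closed, and filtered colimits can be handled. This would contradict the maximality of $\fri(Z)$.
\end{itemize}
The second half is the delicate point. If it resists, the fallback is to pass to the long exact sequence, use the Auslander--Reiten formula $\Ext^1_\La(N,Y)\cong D\Hom_\La(Y,\tau N)$ to control the preinjective summands directly, and keep (a) as the decisive input.

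Once $Z'$ is regular, Lemma~\ref{le:ext-divisible-reg} gives $\Ext^1_\La(Z',Y)=0$. Hence $\xi=0$, so $\xi$ splits, $Y$ is a direct summand of the pure-injective module $D^2Y$, and $Y$ is pure-injective. By the reduction, every divisible module is $\Si$-pure injective.
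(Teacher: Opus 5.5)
Your proof is correct and follows the paper's argument essentially step for step: embed $Y$ purely into $D^2Y$, split the pullback along the pure-projective module $\fri(Z)$, apply Lemma~\ref{le:ext-divisible-reg} to $Z/\fri(Z)$, and then pass to direct sums using definability. The regularity of $Z/\fri(Z)$, which you flag as the delicate step and which the paper simply asserts, is immediate: $\fri(Z)$ is the torsion part of $Z$ for the torsion pair $(\bar\I,\overline{\P\vee\R})$, so $\Hom_\La(\I,Z/\fri(Z))=0$ by definition, and your divisibility argument for $\Hom_\La(Z/\fri(Z),\P)=0$ is fine.
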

\begin{proof}
  Let $X$ be divisible. Since the divisible modules form a definable
  subcategory, there is a pure exact sequence
  $\xi\colon 0\to X\to Y\to Z\to 0$ where $Y$ is pure-injective and
  all terms are divisible (take $Y=D^2X$). Recall from Proposition~\ref{pr:preinj} that
  $\fri(Z)$ is a direct sum of finite length modules and therefore
  pure-projective. Hence the pullback of $\xi$ along the embedding
  $\fri(Z)\rightarrowtail Z$ splits. It follows that $\xi$ is the pullback of an
  exact sequence in $\Ext^1_\Lambda(Z/\fri(Z),X)$, which vanishes by
  Lemma~\ref{le:ext-divisible-reg} as $Z/\fri(Z)$ is
  regular. We conclude that $\xi$ splits, and so
  $X$ is pure-injective. Given that every direct sum of copies of $X$
  is divisible, it follows that $X$  is $\Si$-pure injective.
  \end{proof}

\begin{cor}\label{co:ext-divisible}
   If $X,Y$ are $\La$-modules such that $\fri(X)=0$ and $Y$ is
   divisible, then $\Ext^1_\La(X,Y)=0$.
   \end{cor}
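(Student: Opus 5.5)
We reduce to the regular case already treated in Lemma~\ref{le:ext-divisible-reg}. Given $X$ with $\fri(X)=0$, we use the torsion pair $(\overline{\R\vee\I},\bar\P)$ and the exact sequence
\[0\to\frt(X)\to X\to\frf(X)\to 0.\]
Applying $\Hom_\La(-,Y)$ gives an exact sequence $\Ext^1_\La(\frf(X),Y)\to\Ext^1_\La(X,Y)\to\Ext^1_\La(\frt(X),Y)$. It therefore suffices to show that both outer terms vanish.

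For the torsion part, $\frt(X)$ is torsion and its maximal preinjective submodule is $\fri(\frt(X))\subseteq\fri(X)=0$, since a preinjective submodule of $\frt(X)$ is one of $X$. Hence, as in the proof that torsion regular modules are exactly $\bar\R$, we get $\frt(X)\in\bar\R$. In particular $\frt(X)$ is regular, and Lemma~\ref{le:ext-divisible-reg} gives $\Ext^1_\La(\frt(X),Y)=0$.

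For the torsionfree part, $\frf(X)\in\bar\P$ need not be regular, so I will argue directly. Write $\frf(X)=\colim F_i$ with $F_i\in\P$. Then I compute Ext against a divisible module via the Auslander--Reiten formula on finite pieces: $\Ext^1_\La(F,Y)\cong D\Hom_\La(Y,\tau F)$ for finite dimensional $F$ (more precisely $D\Hom_\La(\tau^{-}Y,F)$, or via the dual $DY$ being torsionfree). Since this does not immediately pass to colimits, I prefer a cleaner route: use that $Y$ is pure-injective (Proposition~\ref{pr:divisible-pure-inj}). For a pure-injective $Y$ and a filtered colimit, $\Ext^1_\La(\colim F_i,Y)$ is controlled by $\lim\Ext^1(F_i,Y)$ together with a $\lim^1$ term, and the $\lim^1$ term vanishes for pure-injective $Y$ (pure-injectives are injective with respect to pure exact sequences, and the canonical presentation $0\to K\to\bigoplus F_i\to\colim F_i\to0$ is pure). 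Thus it is enough to prove $\Ext^1_\La(F,Y)=0$ for $F\in\P$ finite dimensional.

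For $F$ indecomposable preprojective, $\Ext^1_\La(F,Y)\cong D\Hom_\La(Y,\tau F)$ by the Auslander--Reiten formula, valid for arbitrary $Y$ since $\La$ is hereditary. Here $\tau F\in\P$ is zero or preprojective, so I need $\Hom_\La(Y,\P)=0$ for divisible $Y$. This is the main obstacle, because divisibility only gives $\Hom_\La(Y,\R)=0$. I would handle it by showing that every finite dimensional preprojective $Q$ embeds into a regular module (take the universal extension by a projective of defect $-1$, or use that $Q$ has a monomorphism into some $R\in\R$ with $Q\subseteq R$, dual to preinjectives being quotients of regulars). A nonzero map $Y\to Q$ composed with $Q\rightarrowtail R$ would then give a nonzero map $Y\to R$, a contradiction. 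Hence $\Ext^1_\La(\frf(X),Y)=0$, which completes the argument.

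Alternatively, one could bypass the $\lim^1$ step by applying Lemma~\ref{le:sub-P}-style filtrations, but the route above seems most direct.
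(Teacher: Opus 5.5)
Your proof is correct, but it takes a longer route than the paper. The paper never splits off $\frt(X)$. Since $\fri(X)=0$, every finite length submodule $X'\subseteq X$ has no preinjective summand and so lies in $\P\vee\R$. Then $\Ext^1_\La(X',Y)=0$: on $\R$ this is just divisibility, and for a preprojective $P'\subseteq R\in\R$ heredity makes $\Ext^1_\La(R,Y)\to\Ext^1_\La(P',Y)$ surjective. Hence every extension $0\to Y\to E\to X\to 0$ splits on all finite length submodules, so it is pure exact, and it splits because $Y$ is pure-injective (Proposition~\ref{pr:divisible-pure-inj}).

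Your treatment of $\frf(X)$ is exactly this colimit/pure-injectivity argument, and it applies verbatim to $X$ itself. So the reduction to $\frt(X)\in\bar\R$ and the appeal to Lemma~\ref{le:ext-divisible-reg} are valid but superfluous; they also pull in the Pr\"ufer-module analysis, which the paper's argument needs only through Proposition~\ref{pr:divisible-pure-inj}. The one genuinely different ingredient is how you kill $\Ext^1$ on $\P$. You observe that $\Hom_\La(Y,\P)=0$ for divisible $Y$ and invoke the Auslander--Reiten formula, whereas the paper transports the vanishing from $\R$ to submodules by heredity. Both rest on the same fact that preprojectives embed into regular modules.

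Two small corrections. First, the embedding of preprojectives into regular modules is stated in the paper; it is dual to preinjectives being quotients of regular modules, applied over $\La^{\op}$. Cite that instead of the vague ``universal extension by a projective of defect $-1$'' remark, which does not by itself produce such an embedding.

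Second, for infinite dimensional $Y$ the Auslander--Reiten formula reads $D\Ext^1_\La(F,Y)\cong\Hom_\La(Y,\tau F)$. The version $\Ext^1_\La(F,Y)\cong D\Hom_\La(Y,\tau F)$ fails in general; take $Y=\La^{(\mathbb N)}$, where the left side has countable dimension but a dual space cannot. For the vanishing you need, either form gives the same conclusion, so this does not affect the argument.
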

 \begin{proof}
   Let $\xi\colon 0\to Y\to E\to X\to 0$ be an exact sequence. The
   pullback along the embedding $X'\rightarrowtail X$ of a finite
   length submodule splits, because in this case $X'\in \P\vee\R$, and
   every module from $\P$ embeds into a module from $\R$. It follows
   that $\xi$ is pure exact, and therefore $\xi$ splits since $Y$ is
   pure-injective by Proposition~\ref{pr:divisible-pure-inj}.
 \end{proof}
 
 \begin{thm}\label{th:endofinite}
Every torsionfree divisible $\Lambda$-module is endofinite, and all indecomposable torsionfree divisible $\Lambda$-modules are isomorphic. 
\end{thm}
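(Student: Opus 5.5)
We need to show that every torsionfree divisible module is endofinite, and that any two indecomposable ones are isomorphic. The plan is to use the structure from Lemma~\ref{le:sub-P}: fix a projective $P$ of defect $-1$, and for a torsionfree divisible module $X$ (which is regular) obtain an exact sequence $0\to P_X\to X\to X/P_X\to 0$. Here $P_X\cong P^{(I)}$ and $X/P_X$ is torsion regular. Since $X$ is divisible, so is its quotient $X/P_X$; by Lemma~\ref{le:ext-divisible-reg}, $X/P_X$ is therefore a direct sum of Pr\"ufer modules. So every torsionfree divisible module is an extension of a direct sum of Pr\"ufer modules by a free-like module $P^{(I)}$.

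\textbf{Endofiniteness.} We bound $\Hom_\La(C,X)$ as an $\End_\La(X)$-module. Since torsionfree divisible modules form a definable subcategory ($\R^\perp$, Example~\ref{ex:definable}(3)), Proposition~\ref{pr:subgroup-fin-def} and Remark~\ref{re:quot-lattice} reduce the problem to acc and dcc on subgroups of finite definition. The dcc is already known, since $X$ is $\Sigma$-pure injective by Proposition~\ref{pr:divisible-pure-inj}. For the acc, dualize: $DX$ is again torsionfree and divisible by the equivalences above, and $\bfL(X)^\op\cong\bfL(DX)$. The dcc for $DX$ (again from Proposition~\ref{pr:divisible-pure-inj}) then gives the acc for $X$. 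Hence $X$ is endofinite by Proposition~\ref{pr:subgroup-fin-def}(2).

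\textbf{Uniqueness.} Let $G$ and $G'$ be indecomposable torsionfree divisible modules; both are endofinite. The plan has two steps.
\begin{enumerate}
\item Show $\Hom_\La(G,G')\neq 0$. By Lemma~\ref{le:sub-P}(1) there is an embedding $P\rightarrowtail G'$. Since $G$ is divisible with $\fri(G)=0$, Corollary~\ref{co:ext-divisible} makes the relevant $\Ext^1$ groups vanish. Using the sequence $0\to P_G\to G\to G/P_G\to 0$, we try to extend a map $P\to G'$ from one copy of $P$ inside $P_G$ to all of $G$: first extend over $P_G$ by mapping the other copies to zero, then over $G$ using $\Ext^1_\La(G/P_G,G')=0$.
\item Show this forces $G\cong G'$. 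Both modules have no self-extensions: $\Ext^1_\La(G,G)=0$ by Corollary~\ref{co:ext-divisible}. So Lemma~\ref{le:end-generic} gives that $\End_\La(G)$ is a division ring. The image of a nonzero map $f\colon G\to G'$ is divisible, being a quotient of a divisible module, and torsionfree, being a submodule of a torsionfree module. Its kernel is torsionfree, and pure-injective provided it is also divisible. If the kernel and cokernel are again torsionfree divisible, the sequences split, and indecomposability forces $f$ to be an isomorphism.
\end{enumerate}

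I expect the main obstacle to be controlling the kernel and cokernel of $f$. The category $\R^\perp$ is abelian and closed under extensions, since it is a perpendicular category of objects with projective dimension at most one. So $\Ker f$ and $\Coker f$ lie in $\R^\perp$, and $0\to\Ker f\to G\to\Im f\to 0$ splits because $\Ker f$ is divisible and $\Ext^1_\La(\Im f,\Ker f)=0$ by Corollary~\ref{co:ext-divisible}. Similarly $\Im f$ is a summand of $G'$, provided $\Ext^1_\La(\Coker f,\Im f)=0$. Indecomposability then gives $\Ker f=0$ and $\Coker f=0$.
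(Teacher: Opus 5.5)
Your proof is correct. The endofiniteness part is exactly the paper's argument: dcc comes from $\Sigma$-pure injectivity of $X$ (Proposition~\ref{pr:divisible-pure-inj}), and acc from the same fact for $DX$ via $\bfL(X)\cong\bfL(DX)^\op$. The detour through $P_X$, the Pr\"ufer decomposition of $X/P_X$ and definability is not needed there, since Proposition~\ref{pr:subgroup-fin-def}(2) applies directly.

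For uniqueness you take a genuinely different route. The paper proceeds as follows.
\begin{itemize}
\item It embeds $P$ into both $X$ and $Y$ with regular cokernels (Lemma~\ref{le:sub-P}(1)) and forms the pushout $E$.
\item It splits $X\rightarrowtail E$ and $Y\rightarrowtail E$ using Lemma~\ref{le:ext-divisible-reg}.
\item This gives $\phi\colon X\to Y$ and $\psi\colon Y\to X$ with $\psi\phi$ restricting to the inclusion on $P$. So $\psi\phi\neq 0$, and it is invertible because $\End_\La(X)$ is a division ring by Lemma~\ref{le:end-generic}.
\end{itemize}
Thus the paper's uniqueness argument relies on the endofiniteness just established.

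You instead produce a single nonzero map $G\to G'$. You then use that $\R^\perp$ is an exact abelian subcategory (the Geigle--Lenzing argument, valid because $\La$ is hereditary) in which every short exact sequence splits by Corollary~\ref{co:ext-divisible}. So any nonzero map between indecomposables is an isomorphism. Your hedge ``provided $\Ext^1_\La(\Coker f,\Im f)=0$'' is discharged exactly as for the kernel: $\Coker f\in\R^\perp$ is torsionfree, so $\fri(\Coker f)=0$, and $\Im f$ is divisible.

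Your route needs the kernel/cokernel closure of $\R^\perp$, which the paper never uses. In exchange, the uniqueness part needs neither endofiniteness nor Lemma~\ref{le:end-generic}, so your remark that $\End_\La(G)$ is a division ring is superfluous. It also shows directly that $\R^\perp$ is a semisimple abelian category, anticipating the later description $\R^\perp\simeq\Mod\Gamma$.

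Step~1 can be simplified. Extend the embedding $P\rightarrowtail G'$ along a mono $P\rightarrowtail G$ with regular cokernel, given by Lemma~\ref{le:sub-P}(1), using Lemma~\ref{le:ext-divisible-reg}. This avoids $P_G$ altogether.
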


\begin{proof}
Let $X$ be a torsionfree divisible $\Lambda$-module. By Proposition~\ref{pr:subgroup-fin-def} we need to show that
$\bfL(X)$ satisfies both acc and dcc. The module $X$ is $\Sigma$-pure injective by Proposition~\ref{pr:divisible-pure-inj}, so $\bfL(X)$ has dcc. Similarly the dual $DX$ is torsionfree divisible over $\Lambda^\op$, so $\bfL(X)\cong\bfL(DX)^\op$ has acc and we are done.

Let $X$ and $Y$ be indecomposable torsionfree divisible $\Lambda$-modules. Take a projective module $P$ of defect $-1$. Then Lemma~\ref{le:sub-P} gives monos $P\rightarrowtail X$ and $P\rightarrowtail Y$ with regular cokernels. Their pushout yields monos $X\rightarrowtail E$ and $Y\rightarrowtail E$, which split by Lemma~\ref{le:ext-divisible-reg}. We obtain maps $\phi\colon X\to Y$ and $\psi\colon Y\to X$ such that $\psi\phi$ is the identity on $P$, and therefore an isomorphism since
$\End_\Lambda(X)$ is a division ring by Lemma~\ref{le:end-generic}. The same holds for $\phi\psi$, and hence $X\cong Y$.
\end{proof}

The converse also holds provided the endofinite module has no finite length direct summands.

\begin{thm}\label{th:endofinite2}
Every endofinite $\Lambda$-module without finite length indecomposable direct summands is torsionfree divisible.
\end{thm}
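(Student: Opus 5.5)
Let $X$ be endofinite with no finite length indecomposable direct summands. We show separately that $\Hom_\La(R,X)=0$ and $\Hom_\La(X,R)=0$ for all $R\in\R$.

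\emph{Torsionfree.} Consider the pure exact sequence $0\to\frt(X)\to X\to\frf(X)\to0$. By Remark~\ref{re:quot-lattice}, $\frt(X)$ is endofinite, in particular $\Si$-pure injective, so the inclusion splits and $\frt(X)$ is a direct summand of $X$. It suffices to show $\frt(X)=0$. By Corollary~\ref{co:decomposition} and Lemma~\ref{le:torsion-indec}, if $\frt(X)\neq0$ it has an indecomposable summand that is of finite length or a Pr\"ufer module. The first is excluded by hypothesis. So it remains to show that a Pr\"ufer module $\bar S$ is not endofinite. We check that $\bfL(\bar S)$ fails acc: the submodules $\Ker\phi^i=S_{in}$ are kernels of endomorphisms, and $\Hom_\La(C,S_{in})=\{f\colon C\to\bar S\mid \phi^i f=0\}$ is a subgroup of finite definition (a kernel of a coherent functor evaluated via $\phi^i$). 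Equivalently, endofiniteness would force $\bar S$ to have finite length over $\End_\La(\bar S)$. This contradicts the strictly increasing chain of $\End$-submodules $S_{in}$, which are $\End$-stable because every endomorphism commutes with multiplication by $\phi$ on the socle filtration. The cleanest route is to note that each $S_{in}$ equals the sum of images of all maps $S_{in}\to\bar S$, i.e.\ the trace of $S_{in}$, which is a subgroup of finite definition. Hence $X$ is torsionfree.

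\emph{Divisible.} Apply the same reasoning to $DX$. Since $\bfL(DX)\cong\bfL(X)^\op$, the module $DX$ is endofinite over $\La^\op$. We also need that $DX$ has no finite length indecomposable summands. This is the condition $S(DX)=0$ for all simple coherent $S$ (Lemma~\ref{le:coherent}(6)), which is a definable condition. Using Lemma~\ref{le:coherent}(4), $S(DX)\cong DS^\vee(X)$. The hypothesis on $X$ says $T(X)=0$ for all simple coherent $T$ over $\La$, so $G(X)=0$ for every finite length coherent $G$. Thus the key step is that $S^\vee$ is of finite length whenever $S$ is simple. This holds because ${}^\vee$ is a duality of abelian categories (Lemma~\ref{le:coherent}(3)), so it sends simples to simples. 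Hence $S^\vee(X)=0$, and $DX$ has no finite length summands. By the first part $DX$ is torsionfree, so $X$ is divisible by the $k$-duality equivalences stated before Lemma~\ref{le:sub-P}.

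The main obstacle is the non-endofiniteness of Pr\"ufer modules, i.e.\ producing an infinite strictly ascending chain in $\bfL(\bar S)$. I would verify it via the chain $\Hom_\La(C,S_{in})\subseteq\Hom_\La(C,\bar S)$ for suitable $C\in\R_x$ of large length, realised as images $\Hom(S_{in},\bar S)\to$ composed with $C\to S_{in}$. Alternatively, I would use that a Pr\"ufer module is not finitely generated over the noetherian ring $k[t]_{(t)}$ acting through $\phi$.
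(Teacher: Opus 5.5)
Your proof is correct and follows the paper's argument: you split off $\frt(X)$ as a direct summand, use Lemma~\ref{le:torsion-indec} together with the non-endofiniteness of Pr\"ufer modules, and then pass to $DX$. Your isomorphism $S(DX)\cong DS^\vee(X)$ with $S^\vee$ simple is simply a direct version of the paper's remark that $D^2X$ lies in the definable subcategory of Example~\ref{ex:definable}(4).

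For the Pr\"ufer step, which the paper takes for granted, keep only the trace argument: by uniseriality every map $S_m\to\bar S$ has image some $S_j$ with $j\le m$, so the trace of $S_m$ is $S_m$, and this gives a strictly ascending chain in $\bfL(\bar S)$. Your other justifications do not hold up as stated. The claim that every endomorphism commutes with $\phi$ is unproved. The set $\{f\mid\phi^if=0\}$ is not of finite definition merely because it is a kernel. And failure of finite generation over $k[t]_{(t)}$ says nothing about length over $\End_\Lambda(\bar S)$.
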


\begin{proof}
Let $X$ be such an endofinite module. The torsion part $\frt(X)$ is a pure submodule, so endofinite and a direct summand of $X$ by Remark~\ref{re:quot-lattice}. If $\frt(X)$ is non-zero, then it admits an indecomposable direct summand which is either finite dimensional or a Prüfer module. As the Prüfer module is not endofinite we deduce that $\frt(X)=0$, so $X$ is torsionfree.

Next, the dual $DX$ is again endofinite, by Proposition~\ref{pr:subgroup-fin-def}. If $U$ is a finite dimensional direct summand of $DX$, then $DU$ is a finite dimensional direct summand of $D^2X$. The latter lies in the definable subcategory generated by $X$, so by Example ~\ref{ex:definable}(4) we must have $U=0$. By the first part we deduce that $DX$ is torsionfree, and hence that $X$ is divisible.
\end{proof}

\section{The generic module}

A module is called \emph{generic} provided it is indecomposable, endofinite, and of infinite length. From Theorems~\ref{th:endofinite} and \ref{th:endofinite2} we know that such a module over $\Lambda$ must be torsionfree divisible, and unique up to isomorphism.
  
\subsection*{Two constructions}

We offer two constructions of torsionfree divisible modules. 

Fix an indecomposable projective $\Lambda$-module $P$ of defect $-1$. For each simple object $S\in\R$ and $r\geq1$ there is a minimal universal coextension
\[ 0 \to P\to E_r \to S_r^{n} \to 0, \]
so with $\Hom_\Lambda(S_r,S_r^{n})\iso\Ext^1_\Lambda(S_r,P)$, where $n$ equals the dimension of $\Ext^1_\Lambda(S,P)$ over $\End_\Lambda(S)$. These form a filtered system, so their colimit is an exact sequence
\[ \xi_S \colon 0 \to P\to E \to \bar S^{n} \to 0. \]
Note that each $E_r$ is preprojective, so $E$ is torsionfree. The family of these sequences $\xi_S$, where $S$ runs through a representative set of simple objects in $\R$, determines an exact sequence
\[ \xi \colon 0 \to P \to Q \to \bigoplus\nolimits_S\bar S^n \to 0. \]
The following is a consequence of the construction of this sequence.

\begin{lem}\label{le:generic-construction}
The $\Lambda$-module $Q$ is generic.
\end{lem}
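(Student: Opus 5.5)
By Theorem~\ref{th:endofinite} it suffices to show that $Q$ is torsionfree divisible, nonzero and indecomposable. Then $Q$ is endofinite, and it has infinite length because it has no non-zero finite dimensional direct summands. The plan is to verify these properties in the order: torsionfree, divisible, indecomposable.

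\emph{Torsionfree.} $Q$ is the direct sum of the modules $E$ (one for each $S$) amalgamated over $P$, i.e.\ the filtered colimit of pushouts over finite sets of simples. Each finite stage is an extension of $P$ by a finite direct sum of the modules $S_r^{n}$, and it is the corresponding universal coextension. By the same argument as for $E_r$ it is preprojective: its universality gives $\Hom(R,-)=0$ for regular $R$. More precisely, I would check directly that $\Hom_\La(R,Q)=0$ for every indecomposable $R\in\R$. Apply $\Hom_\La(R,-)$ to $\xi$. Since $\Hom(R,P)=0$, a map $R\to Q$ is determined by its image in $\bigoplus\bar S^n$, and it lifts exactly when its image under the connecting map into $\Ext^1(R,P)$ vanishes. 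The universal property of the coextensions says precisely that the connecting map $\Hom(S_r,S_r^n)\to\Ext^1(S_r,P)$ is bijective. Since $R$ is uniserial in some tube and maps into $\bar S$ factor through some $S_r$, the connecting map is injective on $\Hom(R,\bigoplus\bar S^n)$, so $\Hom(R,Q)=0$. This step needs care when $R$ has a regular socle different from $S$, and it is the main obstacle.

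\emph{Divisible.} I would show $\Ext^1_\La(R,Q)=0$ for all $R\in\R$, which is equivalent to divisibility. Apply $\Hom(R,-)$ to $\xi$ to get
\[ \Hom(R,\textstyle\bigoplus\bar S^n)\to\Ext^1(R,P)\to\Ext^1(R,Q)\to\Ext^1(R,\textstyle\bigoplus\bar S^n). \]
The last term vanishes because Prüfer modules are divisible, being injective in $\bar\R$: for a regular simple $T$ one uses Lemma~\ref{le:ext-divisible-reg}, or directly the Auslander--Reiten formula together with $\Hom(\bar S,\R)=0$. So it suffices that the connecting map is surjective. By induction on regular length it is enough to treat $R=S_r$, where surjectivity is exactly the universal property $\Hom(S_r,S_r^n)\iso\Ext^1(S_r,P)$, compatibly with passing to the colimit.

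\emph{Indecomposable.} Since $Q$ is torsionfree divisible, it is endofinite, hence by Corollary~\ref{co:decomposition} a direct sum of copies of the generic module $G$. We have $Q\neq0$ since $P\subseteq Q$. To show there is only one copy, compare dimensions over the division ring $\End(G)$ (Lemma~\ref{le:end-generic}) via $\Hom_\La(P,-)$: take $\dim$ of $\Hom(P,Q)$ over $\End(Q)$, or more simply use that $Q/P$ is torsion. If $Q=G_1\oplus G_2$ with both summands nonzero, then the image of $P$ meets some summand trivially after projecting, so $G_i$ embeds into $Q/P$, which is torsion. But $G_i$ is torsionfree and nonzero, a contradiction. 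To make this precise, consider the projection $\pi_2\colon Q\to G_2$ composed with the inclusion $P\to Q$. If $\pi_2|_P$ is zero, then $G_2$ embeds in $Q/P$, contradiction. So both projections are nonzero on $P$. By Lemma~\ref{le:sub-P}(1) each map $P\to G_i$ is injective with regular cokernel. Then $Q/P$ surjects onto $G_1/P\oplus G_2$, which is not torsion, a contradiction. Hence $Q$ is indecomposable, and therefore generic.
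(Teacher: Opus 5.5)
There are genuine gaps. Your outline matches the paper's: the long exact sequence of $\Hom_\La(R,-)$ applied to $\xi$, then $Q/P$ torsion together with Lemma~\ref{le:sub-P}. The divisibility step is fine. Two other steps, however, are asserted rather than proved.

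\emph{Torsionfreeness.} You try to show that the connecting map $\Hom_\La(R,\bigoplus_{S'}\bar{S'}^{n})\to\Ext^1_\La(R,P)$ is injective for every indecomposable $R\in\R$. The universal property only controls the summand $\Hom_\La(S_r,\bar S^{n})=\Hom_\La(S_r,S_r^{n})$. A non-simple $R$ also maps to Pr\"ufer modules $\bar{S'}$ with $S'$ different from its regular socle, and you leave exactly this case open. Your remark that the finite stages are preprojective is the same unproved claim.

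The missing idea is d\'evissage to regular simples. For $S$ regular simple, $\Hom_\La(S,\bar{S'})=0$ when $S'\neq S$, because $\bar{S'}$ has regular socle $S'$. Hence $\Hom_\La(S,\bigoplus_{S'}\bar{S'}^{n})=\Hom_\La(S,S^{n})$, and the case $r=1$ of the universal property makes the connecting map bijective. Together with $\Hom_\La(S,P)=0$ this gives $\Hom_\La(S,Q)=0$. Left exactness of $\Hom_\La(-,Q)$ along a regular composition series then gives $\Hom_\La(\R,Q)=0$. This is precisely the paper's argument (``the same then holds for $\xi$'').

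\emph{Indecomposability, first case.} The case $\pi_2|_P=0$ is right, but you should say that $G_2$ is a \emph{direct summand} of $Q/P\cong G_1/P\oplus G_2$, not merely a submodule. Torsion modules are closed under quotients but not under submodules: every module embeds into an injective module, which is torsion.

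\emph{Indecomposability, second case.} The claimed surjection $Q/P\twoheadrightarrow G_1/P\oplus G_2$ is not justified. The evident maps only give surjections onto $G_1/\pi_1(P)$ and $G_2/\pi_2(P)$, which are torsion and yield no contradiction. To reach $G_2$ itself you must extend $\pi_2|_P$ along the mono $\pi_1|_P\colon P\to G_1$ to some $\psi\colon G_1\to G_2$. That requires $\Ext^1_\La(G_1/P,G_2)=0$, i.e.\ Lemma~\ref{le:ext-divisible-reg}, since $G_1/P$ is regular by Lemma~\ref{le:sub-P} and $G_2$ is divisible. Then $(g_1,g_2)\mapsto(\bar g_1,g_2-\psi g_1)$ gives the surjection.

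Equivalently, as in the paper, the sequence $0\to G_2\to Q/P\to G_1/P\to 0$ splits by Lemma~\ref{le:ext-divisible-reg}. So $G_2$ is both torsion and torsionfree, hence zero. Your first case supplies the nonvanishing of $P\to Q\to G_1$, which the paper uses tacitly.

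Finally, your parenthetical justifications that Pr\"ufer modules are divisible are circular. Lemma~\ref{le:ext-divisible-reg} assumes divisibility, and $\Hom_\La(\bar S,\R)=0$ \emph{is} divisibility. The correct reason is the one you also gave: $\bar S$ is injective in the extension-closed category $\bar\R$.
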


\begin{proof}
See \cite[5.2]{Ri1979}.  For each regular simple $S$ the exact sequence $\xi_S$ determines an isomorphism $\Hom_\Lambda(S,\bar S^{n})\iso\Ext^1_\Lambda(S,P)$. The same then holds for $\xi$. Now, as $P$ is torsionfree and $\bigoplus_S\bar S^{n}$ is divisible, $Q$ must be torsionfree divisible.

Let $Q=Q'\oplus Q''$ be a decomposition and $Q'\neq0$. Both summands are torsionfree divisible, so the composite $P\to Q\to Q'$ is a mono with regular cokernel $C$ by Lemma~\ref{le:sub-P}. We obtain an exact sequence $0 \to Q'' \to \bigoplus\nolimits_S\bar S^n \to C\to 0$ which splits by Lemma~\ref{le:ext-divisible-reg}. As $Q''$ is torsionfree divisible, it must be zero. Hence $Q$ is indecomposable.
\end{proof}

For the second construction we recall that there is a lattice isomorphism $\bfL(X^I)\cong\bfL(X)$. As each Prüfer module $\bar S$ is $\Sigma$-pure injective, Lemma~\ref{le:prufer}, so too is $\bar S^I$ by Proposition~\ref{pr:subgroup-fin-def}, and hence the pure epimorphism $\bar S^I\twoheadrightarrow\frf(\bar S^I)$ splits by Remark~\ref{re:quot-lattice}.

\begin{lem}\label{le:generic-construction2} 
The module $\frf(\bar S^I)$ is torsionfree divisible, and it is zero if and only if $I$ is finite.
\end{lem}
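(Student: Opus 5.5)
Since $\bar S^I\twoheadrightarrow\frf(\bar S^I)$ splits, $\frf(\bar S^I)$ is a direct summand of $\bar S^I$, and by definition it is torsionfree. So the first part of the statement reduces to showing that $\bar S^I$ is divisible, i.e.\ $\Hom_\La(\bar S^I,R)=0$ for all $R\in\R$. By Example~\ref{ex:definable}(2) the divisible modules form a definable subcategory, defined by the vanishing of the coherent functors $D(-\otimes_\La DR)$. Coherent functors commute with products by Lemma~\ref{le:coherent}(1), so it suffices to show that $\bar S$ itself is divisible.

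For this, I would use Lemma~\ref{le:ext-divisible-reg}: $\bar S$ is an injective object of $\bar\R$, so it is divisible. Concretely, suppose $f\colon\bar S\to R$ is nonzero with $R\in\R$. Its image is a finite length quotient of $\bar S$ and lies in $\R$, because $\bar S$ is regular and the image is a submodule of $R\in\R$. The epimorphism $\phi$ from the paper shows that $\bar S$ has no nonzero finite length regular quotients. Indeed, $\bar S=\phi(\bar S)$ is divisible by $\phi$, so any finite length quotient $\bar S/U$ is again divisible by $\phi$. Since $\ker\phi^i=S_{in}$ and $\bar S=\bigcup_i S_{in}$, the image of $\bar S$ in $R$ is the image of some $S_{in}$. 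Then $f\phi^{i}$ kills $S_{in}$, and applying $f=f'\phi^i$ gives $\operatorname{Im}f=\operatorname{Im}f'$. Iterating this in a finite length module forces the image to be zero. Alternatively, one could use the Auslander--Reiten formula: $\Ext^1(R',\bar S)\cong D\Hom(\bar S,\tau R')$ for $R'\in\R$, so divisibility of $\bar S$ is equivalent to $\Ext^1_\La(\R,\bar S)=0$, which follows from the injectivity of $\bar S$ in $\bar\R$ via $\Ext^1(R',\bar S)=\Ext^1_{\bar\R}(R',\bar S)$, since $\R$ is closed under extensions.

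For the finiteness statement, if $I$ is finite then $\bar S^I$ is a finite direct sum of Prüfer modules, hence torsion, so $\frf(\bar S^I)=0$. For the converse, I expect this to be the main obstacle. Take $I$ infinite; it suffices to treat $I=\bbN$, since $\bar S^\bbN$ is a direct summand of $\bar S^I$ and $\frf$ commutes with direct sums. I would exhibit a nonzero map $P\to\bar S^\bbN$ whose image is not torsion. Using $\dim\Hom_\La(P,S_r)\to\infty$ (defect $-1$, and $\Hom(P,-)$ is additive with $\dim\Hom(P,S_r)$ growing linearly in $r$), choose $g_r\colon P\to S_r\subseteq\bar S$ such that the maps $\phi^{m}g_r$ stay nonzero in a controlled way, and form $g=(g_r)\colon P\to\bar S^\bbN$. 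Suppose the image of $g$ were torsion. It is finite dimensional, so it would lie in $\R\vee\I$. Since $\bar S^\bbN$ is regular after the torsion part is controlled, its image would lie in a finite length regular submodule. A finite length regular submodule of $\bar S^\bbN$ is killed by some fixed power $\phi^m$ (its Loewy length in $\R$ is bounded). It would therefore lie in $\ker(\phi^m)^\bbN=S_{mn}^\bbN$, contradicting the fact that $g_r$ was chosen with $\operatorname{Im}g_r\not\subseteq S_{mn}$ for $r$ large. Thus $\frt(\bar S^\bbN)\neq\bar S^\bbN$ and $\frf(\bar S^\bbN)\neq0$. The delicate point is justifying that the image of $P$ is regular torsion rather than preinjective. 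I would handle this using $\fri(\bar S^\bbN)=0$, which holds because $\Hom(\I,\bar S)=0$ and products preserve this vanishing.
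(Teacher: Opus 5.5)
\textbf{Gap in the infinite case.} The first part of your proposal is essentially right. The argument for $I$ infinite, however, has a genuine logical gap. You refute ``the image of $g$ is torsion'' and then conclude $\frt(\bar S^{\bbN})\neq\bar S^{\bbN}$. That inference is invalid. The torsion class $\overline{\R\vee\I}$ is closed under quotients but not under submodules, so a finite dimensional submodule of a torsion module may well be preprojective.

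For instance, for the Kronecker algebra the simple projective module $P$ has defect $-1$. Hence every non-zero $g\colon P\to M$ has $\Im g\cong P$, which is never torsion, whatever $M$ is. Already for $M=\bar S$ (which is torsion) your reasoning would ``prove'' $\frf(\bar S)\neq 0$. For the same reason your ``delicate point'' is misidentified. The issue is not regular versus preinjective; it is that $\Im g$ may be preprojective while still lying inside the torsion part.

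\textbf{How to repair it.} What has to be excluded is $\Im g\subseteq\frt(\bar S^{\bbN})$.
\begin{enumerate}
\item $\frt(\bar S^{\bbN})$ is a filtered colimit of modules in $\R\vee\I$, and $P$ is finitely presented. So $\Im g\subseteq\frt(\bar S^{\bbN})$ would make $g$ factor through some $T\in\R\vee\I$.
\item Then $\Im g$ would lie in the finite dimensional torsion submodule $U=\Im(T\to\bar S^{\bbN})$.
\item Since $\Hom_\La(\I,\bar S)=0$, we get $U\in\R$.
\item Each coordinate projection of $U$ is a regular quotient of $U$, hence equal to some $S_j$ with $j\le\ell_{\R}(U)$.
\end{enumerate}
Your bounded-length argument must be applied to $U$, not to $\Im g$; then it does give the contradiction. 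This needs $g_r\in\Hom_\La(P,S_r)\setminus\Hom_\La(P,S_{r'})$ for some $r'=r'(r)\to\infty$. Such a choice is possible because $\Hom_\La(P,S_{r'})\subseteq\Hom_\La(P,S_r)$ for $r'\le r$ and $\dim_k\Hom_\La(P,S_r)$ is unbounded, but you left this step vague.

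The repaired argument is exactly the paper's proof, except that the paper uses $\La$ in place of $P$. It takes an element $(x_r)$ with $x_r\in S_r\setminus S_{r-1}$, which avoids the growth estimate for $\Hom_\La(P,S_r)$ altogether.

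\textbf{Minor points in the first part.}
\begin{itemize}
\item The coherent functor cutting out divisibility is $-\otimes_\La DR$, not $D(-\otimes_\La DR)$.
\item Lemma~\ref{le:ext-divisible-reg} gives the converse of what you cite it for.
\item Your $\phi$-argument does not work as written: the factorisation $f=f'\phi^i$ requires $f(S_{in})=0$, which you have not shown.
\end{itemize}
Your Auslander--Reiten alternative is sound: an extension of $R'\in\R$ by $\bar S$ is torsion regular, hence lies in $\bar\R$, where $\bar S$ is injective. This is what the paper uses, via $\Ext^1_\La(R,\bar S^I)\cong\Ext^1_\La(R,\bar S)^I$.
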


\begin{proof}
For $R\in\R$ we have $\Ext^1_\Lambda(R,\bar S^I)\cong\Ext^1_\Lambda(R,\bar S)^I=0$, so $\bar S^I$ is divisible. It follows that $\frf(\bar S^I)$ is torsionfree divisible. If $I$ is infinite, then there is an inclusion $\bar S^{\bbN}\subseteq\bar S^I$, and any element $(x_n)$ with $x_n\in S_n-S_{n-1}$ lies in $\bar{S}^I$, but not in its torsion part $\frt(\bar{S}^I)$. Therefore $\frf(\bar{S}^I)$ is not zero.
\end{proof}

\subsection*{The category of torsionfree divisible modules}

The category of torsionfree divisible modules consists of semisimple objects, and the generic module is the unique simple object in this category.

\begin{thm}
There is up to isomorphism a unique generic $\Lambda$-module $Q$. Its endomorphism ring $\Gamma=\End_\Lambda(Q)$ is a division ring and the assignment $X\mapsto X\otimes_\Gamma Q$ yields an equivalence
\[ \Mod\Gamma \iso \R^\perp. \]
\end{thm}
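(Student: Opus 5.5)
Existence and uniqueness of the generic module are already in place. Lemma~\ref{le:generic-construction} produces the generic module $Q$, and Theorems~\ref{th:endofinite} and \ref{th:endofinite2} show that any generic module is torsionfree divisible, hence isomorphic to $Q$. Since $Q$ is indecomposable endofinite and torsionfree divisible, and $\Ext^1_\La(Q,Q)=0$ by Corollary~\ref{co:ext-divisible} (note $\fri(Q)=0$ as $Q$ is regular), Lemma~\ref{le:end-generic} shows that $\Gamma=\End_\La(Q)$ is a division ring. So the remaining task is the equivalence $\Mod\Gamma\iso\R^\perp$, where $\R^\perp$ is the category of torsionfree divisible modules.

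First I show that $X\otimes_\Gamma Q$ lies in $\R^\perp$. As $\Gamma$ is a division ring, every right $\Gamma$-module is free, $X\cong\Gamma^{(I)}$, so $X\otimes_\Gamma Q\cong Q^{(I)}$. The category $\R^\perp$ is definable by Example~\ref{ex:definable}(3), hence closed under direct sums, so $Q^{(I)}\in\R^\perp$.

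Next I show the functor is fully faithful. Its right adjoint is $\Hom_\La(Q,-)$, so it suffices that the unit $X\to\Hom_\La(Q,X\otimes_\Gamma Q)$ is an isomorphism for $X=\Gamma^{(I)}$. This reduces to the natural map $\Gamma^{(I)}\to\Hom_\La(Q,Q^{(I)})$ being bijective. Here I would use that $Q$ is endofinite, hence finitely generated over $\Gamma$ (i.e.\ of finite length as a $\Gamma$-module). Then any $f\colon Q\to Q^{(I)}$ is determined by the images of finitely many $\Gamma$-generators, and since $f$ is $\Gamma$-linear, where $\Gamma$ acts diagonally on $Q^{(I)}$, it factors through $Q^{(J)}$ for some finite $J\subseteq I$. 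So $\Hom_\La(Q,-)$ commutes with these direct sums and the unit is bijective.

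Finally, essential surjectivity. Let $Y\in\R^\perp$. By Theorem~\ref{th:endofinite} $Y$ is endofinite, in particular $\Sigma$-pure injective by Proposition~\ref{pr:subgroup-fin-def}. Corollary~\ref{co:decomposition} then gives a decomposition $Y=\bigoplus_i Y_i$ into indecomposables. Each $Y_i$ is a direct summand of $Y$, hence still torsionfree divisible (both conditions are Hom/Ext-vanishing conditions, which pass to summands). By Theorem~\ref{th:endofinite} every indecomposable torsionfree divisible module is isomorphic to $Q$, so $Y\cong Q^{(I)}\cong\Gamma^{(I)}\otimes_\Gamma Q$.

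I expect the main subtlety to be the full faithfulness. It requires verifying carefully that endofiniteness gives the compactness needed to commute $\Hom_\La(Q,-)$ with direct sums of copies of $Q$. If needed, one can instead use that $\bfL(Q)$ has acc, so that $\Hom_\La(\La,Q)$ is finitely generated over $\Gamma$, to obtain the same factorisation.
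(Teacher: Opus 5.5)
Apart from one step, your argument is the paper's proof. Existence comes from Lemma~\ref{le:generic-construction} and uniqueness from Theorems~\ref{th:endofinite} and~\ref{th:endofinite2}. The division ring property comes from Lemma~\ref{le:end-generic}, and you rightly make explicit the vanishing $\Ext^1_\La(Q,Q)=0$ that the paper leaves implicit. Essential surjectivity comes from decomposing an endofinite $Y\in\R^\perp$ into copies of $Q$ via Corollary~\ref{co:decomposition}. The paper then simply asserts that $-\otimes_\Gamma Q$ is an equivalence with quasi-inverse $\Hom_\La(Q,-)$.

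You try to justify the one non-formal input, namely $\Hom_\La(Q,Q^{(I)})=\Gamma^{(I)}$, and that justification is wrong. A $\La$-linear map $f\colon Q\to Q^{(I)}$ is in general \emph{not} $\Gamma$-linear for the diagonal action. Its components $f_i=\pi_i f$ are arbitrary elements of $\Gamma$, and $\Gamma$-linearity would mean $f_i\gamma=\gamma f_i$ for all $\gamma\in\Gamma$. This already fails when $I$ is a singleton and $f_1$ is non-central. Since $k$ is an arbitrary field, $\Gamma$ need not be commutative: for $\La=\mathbb{H}\otimes_{\mathbb{R}}\mathbb{R}K$, with $K$ the Kronecker quiver, one gets $\Gamma\cong\mathbb{H}\otimes_{\mathbb{R}}\mathbb{R}(X)$. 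Your fallback via acc on $\bfL(Q)$ has the same defect. Finite generation of $Q$ over $\Gamma$ only controls $f$ on $\Gamma$-multiples of the generators if $f$ commutes with the $\Gamma$-action.

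The repair is immediate and needs only that $\Gamma$ is a division ring, not endofiniteness. Each nonzero $f_i\in\Gamma$ is an automorphism of $Q$. So for any fixed $0\neq q\in Q$ you get $\{i\mid f_i\neq 0\}=\{i\mid f_i(q)\neq 0\}$, and this set is finite because $f(q)\in Q^{(I)}$. Hence the natural map $\Gamma^{(I)}\to\Hom_\La(Q,Q^{(I)})$ is bijective. It follows that the unit is an isomorphism on every right $\Gamma$-module, since all of them are free. Together with your essential surjectivity argument, this gives the equivalence $\Mod\Gamma\to\R^\perp$.
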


\begin{proof}
From Lemma~\ref{le:generic-construction} or Lemma~\ref{le:generic-construction2} we know that there exists an indecomposable torsionfree divisible $\Lambda$-module $Q$. Up to isomorphism this is the unique generic module, by Theorems~\ref{th:endofinite} and \ref{th:endofinite2}, and its endomorphism ring $\Gamma$ is a division ring by Lemma~\ref{le:end-generic}. Now every $X\in\R^\perp$ is a direct sum of copies of $Q$. For, $X$ is endofinite by Theorem~\ref{th:endofinite}, so admits a decomposition into indecomposables by Corollary~\ref{co:decomposition}, and each of these must be generic, so isomorphic to $Q$. It follows that $-\otimes_\Gamma Q$ is an equivalence, with quasi-inverse $\Hom_\Lambda(Q,-)$.
\end{proof}

There is a more general approach to the existence of $Q$. In fact,
every infinite product of copies of a Pr\"ufer module contains $Q$ as
a direct summand. This can be deduced from a general result: if
$M$ is an indecomposable module which admits a surjective, locally
nilpotent endomorphism with kernel of finite length, then the
countable product $M^{\mathbb N}$ has a direct summand which is a
generic module \cite{K1,endofin}.  A construction and examples of
modules $M$ with these properties are given in \cite{ladder}.

A further point of view is provided by work of Cohn and Schofield
\cite{Scho}. There is a ring epimorphism $\lambda\colon\La\to\La_\R$,
called the \emph{universal localisation} of $\La$ at $\R$, which
inverts the projective resolutions of the simple regular modules and
is universal with this property. It is easy to see that $\Mod\La_\R$
is equivalent to the category $\R^\perp$ of torsionfree divisible
modules. Moreover, it is shown in \cite{CB} that $\La_\R$ is a simple
artinian ring whose unique simple module is isomorphic to $Q$. Note
that $\Tor_1^\La(\La_\R,\La_\R)=0$, so the ring epimorphism $\lambda$
is \emph{pseudo-flat}. Since $\La$ is hereditary also the higher
Tor-groups vanish, that is, $\lambda$ is a \emph{homological ring
  epimorphism}.

In fact, this is a special instance of a more general
phenomenon. Recall that a module is called a \emph{brick} if its
endomorphism ring is a division ring, and it is called a \emph{stone}
if it is an endofinite brick without self extensions.

\begin{prop}[\cite{R4}]
  For any finite dimensional algebra $A$, there is a
  one-to-one-correspondence between isomorphism classes of endofinite
  bricks over $A$ and equivalence classes of ring epimorphisms from
  $A$ to simple artinian rings, which restricts to a bijection between
  stones and pseudoflat ring epimorphisms.
\end{prop}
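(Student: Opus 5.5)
The plan is to pass through the subcategory $\Add B$ and the Gabriel--de la Pe\~na correspondence. That correspondence says ring epimorphisms $A\to S$, up to equivalence, correspond to full subcategories of $\Mod A$ closed under products, coproducts, kernels and cokernels. The image of such a subcategory is the essential image of restriction $\Mod S\to\Mod A$, which is fully faithful exactly because $A\to S$ is an epimorphism.

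\emph{From epimorphisms to bricks.} Let $\lambda\colon A\to S$ be a ring epimorphism with $S$ simple artinian, say $S\cong M_n(\Delta)$ for a division ring $\Delta$, and let $M$ be the unique simple $S$-module. Restriction is fully faithful, so $\End_A(M)=\End_S(M)\cong\Delta$ (up to $\op$) is a division ring, and $M$ is a brick. As a module over this division ring $M$ has dimension $n$, so $M$ is endofinite. Equivalent epimorphisms give isomorphic restricted simples, so the map is well defined.

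\emph{From bricks to epimorphisms.} Let $B$ be an endofinite brick with $\Delta=\End_A(B)$, and consider $\C=\Add B$. By Corollary~\ref{co:endofinite}, $\C$ is definable and consists of the direct sums of copies of $B$, so it is closed under products and coproducts.

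The key step, and the one I expect to be the main obstacle, is closure under kernels and cokernels. For this I would show that $\Hom_A(B,-)$ restricts to an equivalence $\C\iso\Mod\Delta$.
\begin{enumerate}
\item Since $B$ is endofinite, $\Hom_A(B,B^{(J)})=\Delta^{(J)}$. I would check this with the finite-length argument of Proposition~\ref{pr:subgroup-fin-def}, comparing $B^{(J)}\subseteq B^J$ and using that $\Hom_A(B,B^J)=\Delta^J$ while the image of a map from $B$ is determined by finitely many coordinates, thanks to acc on finitely definable subgroups.
\item Hence every morphism $B^{(I)}\to B^{(J)}$ is given by a column-finite matrix over $\Delta$, so it is a map of $\Delta$-vector spaces tensored with $B$.
\item Such a map is a split map between free $\Delta$-modules. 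Its kernel and cokernel in $\Mod A$ are therefore $V\otimes_\Delta B$ for the corresponding $\Delta$-spaces $V$, and so lie in $\C$.
\end{enumerate}
Gabriel--de la Pe\~na then produces a ring epimorphism $A\to S$ with $\Mod S\simeq\C\simeq\Mod\Delta$. Hence $S$ is Morita equivalent to $\Delta$, and $S$ is simple artinian. Concretely, one expects $S\cong\End_\Delta(B)$ when $B$ is viewed as a finite-dimensional $\Delta$-space, the image of $A$ being "dense".

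\emph{Mutually inverse.} Starting from $B$, the simple $S$-module restricts to the unique indecomposable in $\C$, namely $B$. Starting from $\lambda$, the subcategory $\Add M$ is exactly the essential image of $\Mod S$, since every $S$-module is a direct sum of copies of $M$. Uniqueness in Gabriel--de la Pe\~na then returns $\lambda$ up to equivalence.

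\emph{Stones and pseudoflatness.} By Geigle--Lenzing, $\Tor_1^A(S,S)=0$ if and only if $\Ext^1_S(X,Y)\to\Ext^1_A(X,Y)$ is an isomorphism for all $S$-modules $X,Y$. Since $S$ is semisimple, the left side vanishes. So pseudoflatness is equivalent to $\Ext^1_A(M,M)=0$ (which passes to direct sums and products of copies of $M$ by endofiniteness), that is, to $M$ being a stone.
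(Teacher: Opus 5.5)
Your proposal is correct, but it reaches the bijection by a different route from the paper.

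\textbf{The paper's route.} The paper works with explicit rings. To an endofinite brick $M$ with $D=\End_A(M)$ it assigns the action map $A\to\End_D(M)$. To an epimorphism $A\to B$ with $B$ simple artinian it assigns the simple $B$-module, restricted to $A$. It then treats the bijection as evident: it verifies neither that $A\to\End_D(M)$ is an epimorphism nor that the two assignments are mutually inverse. For stones it uses Schofield's criterion, which says $\Tor_1^A(B,B)=0$ iff the image of $\Mod B$ is closed under extensions.

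\textbf{Your route.} You pass through the Gabriel--de la Pe\~na classification by bireflective subcategories. Once you check that $\Add B$ is closed under products (Corollary~\ref{co:endofinite}) and under kernels and cokernels (your matrix argument), three things come for free: the epimorphism property, the fact that the target is simple artinian, and the bijection. Your Geigle--Lenzing criterion is essentially the same input as Schofield's. You also supply a point the paper leaves implicit: $\Ext^1_A(M,M)=0$ propagates to $\Ext^1_A(M,M^{(J)})=0$, because by endofiniteness $M^{(J)}$ is a direct summand of $M^J$.

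\textbf{What each approach buys.} Your approach gives a complete verification of the correspondence. What it gives up is the explicit target ring, which you only say you "expect" to be $\End_\Delta(B)$. This is easy to recover. $S$ is simple, so it acts faithfully on its simple module $B$, and $\End_S(B)=\Delta$. The density theorem then gives $S\cong\End_\Delta(B)$, and under this isomorphism $A\to S$ becomes the paper's action map.

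\textbf{Two small remarks.}
\begin{enumerate}
\item Step (1) needs no chain conditions. Every non-zero element of $\Delta$ is a monomorphism. So a map $B\to B^{(J)}\subseteq B^J$ with infinitely many non-zero components would send any $b\neq 0$ outside $B^{(J)}$.
\item With (1) in hand you can verify the paper's construction directly. Restriction along $A\to\End_\Delta(B)$ is fully faithful, because both $\Hom_A(B^{(I)},B^{(J)})$ and the $\End_\Delta(B)$-linear maps equal $\prod_I\Delta^{(J)}$. For the second of these one uses the double centraliser property $\End_{\End_\Delta(B)}(B)=\Delta$, which holds since $\dim_\Delta B<\infty$. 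Hence $A\to\End_\Delta(B)$ is an epimorphism. This shortcut bypasses both Gabriel--de la Pe\~na and the kernel/cokernel computation.
\end{enumerate}
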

Here two ring epimorphisms $\lambda_i\colon A\to B_i$, $i=1,2$, are
said to be \emph{equivalent} if there is a ring isomorphism $\mu$ such
that $\lambda_1=\mu\lambda_2$.
\begin{proof}
  To every endofinite brick $M$ over $A$ one can associate a ring
  epimorphism $\lambda\colon A\to B$  by
  taking $B\coloneqq\End_D(M)$ where $D=\End_A(M)$, and by defining
  $\lambda(a)$ to be right multiplication by the element $a$ on
  $M$.

  Conversely, if $\lambda\colon A\to B$ is a ring epimorphism and $B$
  simple artinian, then for some $d\ge 1$, $B$ is the full $d\times d$ matrix ring over
  some division ring $D$, and there
  is a unique simple $B$-module $M$ up to isomorphism, which is
  isomorphic to $D^d$ as a $D$-module. By regarding $M$ as an
  $A$-module via $\lambda$ we obtain an endofinite brick in $\Mod{A}$.

  This yields the stated one-to-one-correspondence.  That stones
  correspond to pseudo-flat ring epimorphisms under this bijection
  follows from \cite[Theorem 4.8]{Scho}, because $M$ is a stone if and
  only if the image of $\Mod B$ in $\Mod A$ via restriction of
  scalars is closed under extensions.
\end{proof}

\begin{rem}
  (1) For hereditary algebras there is a further
  one-to-one correspondence between stones and indivisible Schur
  roots. We refer to \cite[3.3]{CB2} for details.

  (2) There are many large bricks over $\La$. An infinite semibrick
  (i.e.~a collection of Hom-orthogonal bricks) consisting of infinite
  length submodules of the generic module is constructed in
  \cite[6.9]{Ri1979}. This is used to embed the module category of a
  wild hereditary algebra as a full exact abelian subcategory into
  $\Mod\La$. When $\La$ is the Kronecker algebra a more direct
  construction of such a semibrick is provided in \cite{tamewild}.  It
  leads to a full exact embedding into $\Mod\La$ of the (wild)
  category of representations of the $3$-Kronecker quiver.
\end{rem}

\section{Global results}

The results in the previous sections yield some insight into the
global structure of the category of $\La$-modules. In this section we
make this more explicit, using the notion of a cotorsion pair. In particular, we
discuss which properties are characteristic for $\La$ to be of tame
representation type.  In fact, some of the results in this section
hold true more generally for canonical algebras, as shown in
\cite{RR}. Then we complete the classification of the pure-injective $\La$-modules.
This requires a refinement of the $k$-duality between left and right
$\Lambda$-modules which is known as
elementary duality. 

We start out with an immediate consequence of
Proposition~\ref{pr:divisible-pure-inj}.

\begin{prop}\label{pr:preinjectives-split}
For every $\La$-module $M$ the inclusion $\fri(M)\to M$ splits.
\end{prop}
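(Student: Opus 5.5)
The plan is to realise $\fri(M)$ as a divisible module and then invoke Corollary~\ref{co:ext-divisible}. The paper calls this an immediate consequence of Proposition~\ref{pr:divisible-pure-inj}, so the argument should be short.

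First I check that $\fri(M)$ is divisible. By Proposition~\ref{pr:preinj}, $\fri(M)$ is a direct sum of finite dimensional indecomposable preinjective modules. For $I\in\I$ and $R\in\R$ we have $\Hom_\La(I,R)=0$, since $(\I,\P\vee\R)$ is a torsion pair in $\mod\La$. For an arbitrary direct sum $\bigoplus_j I_j$ one has
\[\Hom_\La\bigl(\textstyle\bigoplus_j I_j,R\bigr)=\prod_j\Hom_\La(I_j,R)=0.\]
Hence $\fri(M)$ is divisible, and by Proposition~\ref{pr:divisible-pure-inj} it is $\Si$-pure injective.

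Next I consider the pure exact sequence $0\to\fri(M)\to M\to M/\fri(M)\to 0$; purity is part of Proposition~\ref{pr:preinj}. Since $\fri(M)$ is pure-injective, a pure monomorphism out of it splits, and this settles the statement.

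Alternatively, I can use Corollary~\ref{co:ext-divisible}. This needs $\fri(M/\fri(M))=0$, which follows from maximality: a preinjective submodule $U/\fri(M)$ of the quotient gives an extension $U$ of preinjectives. Preinjective modules are closed under extensions because $\bar\I$ is a torsion class, by the Lemma on torsion pairs, with $\I$ a torsion class in $\mod\La$. So $U=\fri(M)$ and $\Ext^1$ vanishes. I would use the first route, since it is simpler. The only point needing care is divisibility of infinite direct sums, which the product formula above handles.
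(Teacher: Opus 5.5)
Your first route is the paper's argument: the canonical sequence $0\to\fri(M)\to M\to M/\fri(M)\to 0$ is pure exact, and $\fri(M)$ is divisible, hence pure-injective by Proposition~\ref{pr:divisible-pure-inj}, so the sequence splits. Your alternative via Corollary~\ref{co:ext-divisible} is also valid, and there $\fri(M/\fri(M))=0$ follows more directly from $M/\fri(M)\in\overline{\P\vee\R}$.
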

\begin{proof}
  The canonical sequence $0\to \fri(M)\to M\to M/\fri(M)\to 0$ is
  pure exact. The preinjective module $\fri(M)$ is clearly divisible
  and therefore pure-injective. Thus the sequence is split exact.
\end{proof}

\subsection*{A trisection}

Recall that two classes of modules $\X,\Y \subseteq \Mod\La$
form a \emph{cotorsion pair} $(\X,\Y)$ if
\[\X = \{X\in\Mod\La\mid\Ext^1_\La(X,Y)=0\text{ for all
  }Y\in\Y\}\]
and
\[\Y = \{Y\in\Mod\La\mid\Ext^1_\La(X,Y)=0\text{ for all }X\in\X\}.\]
The pair is \emph{complete} if for all $M\in\Mod\La$ there exist short exact
sequences
\[0\to M \xrightarrow{f} Y^M \to X^M\to 0\qquad\text{and}\qquad
0\to Y_M\to X_M\xrightarrow{g} M\to 0\]
such that $X_M,X^M\in\X$ and $Y_M,Y^M\in \Y$. In \cite{ET} it is proven that every set of
modules $\X_0$ \emph{generates} a complete cotorsion pair
$(\X,\Y)$ where
\[\Y = \{Y\in\Mod\La\mid\Ext^1_\La(X,Y)=0\text{ for all }X\in\X_0\}.\]
Moreover, if $\X$ is closed under filtered colimits, then one knows
from \cite{E} that $f$ and $g$ can be chosen such that $f$ is  left
minimal and $g$ is right
minimal. In other words, $f$ is a $\Y$-envelope, $g$ is a $\X$-cover,
and these approximation sequences are uniquely determined up to
isomorphism.

Let us consider the following subcategories.
\begin{align*}
  \C&\coloneqq\{X\in\Mod\La\mid\Hom_\Lambda(\I,X)=0\}\\
  \D&\coloneqq\{X\in\Mod\La\mid\Hom_\Lambda(X,\R)=0\}\\
  \E&\coloneqq\{X\in\Mod\La\mid\Hom_\Lambda(\D,X)=0\}\\
  \omega&\coloneqq\C\cap\D
\end{align*}
Then we have $\C =\overline{\P\vee\R}$ and the objects in $\D$ are
precisely the divisible modules. The modules in $\E$ are called
\emph{reduced}. The notation reflects the fact that the category of
divisible modules fits into a cotorsion pair  $(\C,\D)$ and a torsion
pair  $(\D,\E)$, as we now explain.

\begin{prop} 
  The pair $(\C,\D)$ is a complete cotorsion pair that is generated by the
  class $\R$ of finite dimensional regular modules. Moreover, there
  are the following approximation sequences.
\begin{enumerate}
\item For each $M\in\C$ there exists a short exact sequence
\[ 0\to M\xrightarrow{f^\omega} D^M\to C^M\to 0\]  such that
$f^\omega$ is a $\D$-envelope and $D^M,C^M\in \omega$.
\item For each $M\in\D$ there exists a short exact sequence
\[ 0\to D_M \to C_M\xrightarrow{g_\omega} M\to 0\] such that
$g_\omega$
is a $\C$-cover and $C_M,D_M\in\omega$.
\end{enumerate}
\end{prop}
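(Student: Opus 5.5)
First I will identify the cotorsion pair generated by $\R$. By \cite{ET} the set $\R$ generates a complete cotorsion pair $(\X,\Y)$ with $\Y=\{Y\mid\Ext^1_\La(R,Y)=0 \text{ for all } R\in\R\}$, and by the Auslander--Reiten formula $\Y$ is exactly the class $\D$ of divisible modules. It remains to show $\X=\C$. For $\C\subseteq\X$: a module $X\in\C=\overline{\P\vee\R}$ satisfies $\Hom_\La(\I,X)=0$, hence $\fri(X)=0$, and Corollary~\ref{co:ext-divisible} gives $\Ext^1_\La(X,Y)=0$ for every divisible $Y$. For $\X\subseteq\C$: take $X\in\X$. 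Since every finite dimensional preinjective is divisible, $\Ext^1_\La(X,I)=0$ for all $I\in\I$. If $\Hom_\La(I,X)\neq0$ for some $I\in\I$, then the inclusion $\fri(X)\to X$ splits by Proposition~\ref{pr:preinjectives-split}, so some indecomposable $I\in\I$ is a direct summand of $X$ and $\Ext^1_\La(I,D)=0$ for all divisible $D$. But $\Ext^1_\La(I,\tau I)\cong D\End_\La(I)\neq0$, and $\tau I$ is preinjective, hence divisible, whenever it is nonzero. Indecomposable projective-injectives do not occur because $\La$ is tame. So $\fri(X)=0$, giving $X\in\C$.

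Next, the approximation sequences. Because $\C=\overline{\P\vee\R}$ is closed under filtered colimits, \cite{E} provides a minimal $\D$-envelope and a minimal $\C$-cover. For (1), let $M\in\C$ and take the special preenvelope $0\to M\xrightarrow{f} D^M\to C^M\to0$ with $D^M\in\D$ and $C^M\in\C$; choosing a minimal one gives $f^\omega$. Since $\C$ is closed under extensions, as it is the left half of a cotorsion pair, $D^M\in\C$, so $D^M\in\omega$. It remains to show $C^M\in\D$. Since $D^M$ is divisible and divisibility is closed under quotients (because $\Ext^1_\La(R,-)$ is right exact, $\La$ being hereditary), $C^M$ is divisible. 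Therefore $C^M\in\omega$.

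Part (2) is dual. For $M\in\D$ take the minimal $\C$-cover $0\to D_M\to C_M\xrightarrow{g_\omega}M\to0$ with $D_M\in\D$ and $C_M\in\C$. Then $C_M$ is an extension of divisibles, hence divisible, so $C_M\in\omega$. Also $D_M\subseteq C_M$ lies in $\C$, because $\C=\{X\mid\Hom_\La(\I,X)=0\}$ is closed under submodules. Thus $D_M\in\omega$.

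I expect the main obstacle to be the inclusion $\X\subseteq\C$, specifically ruling out preinjective summands by producing a divisible module with nonvanishing $\Ext^1$. The plan is to use the AR sequence ending in $I$, whose left term $\tau I$ is preinjective and hence divisible. The remaining steps are closure properties.
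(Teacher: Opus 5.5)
Your proof is correct and follows the paper's argument: Eklof--Trlifaj applied to $\R$, the Auslander--Reiten formula to identify $\Y$ with $\D$, Corollary~\ref{co:ext-divisible} for $\C\subseteq\X$, and Enochs' theorem for the minimal approximations. You also check the memberships in $\omega$ correctly, which the paper leaves implicit, using extension-closure of $\C$ and $\D$, closure of $\D$ under quotients and closure of $\C$ under submodules.

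The only variation is in $\X\subseteq\C$. You first split off an indecomposable preinjective summand using Propositions~\ref{pr:preinj} and~\ref{pr:preinjectives-split}, then use the Auslander--Reiten sequence ending in $I$; the paper instead applies the Auslander--Reiten formula directly to the infinite dimensional module $X$. For $\tau I\neq 0$, what you actually need is that a preinjective indecomposable is never projective, which holds because $\P\cap\I=0$; the remark about projective-injectives is not the relevant point.
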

\begin{proof}
  By definition $\X_0=\R$ generates a complete cotorsion pair $(\X,\Y)$
  with $\Y=\D$. We need to show that $\X=\C$. The inclusion
  $\X\subseteq\C$ follows from the Auslander--Reiten formula,
  and the reverse inclusion is Corollary~\ref{co:ext-divisible}.

The existence of the minimal approximations in (1) and (2) is now a consequence of the fact that
 $\C$ is closed under filtered colimits.
\end{proof}

\begin{cor}
  The pair $(\D,\E)$ is a split torsion pair.
\end{cor}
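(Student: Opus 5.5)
The plan is to show three things: that $(\D,\E)$ is a torsion pair, that every module splits accordingly, and that the defining orthogonality conditions hold.

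\emph{Orthogonality.} By definition $\E=\{X\mid \Hom_\La(\D,X)=0\}$, so it remains to check that
\[\D=\{Y\mid\Hom_\La(Y,\E)=0\}.\]
This will follow once we know that every module $M$ fits into an exact sequence $0\to d(M)\to M\to M/d(M)\to 0$ with $d(M)\in\D$ and $M/d(M)\in\E$. Indeed, if $\Hom_\La(Y,\E)=0$, then the map $Y\to Y/d(Y)$ vanishes, so $Y=d(Y)\in\D$.

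\emph{Existence of a largest divisible submodule.} The class $\D$ is closed under quotients. If $X\twoheadrightarrow X'$ and $R\in\R$, then $\Ext^1_\La(R,X)\to\Ext^1_\La(R,X')$ is surjective because $\La$ is hereditary, so $\Ext^1_\La(R,X)=0$ forces $\Ext^1_\La(R,X')=0$. The class $\D$ is also closed under direct sums. Hence the sum $d(M)$ of all divisible submodules of $M$ is divisible, being a quotient of their direct sum. Thus $d(M)$ is the largest divisible submodule of $M$.

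\emph{Reducedness of $M/d(M)$.} Suppose $D\to M/d(M)$ is a map with $D$ divisible, and let $D'$ be its image, which is divisible. Pull back along $M\to M/d(M)$ to obtain
\[0\to d(M)\to U\to D'\to 0.\]
Closure of $\D$ under extensions follows from the long exact $\Ext^1_\La(R,-)$ sequence, so $U\in\D$. Hence $U\subseteq d(M)$, which gives $D'=0$. Therefore $(\D,\E)$ is a torsion pair.

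\emph{Splitting.} The sequence $0\to d(M)\to M\to M/d(M)\to 0$ splits. I see two routes.

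The first uses Proposition~\ref{pr:divisible-pure-inj}: $d(M)$ is pure-injective, so it suffices to show the sequence is pure exact. That seems harder to verify directly.

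The better route is an Ext-vanishing argument. Decompose $N=M/d(M)=\fri(N)\oplus N'$ by Proposition~\ref{pr:preinjectives-split}, so $\fri(N')=0$. Corollary~\ref{co:ext-divisible} then gives $\Ext^1_\La(N',d(M))=0$. Moreover $\fri(N)$ is preinjective, hence divisible, and it is a submodule of the reduced module $N$, so it must be zero. Thus $\fri(N)=0$ and $\Ext^1_\La(N,d(M))=0$, which means the sequence splits.

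The main subtle point is this observation that reduced modules have $\fri=0$, which is what makes Corollary~\ref{co:ext-divisible} applicable.
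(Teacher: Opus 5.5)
Your proof is correct and is essentially the paper's argument written out in full: the paper obtains the torsion pair from $\R\subseteq\E$ (so that $\D$ is exactly the class of modules with no nonzero maps to $\E$) and the splitting from $\E\subseteq\C$ combined with the cotorsion pair $(\C,\D)$; your observation that a reduced module $N$ has $\fri(N)=0$ is precisely the content of $\E\subseteq\C$, and Corollary~\ref{co:ext-divisible} is exactly the half of that cotorsion pair being used. Two small simplifications: closure of $\D$ under quotients, sums and extensions is immediate from the definition $\Hom_\La(-,\R)=0$ without invoking heredity, and the detour through Proposition~\ref{pr:preinjectives-split} is unnecessary, since $\fri(N)=0$ already lets you apply Corollary~\ref{co:ext-divisible} to $N$ directly.
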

\begin{proof}
It is clear that  $\R\subseteq\E\subseteq\C$. Thus $(\D,\E)$ is a
torsion pair which splits as $(\C,\D)$ is a cotorsion pair.
\end{proof}

We obtain the following structural result for $\Mod\La$.

\begin{thm}\label{th:decompos}
  Every $\La$-module can be decomposed as
  \[M=E\oplus Q^{(\alpha)}\oplus R\oplus I\] where $E$ is reduced, $R$
  a direct sum of Pr\"{u}fer modules, $I$ a direct sum of
  indecomposable preinjective modules, and $\alpha$ a cardinal.

Moreover, the category $\Mod\La$ admits the following trisection:
\begin{center}
\setlength{\unitlength}{1.0mm}
\begin{picture}(90,18)
\put(0,0){\framebox(40,10){$\E$}}
\put(40,0){\framebox(10,10){$\omega$}}
\put(50,0){\framebox(40,10){$\bar\I$}}
\put(0,-2.1){\dashbox(50,2){}}
\put(23,-5.5){$\C$}
\put(40,10.1){\dashbox(50,2){}}
\put(63,13){$\D$}
\end{picture}
\end{center}
\vspace*{2em} The direction of non-zero morphisms is from left to
right and any morphism from $\E$ to $\bar\I$ factors through
$\omega$.
\end{thm}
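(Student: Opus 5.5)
Start with the split torsion pair $(\D,\E)$ from the preceding Corollary, which gives $M=E\oplus M_\D$ with $E$ reduced and $M_\D$ divisible. Next split off the preinjective part. By Proposition~\ref{pr:preinjectives-split}, $M_\D=\fri(M_\D)\oplus M'$. Proposition~\ref{pr:preinj} shows that $I\coloneqq\fri(M_\D)$ is a direct sum of indecomposable preinjectives. The complement $M'$ is a direct summand of a divisible module, so it is divisible, and $\fri(M')=0$, so $M'\in\C$. Hence $M'\in\omega$.

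It remains to decompose a module $N\in\omega$. Such an $N$ is regular: $\Hom(\I,N)=0$ because $N\in\C$, and $\Hom(N,\P)=0$ because every module in $\P$ embeds into a module in $\R$ while $N$ is divisible.

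Consider the canonical sequence $0\to\frt(N)\to N\to\frf(N)\to0$. Here $\frt(N)$ is torsion regular, so it lies in $\bar\R$. The quotient $\frf(N)$ is torsionfree and divisible, since divisibility is closed under quotients as $\Ext^1(\R,-)$ is right exact for hereditary $\La$. By the theorem on $\R^\perp$, $\frf(N)\cong Q^{(\alpha)}$.

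I then need $\frt(N)$ to be divisible, and this is the main obstacle. I would first try a duality argument: $DN$ is torsionfree and regular. Failing that, use $\Ext^1(R,\frt(N))$ for $R\in\R$ via the long exact sequence
\[\Hom(R,Q^{(\alpha)})\to\Ext^1(R,\frt(N))\to\Ext^1(R,N)=0.\]
The left term vanishes since $Q$ is torsionfree, so $\frt(N)$ is divisible.

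Given divisibility, Lemma~\ref{le:ext-divisible-reg} shows $\frt(N)$ is a direct sum $R$ of Pr\"ufer modules. It also shows the sequence splits, because $\frf(N)$ is regular and $\frt(N)$ is divisible. Thus $N\cong Q^{(\alpha)}\oplus R$, which gives $M=E\oplus Q^{(\alpha)}\oplus R\oplus I$.

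For the trisection, the preceding Corollary gives $\Hom(\D,\E)=0$. It remains to check $\Hom(\bar\I,\C)=0$, which holds because $\C$ is definable by the condition $\Hom(\I,-)=0$ and this passes to filtered colimits. Together these say there are no nonzero maps from $\omega\cup\bar\I$ to $\E$, nor from $\bar\I$ to $\omega$. Every module is a direct sum of pieces in $\E$, $\omega$ and $\bar\I$, so morphisms only go left to right.

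For the factorization, let $f\colon E\to J$ with $E\in\E$ and $J\in\bar\I$. Take the $\D$-envelope $0\to E\to D^E\to C^E\to0$ from part~(1) of the cotorsion-pair Proposition, with $D^E,C^E\in\omega$, valid since $E\in\C$. Then $\Ext^1(C^E,J)=0$ because $C^E\in\C$ and $J\in\D$. Hence $f$ extends to $D^E\in\omega$, so $f$ factors through $\omega$.
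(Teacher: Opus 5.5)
Your proof is correct and follows the paper's argument. You split off $\E$ via $(\D,\E)$, then the preinjective part via Proposition~\ref{pr:preinjectives-split}, then decompose the $\omega$-part along its torsion sequence, and you get the factorisation from the $\D$-envelope of a reduced module lying in $\omega$. The only local difference is how you handle the torsion sequence: you first obtain divisibility of $\frt(N)$ from the long exact $\Ext$-sequence and then split using Lemma~\ref{le:ext-divisible-reg}, whereas the paper splits the pure exact sequence $0\to\frt(W)\to W\to\frf(W)\to0$ via $\Sigma$-pure injectivity of $W$ (Proposition~\ref{pr:divisible-pure-inj}) and reads off divisibility of $\frt(W)$ as a direct summand; both work.
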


\begin{proof}
  Using that $(\D,\E)$ is a split torsion pair we decompose
  $M=M'\oplus E$ with $M'\in\D$ and $E\in\E$. According to
  Proposition~\ref{pr:preinjectives-split}, we can further write
  $M'=I\oplus W$ with $I\in\bar\I$ and $W\in\omega$.  Next consider
  the pure exact sequence $0\to\frt(W)\to W\to \frf(W)\to 0$, which
  splits since divisible modules are $\Sigma$-pure injective by
  Proposition~\ref{pr:divisible-pure-inj}.  Therefore
  $\frt(W)\in\bar{\R}$ is divisible and $\frf(W)$ is torsionfree
  divisible.  It follows from Lemma~\ref{le:ext-divisible-reg} and
  Theorem~\ref{th:endofinite} that $\frt(W)$ is a direct sum of
  Pr\"ufer modules, and $\frf(W)$ is a direct sum of copies of the
  generic module $Q$. Thus $W$ is of the required form.

  For the trisection, observe that every reduced module admits a
  $\D$-envelope lying in $\omega$. This yields the stated
  factorisation property.
\end{proof}

Notice that the splitting result shown above under the assumption
that $\La$ has tame representation type is in fact characteristic for
the tame case, as is the existence of a torsionfree generic
module. Thus let us assume for a moment that $\La$ is a connected hereditary
algebra of infinite representation type. Note that we still have the decomposition
$\mod\La=\P\vee\R\vee\I$ as in the tame case.

\begin{thm} 
The following statements are equivalent.
\begin{enumerate}
\item The algebra $\La$ is of tame representation type.
\item The torsion pair $(\bar\I,\overline{\P\vee\R})$ is a split torsion pair.
\item Every divisible module is pure-injective. 
\item The pair $(\C, \D)$ is a cotorsion pair.
\item There is a torsionfree generic $\La$-module.
\end{enumerate}
\end{thm}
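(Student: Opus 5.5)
The directions (1)$\Rightarrow$(2)--(5) are already established in the tame case. (2) is Proposition~\ref{pr:preinjectives-split}, since $\fri(M)=\frt(M)$ for the torsion pair $(\bar\I,\overline{\P\vee\R})$. (3) is Proposition~\ref{pr:divisible-pure-inj}. (4) is the proposition on $(\C,\D)$. (5) follows from Lemma~\ref{le:generic-construction}, because $Q$ is torsionfree and generic.

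For the converses I would assume $\La$ is wild hereditary. The aim is to violate each of (2)--(5) using the standard wild phenomena; this goes most naturally through contrapositives. The central tool is that for wild $\La$ there exist regular modules $X\in\R$ with $\Hom_\La(X,\tau^mX)\neq0$ for all $m$. Equivalently, one can find an infinite chain of regular bricks whose orbits never stabilise, and from it build a module $M$ that is an infinite direct sum (or a filtered colimit) of finite regular and preinjective pieces in which the preinjective part is not a summand. Concretely, I would take a family of preinjective modules $I_n$ with nonzero maps $I_n\to R_n$ into regular modules and form a direct product. The guiding model is that $\prod I_n$ is divisible but not in $\bar\I\vee\overline{\P\vee\R}$, in the same way that $\frf(\bar S^I)$ arose from a product. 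This should show that (2) fails.

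The implications among (2), (3), (4) I would argue as follows. (3)$\Rightarrow$(2) is exactly the argument of Proposition~\ref{pr:preinjectives-split}: $\fri(M)$ is divisible, so pure-injective by (3), and the pure inclusion $\fri(M)\to M$ then splits. (4)$\Rightarrow$(3) is the argument of Proposition~\ref{pr:divisible-pure-inj}: given (4), $\Ext^1(\C,\D)=0$, and a pure sequence $0\to X\to D^2X\to Z\to0$ splits after pulling back along the pure-projective $\fri(Z)$, with remainder in $\C$. For (5)$\Rightarrow$(1), a torsionfree generic $G$ is endofinite, so it is $\Si$-pure injective and has a finite-length endomorphism ring module structure. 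I would use Lemma~\ref{le:sub-P}(1)-type arguments in the wild case, together with the fact that the wild algebra has infinitely many Schur roots with nonpositive Tits form. This lets me show that a generic module's dimension vector over its endomorphism division ring gives an imaginary root $\delta$ with $q(\delta)=0$ that is a limit of preprojective directions. Such a root exists only in the tame case, by the Crawley-Boevey correspondence cited in the remark (stones versus Schur roots).

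This leaves the main obstacle: proving that (2) fails for wild $\La$, i.e.\ constructing $M$ with $\fri(M)$ not a direct summand. My first attempt would use a sequence of preinjectives $I_n=\tau^{n}I$ and the product $\prod_n I_n$. Its torsion part contains $\bigoplus I_n$ purely, and I would show that $\fri(\prod I_n)$ is not a summand by exhibiting an element mapping nontrivially into regular modules. In the wild case, nonzero maps from $\tau^nI$ to a fixed regular brick exist for all $n$, unlike the tame case where Hom vanishes for large $n$ in each tube; this should produce the required non-splitting.
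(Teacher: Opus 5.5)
The paper gives no argument for this theorem: it cites \cite{Ri1979,L2} for (1)$\Leftrightarrow$(2)$\Leftrightarrow$(3), \cite{AKT} for (1)$\Leftrightarrow$(4) and \cite{genrep} for (1)$\Leftrightarrow$(5). Your forward implications (1)$\Rightarrow$(2)--(5) are exactly the paper's tame-case results. Your reductions (4)$\Rightarrow$(3)$\Rightarrow$(2) are also correct for any hereditary algebra of infinite type, because Proposition~\ref{pr:preinj} holds verbatim there (each $\I_n$ still has only finitely many indecomposables), so $\fri(Z)$ is pure-projective and $\fri(M)$ is divisible and pure. This usefully leaves only (2)$\Rightarrow$(1) and (5)$\Rightarrow$(1) needing wild-case input, but neither is actually proved.

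For (2)$\Rightarrow$(1), your construction rests on a false claim. Over every hereditary algebra $\Hom_\La(\I,\R)=0$: for $J$ injective and $R$ regular, $\Hom_\La(\tau^aJ,R)\cong\Hom_\La(J,\tau(\tau^{-a-1}R))\cong D\Ext^1_\La(\tau^{-a-1}R,J)=0$. Maps in the other direction do not vanish in the tame case either: for the Kronecker algebra over an algebraically closed field, $\dim_k\Hom_\La(S,I)=1$ for every regular simple $S$ and every indecomposable preinjective $I$. Moreover $M=\prod_n\tau^nI$ is divisible, since divisible modules form a definable subcategory, so it has no non-zero maps to $\R$ at all. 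Exhibiting elements of $M$ outside $\fri(M)$ proves nothing. Any $(x_n)$ with all $x_n\neq0$ lies outside $\fri(M)$ over every hereditary algebra, because $\Hom_\La(\I_m,\tau^nI)=0$ for $n\geq m$. Yet in the tame case $\fri(M)$ is a proper direct summand by Proposition~\ref{pr:preinjectives-split}. What you need is a non-split pure exact sequence $0\to I'\to M\to N\to0$ with $I'\in\bar\I$ and $N\in\overline{\P\vee\R}$. Such an $N$ cannot be pure-projective, and producing it requires a genuinely wild phenomenon, which is what the cited references supply and your sketch does not.

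For (5)$\Rightarrow$(1), the sketch likewise names no property that separates tame from wild. Lemma~\ref{le:sub-P}(1) is tame-specific: it uses the defect and the representation-finiteness of the proper factor algebras $\La/\La e\La$, and neither is available for wild $\La$. Your intended endpoint is also false. Take the quiver with two arrows $1\to2$ and one arrow $2\to3$; it is wild, since $q(2,3,2)=-1$. Yet $(1,1,0)$ is an indivisible imaginary Schur root with $q=0$, whose stone is the Kronecker generic module extended by zero. So ``a root with $q(\delta)=0$ exists only in the tame case'' fails. The Crawley-Boevey correspondence between stones and indivisible Schur roots does not see torsionfreeness, and ``limit of preprojective directions'' is not defined. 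You would also first need $\End_\La(G)$ to be a division ring, which Lemma~\ref{le:end-generic} only yields when $\Ext^1_\La(G,G)=0$. The missing step is an actual argument that over a wild hereditary algebra no generic module lies in $\bar\P$; this is the content of \cite{genrep}, and your outline does not recover it.
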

\begin{proof}
  The equivalence of the first three conditions is proven in \cite[3.7
  and 3.9]{Ri1979} and \cite[Theorem 4.6]{L2}. (1)$\Leftrightarrow$(4)
  is \cite[Theorem 18]{AKT}, and (1)$\Leftrightarrow$(5) is shown in
  \cite{genrep}.\end{proof}

\subsection*{Elementary duality}

We discuss a refined version of the duality $D=\Hom_k(-,k)$ between right and left $\Lambda$-modules that is known as \emph{elementary duality}. It can be defined for an arbitrary $k$-algebra. Here it will be used to establish the classification of all pure-injective $\Lambda$-modules.

Let $\A\coloneqq\Add(\mod\Lambda^\op,\Ab)$ denote the category of additive functors $\mod\Lambda^\op\to\Ab$. The assignment
\[ X \longmapsto T_X\coloneqq X\otimes_\Lambda- \]
provides a useful embedding $\Mod\Lambda\to\A$. In fact, $X$ is pure-injective if and only if the object $T_X$ is injective, and every injective object $E$ in this functor category is isomorphic to $T_Y$ for $Y=E(\Lambda)$. Moreover, for every coherent $F$ there is a natural isomorphism
\[ F(X) \cong \Hom(F^\vee,T_X). \]

A pure-injective $\Lambda$-module $M$ is called \emph{simply reflexive} if there is a coherent functor $F$ such that $F(M)\neq 0$ but $F'(M)=0$ or $(F/F')(M)=0$ for every coherent subfunctor $F'\subseteq F$. In this case we call $(M,F)$ a \emph{simple pair}.

\begin{lem}\label{le:simply-reflexive}
Every simply reflexive pure-injective module has an indecomposable direct summand. A non-zero pure-injective module satisfying acc or dcc on subgroups of finite definition is simply reflexive.
\end{lem}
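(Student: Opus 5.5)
Both assertions will be proved in the functor category $\A$, using the embedding $X\mapsto T_X$ and the isomorphism $F(X)\cong\Hom(F^\vee,T_X)$. Recall that $M$ pure-injective means $T_M$ is injective, and that every injective object of $\A$ has the form $T_Y$.

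\emph{First assertion.} Let $(M,F)$ be a simple pair and set $G\coloneqq F^\vee$, a coherent, hence finitely presented, object of $\A$. By Lemma~\ref{le:coherent}(3), coherent subfunctors $F'\subseteq F$ correspond contravariantly to coherent quotients $G\twoheadrightarrow G/G'$, where $G'=(F/F')^\vee$. The condition ``$F'(M)=0$ or $(F/F')(M)=0$'' then becomes: for every finitely generated subobject $G'\subseteq G$, either every map $G\to T_M$ vanishes on $G'$, or every map $G'\to T_M$ is zero.

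Since $F(M)\neq0$, choose a nonzero $\phi\colon G\to T_M$. Let $K$ be the sum of all finitely generated subobjects $G'\subseteq G$ with $\Hom(G',T_M)=0$. I claim that $\bar G\coloneqq G/K$ is uniform, meaning any two nonzero subobjects intersect nontrivially. Take finitely generated $G_1,G_2\subseteq G$ not contained in $K$. Then $\Hom(G_i,T_M)\neq0$, so by the dichotomy every map $G\to T_M$ vanishes on $G_i$. One then checks that $G_1\cap G_2\not\subseteq K$; this uses that $\A$ is locally coherent, so intersections of finitely generated subobjects of $G$ are again finitely generated. Consequently the injective envelope $E(\bar G)$ is indecomposable.

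Because $\phi$ is nonzero and $\phi$ vanishes on $K$ (each $G'\subseteq K$ admits no nonzero map to $T_M$), $\phi$ factors through a nonzero map $\bar\phi\colon\bar G\to T_M$. Uniformity of $\bar G$ should force $\bar\phi$ to be a monomorphism: its kernel is a subobject on which the dichotomy applies, and a nonzero kernel would meet every nonzero finitely generated subobject. Granting this, $\bar\phi$ extends to a split monomorphism $E(\bar G)\to T_M$, since $T_M$ is injective. Writing $E(\bar G)\cong T_N$, the module $N$ is then an indecomposable direct summand of $M$, because $T$ is fully faithful.

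I expect the main obstacle to be showing that $\ker\bar\phi=0$, equivalently that the dichotomy makes $\bar G$ genuinely uniform relative to $T_M$. If this direct approach fails, I will instead replace $K$ by a maximal subobject $K'\supseteq K$ for which $\phi$ is still nonzero on $G/K'$, obtained by Zorn's Lemma using finite generation of $G$, and then show that $G/K'$ is uniform.

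\emph{Second assertion.} Let $M\neq0$ be pure-injective with dcc on subgroups of finite definition; acc is handled dually. Choose $C$ with $\Hom_\La(C,M)\neq0$. By dcc on $\bfL_C(M)$ there is a coherent subfunctor $F\subseteq\Hom_\La(C,-)$ with $F(M)$ nonzero and minimal among nonzero values. Then for any coherent $F'\subseteq F$ the value $F'(M)$ is either $0$ or equal to $F(M)$, and in the latter case $(F/F')(M)=F(M)/F'(M)=0$. Hence $(M,F)$ is a simple pair, and $M$ is simply reflexive. Under acc, I will instead choose $F'\subsetneq F$ in the lattice with $F'(M)$ maximal strictly below $F(M)$ and take the simple pair $(M,F/F')$.
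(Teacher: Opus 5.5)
Your second assertion is fine, including the acc case: choosing $F'(M)$ maximal strictly below $F(M)$ and using the pair $(M,F/F')$ is a valid direct substitute for the paper's passage to $DM$.

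\textbf{The gap in the first assertion.} It starts with your translation of the simple-pair condition. Since $F'(M)\cong\Hom((F')^\vee,T_M)\cong\Hom(G/G',T_M)$, the alternative $F'(M)=0$ says that no \emph{nonzero} map $G\to T_M$ vanishes on $G'$. It does not say that every map $G\to T_M$ vanishes on $G'$.

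As you state it, injectivity of $T_M$ makes ``every map $G\to T_M$ vanishes on $G'$'' equivalent to $\Hom(G',T_M)=0$. So your two alternatives coincide, and applying the dichotomy to $G'=G$ would give $F(M)=0$. This shows up in your step ``$\Hom(G_i,T_M)\neq0$, so every map $G\to T_M$ vanishes on $G_i$'', which contradicts itself.

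Consequently the two claims that carry the argument are not established: $G_1\cap G_2\not\subseteq K$, and $\ker\bar\phi=0$. The first is left as ``one then checks''. The second you explicitly only grant, and the reason you sketch gives no contradiction on its own: a nonzero map out of a uniform object need not be monic.

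\textbf{How to repair it.} The missing ingredient is the correct dichotomy: for every finitely generated $G'\subseteq G$, either $\Hom(G/G',T_M)=0$ or $\Hom(G',T_M)=0$. With it both steps are short, and no Zorn's Lemma fallback is needed.

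\emph{Kernel.} If $G'\subseteq\ker\phi$ is finitely generated, then $\phi$ induces a nonzero map $G/G'\to T_M$. Hence $\Hom(G',T_M)=0$ and $G'\subseteq K$. Thus $\ker\phi=K$, so $\bar\phi$ is monic and $\bar G\cong\Im\phi$.

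\emph{Uniformity.} If $G_1,G_2\not\subseteq K$ are finitely generated, then $\Hom(G/G_i,T_M)=0$. Since $G/(G_1\cap G_2)$ embeds in $G/G_1\oplus G/G_2$ and $T_M$ is injective, also $\Hom(G/(G_1\cap G_2),T_M)=0$. If moreover $G_1\cap G_2\subseteq K$, left exactness of $\Hom(-,T_M)$ would force $\Hom(G,T_M)=0$, contradicting $\phi\neq0$.

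This corrected argument is an unwinding of the paper's proof. The paper observes that the dichotomy makes $F^\vee$ a simple object of the localisation of $\A$ at ${}^\perp T_M$. The nonzero torsionfree quotient $\Im\phi\subseteq T_M$ is therefore simple there, and its injective envelope inside $T_M$ is indecomposable by Gabriel's result.
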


\begin{proof}
Let $(M,F)$ be a simple pair. Choose a non-zero element in $F(M)$, corresponding to a morphism $\phi\colon F^\vee\to T_M$. An injective envelope $\Im\phi\rightarrowtail T_N$ in the functor category $\A$ provides an indecomposable direct summand $N$ of $M$. This follows since $F^\vee$ is a simple object in the localisation of $\A$ with respect to $^\perp{T_M}$, and an injective envelope of a simple object is indecomposable; see \cite[Chap.~III, Proposition~6]{Ga1962}.\footnote{
Let $\A/\C$ be the localisation with respect to a Serre subcategory $\C\subseteq\A$. An object $X\in\A$ is simple in $\A/\C$ if and only if $X\not\in\C$ and for every subobject $X'\subseteq X$ we have $X'\in\C$ or $X/X'\in\C$.
}
  
Now suppose that $M$ is a pure-injective module satisfying dcc on subgroups of finite definition. Choose a coherent subfunctor $F\subseteq \Hom_\Lambda(\Lambda,-)$ such that $F(M)\neq 0$, and minimal with respect to this property. Then $(M,F)$ is a simple pair.  If instead $M$ satisfies acc, then $DM$ satisfies dcc and there is a simple pair $(DM,F)$. Thus $(M,F^\vee)$ is a simple pair.
\end{proof}

As in Section~\ref{se:purity} let $\Coh(\Lambda)$ denote the abelian category of coherent functors on $\Mod \Lambda$, and write $\bfL$ for the modular lattice of coherent subfunctors of the forgetful functor. Each $\Lambda$-module $M$ determines a Serre subcategory
\[ \C_M \coloneqq \{ F \in \Coh(\Lambda) \mid F(M)=0 \} \]
and the lattice $\bfL(M)$ given by evaluation at $M$ identifies with the lattice of subfunctors of the forgetful functor in the Serre quotient $\Coh(\Lambda)/\C_M$.

\begin{prop}[\cite{He1993,Kr1997}]\label{pr:duality}
Let $M$ be a simply reflexive indecomposable pure-injective $\Lambda$-module. Then there exists a simply reflexive indecomposable pure-in\-jec\-tive $\Lambda^\op$-module $M^\vee$ with the following property: for each coherent functor $F$ on $\Mod\Lambda^\op$ we have
\[ F(M^\vee) = 0 \quad \iff \quad F^\vee(M)=0. \]
The module $M^\vee$ is unique up to isomorphism and a direct summand of $DM$. Moreover, $M\cong (M^\vee)^\vee$.
\end{prop}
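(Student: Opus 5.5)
Fix a simple pair $(M,F)$ on $\Mod\Lambda$. The idea is to realise $M$ as the indecomposable injective hull, in a suitable localisation of $\A=\Add(\mod\Lambda^\op,\Ab)$, of a simple object determined by $F$, and to dualise this. The Serre subcategory $\C_M\subseteq\Coh(\Lambda)$ determines, via $G\mapsto G^\vee$ (Lemma~\ref{le:coherent}(3)), a Serre subcategory $\C_M^\vee$ of $\Coh(\Lambda^\op)$. Since $M$ is indecomposable, the quotient $\Coh(\Lambda)/\C_M$ is \emph{local}: it has a unique simple object, namely the image of $F$. Indeed $F(M)\cong\Hom(F^\vee,T_M)$, and $T_M$ is an indecomposable injective, hence the injective envelope of the image of $F^\vee$, as in the proof of Lemma~\ref{le:simply-reflexive}. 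Dually, $\Coh(\Lambda^\op)/\C_M^\vee$ is local with simple object $F^\vee$. The goal is therefore a pure-injective $\Lambda^\op$-module $N$ with $\C_N=\C_M^\vee$.

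To construct $N$, pass to $DM$. By Lemma~\ref{le:coherent}(4), $G(DM)\cong DG^\vee(M)$ for coherent $G$ on $\Mod\Lambda^\op$, so $\C_{DM}=\C_M^\vee$. Moreover $(DM,F^\vee)$ is a simple pair: for $G'\subseteq F^\vee$ coherent, either $G'(DM)=0$ or $(F^\vee/G')(DM)=0$. This holds because dualising $G'\subseteq F^\vee$ gives an epimorphism $F\cong F^{\vee\vee}\to G'^\vee$ with kernel $(F^\vee/G')^\vee$, and we can then apply the simple-pair property of $M$. Also $DM$ is pure-injective, since duals are pure-injective. Choose a non-zero element of $F^\vee(DM)$, that is, a map $F\to T_{DM}$, and let $T_N$ be an injective envelope of its image inside $T_{DM}$. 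By Lemma~\ref{le:simply-reflexive}, $N$ is an indecomposable direct summand of $DM$, and $(N,F^\vee)$ is a simple pair, so $N$ is simply reflexive. Set $M^\vee\coloneqq N$.

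It remains to check that $\C_N=\C_M^\vee$, and this is the main obstacle. One inclusion is immediate: $N$ is a summand of $DM$, so $\C_{DM}\subseteq\C_N$. For the converse, suppose $G(N)\neq0$ with $G$ coherent. Then a non-zero map $G^\vee\to T_N$ exists, and since $T_N$ is the injective envelope of a quotient of $F$ that is simple modulo $\C_{DM}$, the image meets it essentially. I would argue that this forces $F$ to be a subquotient of $G^\vee$ modulo $\C_{DM}$. Concretely, using that coherent functors are finitely presented, I would find a coherent subfunctor of $G^\vee$ mapping onto a coherent subobject of the image which is non-zero modulo $\C_{DM}$, and hence ``contains'' $F$ modulo $\C_{DM}$. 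It then follows that $G(DM)\neq0$, so $G\notin\C_{DM}$, giving $\C_N=\C_{DM}=\C_M^\vee$, which is exactly the displayed equivalence. The technical work is to keep this Gabriel-localisation argument (simple object modulo a Serre subcategory, with indecomposable injective envelope) inside coherent functors.

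For uniqueness, an indecomposable pure-injective $N'$ with $\C_{N'}=\C_M^\vee$ has $F^\vee(N')\neq0$, while every proper coherent subfunctor of $F^\vee$ that is non-zero modulo $\C_{N'}$ has its quotient in $\C_{N'}$. Hence $T_{N'}$ is the injective envelope, in $\A/{}^\perp T_{N'}$, of the same simple object $F^\vee$ as $T_N$, so $T_{N'}\cong T_N$ and $N'\cong N$. Finally, $(M^\vee)^\vee$ is an indecomposable pure-injective $\Lambda$-module annihilated by exactly $\C_M^{\vee\vee}=\C_M$, using $G\cong G^{\vee\vee}$. Since $M$ has the same property, uniqueness gives $M\cong(M^\vee)^\vee$.
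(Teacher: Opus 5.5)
Your construction of $N$ as an indecomposable summand of $DM$ with $(N,F^\vee)$ a simple pair is fine and agrees with the paper. However, the step you yourself identify as the main obstacle is argued in the wrong direction. You assume $G(N)\neq0$ and deduce $G(DM)\neq0$. That only shows $\C_{DM}\subseteq\C_N$, which is exactly the inclusion you had already called immediate (indeed $G(DM)\cong G(N)\oplus G(N')$ for a complement $N'$). The inclusion that needs proof is $\C_N\subseteq\C_{DM}$, i.e.\ $G(DM)\neq0\Rightarrow G(N)\neq0$, and nothing in the proposal establishes it.

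A telltale sign is that your $T_N$-argument never uses that $M$ is indecomposable, whereas $\C_N=\C_{DM}$ is false without it. Take $M=M_1\oplus M_2$ with $M_1\not\cong M_2$ indecomposable of finite length, and let $F$ be the simple coherent functor given by a left almost split map starting at $M_1$. Then $(M,F)$ is a simple pair and $N\cong DM_1$, but the simple functor attached to $DM_2$ lies in $\C_N$ and not in $\C_{DM}$. Your first paragraph mentions the right ingredient ($T_M$ is an indecomposable injective), but it is never used where it matters. Moreover, ``unique simple object'' would not suffice anyway, since a non-zero object of a Serre quotient need not have a simple subquotient.

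The missing step is to argue with $T_M$ rather than $T_N$; this is where indecomposability of $M$ enters. If $G(DM)\neq0$, then $\Hom(G,T_M)\cong G^\vee(M)\neq0$. Note that $\C_{DM}=\{H\in\Coh(\Lambda^\op)\mid\Hom(H,T_M)=0\}$ and that $F^\vee$ is simple modulo $\C_{DM}$. As $M$ is indecomposable, $T_M$ is an injective envelope of the image of a non-zero $\phi\colon F^\vee\to T_M$. Hence the image of a non-zero $\psi\colon G\to T_M$ meets $\Im\phi$.

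Pick a finitely generated, hence coherent, subobject $P'$ of the pullback of $\psi$ and $\phi$ on which the common map to $T_M$ is non-zero. Its image $F'\subseteq F^\vee$ satisfies $\phi|_{F'}\neq0$, so $F'\notin\C_{DM}$ and therefore $F^\vee/F'\in\C_{DM}$. The kernel of $P'\to G$ embeds in $\Ker\phi$, so it lies in $\C_{DM}$: otherwise simplicity would put the corresponding quotient of $F^\vee$ in $\C_{DM}$, although $\phi$ factors through it. Hence $F^\vee$ is a subquotient of $G$ modulo $\C_{DM}$, so also modulo $\C_N\supseteq\C_{DM}$. Since $F^\vee(N)\neq0$, this forces $G(N)\neq0$. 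This is exactly the paper's argument.

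There are also two smaller slips. First, subquotients of $G^\vee\in\Coh(\Lambda)$ must be taken modulo $\C_M=(\C_{DM})^\vee$, not modulo $\C_{DM}$. Second, in the uniqueness part the relevant simple object is $F$ (mapping to $T_N$ and $T_{N'}$ in $\Add(\mod\Lambda,\Ab)$), not $F^\vee$. You should also localise at the subcategory generated by $\C_M$, which is common to $N$ and $N'$, rather than at ${}^\perp T_{N'}$.
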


\begin{proof}
Choose $F$ coherent such that $(M,F)$ is a simple pair. Then $(DM,F^\vee)$ is a simple pair, and Lemma~\ref{le:simply-reflexive} yields the desired simply reflexive indecomposable summand $M^\vee$ of $DM$. Note that $\C_{M^\vee}\supseteq \C_{DM}=(\C_M)^\vee$ as $M^\vee$ is a direct summand of $DM$. Now let $G\in\Coh(\Lambda^\op)$. Then $G\not\in (\C_M)^\vee$ implies $\Hom(G,T_M)\neq 0$. Thus $F^\vee$ is isomorphic to a subquotient of $G$ in $\Coh(\Lambda^\op)/(\C_M)^\vee$, and the same holds in $\Coh(\Lambda^\op)/\C_{M^\vee}$. The construction of $M^\vee$ implies that $F^\vee\not\in\C_{M^\vee}$ since $\Hom(F,T_{M^\vee})\neq 0$. Thus $G\not\in \C_{M^\vee}$ and therefore $\C_{M^\vee}=(\C_M)^\vee$. The uniqueness of $M^\vee$ follows from the uniqueness of an injective envelope, and the same reason yields the isomorphism $M\cong (M^\vee)^\vee$.
\end{proof}  

Our techniques yield another result which we used earlier in Corollary~\ref{co:endofinite} to describe the definable subcategory generated by an endofinite module.

\begin{lem}\label{le:simply-reflexive-endofinite}
Let $M$ be an endofinite module. Then any indecomposable module belonging to the definable subcategory generated by $M$ is isomorphic to a direct summand of $M$.
\end{lem}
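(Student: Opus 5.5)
Let $N$ be an indecomposable module in the definable subcategory $\langle M\rangle$ generated by $M$. The plan is to show that $N$ is pure-injective with local endomorphism ring, and then to produce a nonzero morphism $N\to M$, or $T_N\to T_M$, that must split.

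First, $N$ inherits acc and dcc on subgroups of finite definition from $M$ (Remark~\ref{re:quot-lattice}), so $N$ is endofinite by Proposition~\ref{pr:subgroup-fin-def}. In particular $N$ is $\Sigma$-pure injective. By Lemma~\ref{le:simply-reflexive} it is simply reflexive, so we may choose a simple pair $(N,F)$. Since $F(N)\neq 0$ and $N\in\langle M\rangle$, we also have $F(M)\neq 0$.

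Next I pass to the functor category $\A$ and the Serre subcategory $\C_N$. A nonzero element of $F(N)$ gives a morphism $F^\vee\to T_N$, and as in the proof of Lemma~\ref{le:simply-reflexive}, $T_N$ is an injective envelope of the image, which is simple in the localisation of $\A$ at ${}^\perp T_N$. The point is that $T_N$ is the injective envelope of this simple object modulo the Serre subcategory. The same recipe applied to $M$ should give an indecomposable summand $M'$ of $M$ with $T_{M'}$ the injective envelope of a simple object determined by $F^\vee$ in a localisation with respect to $\C_M$.

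To compare the two, I use $\C_M\subseteq\C_N$, which holds because $N\in\langle M\rangle$. The goal is to show that $\C_{M'}=\C_N$; then uniqueness of injective envelopes of simples gives $T_N\cong T_{M'}$, hence $N\cong M'$. The key input is endofiniteness of $M$. It means $\Coh(\Lambda)/\C_M$ is a length category: the forgetful functor has finite length there by acc and dcc on $\bfL(M)$, and every coherent functor is a subquotient of finite sums of it. So $\Coh(\Lambda)/\C_M$ has finitely many simple objects, and the Serre subcategories containing $\C_M$ correspond to sets of these simples. Decomposing $M=\bigoplus M_i$ by Corollary~\ref{co:decomposition}, only finitely many isomorphism types occur, since endofiniteness bounds them. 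Each indecomposable summand $M_i$ is then endofinite with local, in fact nilpotent-radical, endomorphism ring, so its localisation $\Coh/\C_{M_i}$ is a length category whose objects all have ``socle'' concentrated at one simple $F_i$. This is the simple-pair simple, with $T_{M_i}$ the injective envelope of $F_i^\vee$.

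For $N$, choose the simple pair simple $F$, minimal in the length category $\Coh/\C_M$. Then $F\cong F_i$ in $\Coh/\C_M$ for some $i$, because the $T_{M_i}$ jointly cogenerate. So $\Hom(F^\vee,T_{M_i})\neq 0$ with $F^\vee$ simple modulo $\C_M$, hence modulo ${}^\perp T_{M_i}$. On the other side, $T_N$ is the injective envelope of the image of $F^\vee$ modulo $\C_N\supseteq\C_M$.

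The main obstacle is to match the localisations: I need to show that $\C_N=\C_{M_i}$, or directly that the envelope of $F^\vee$ computed in $\A$ relative to $M$ coincides with $T_N$. I would try to argue as follows. $T_N$ is injective in $\A$, and every object of $\A$ is a filtered colimit of coherent-type objects. Since $N$ lies in $\langle M\rangle$, $T_N$ is a direct summand of a product of copies of $T_M=\bigoplus T_{M_i}$. Because $M$ is $\Sigma$-pure injective and products of copies of $M$ are again sums of copies of the $M_i$ by Corollary~\ref{co:decomposition}, $T_N$ is a summand of $\bigoplus T_{M_i}^{(J)}$. An indecomposable injective with local endomorphism ring that is a summand of such a sum is isomorphic to some $T_{M_i}$ by Krull--Remak--Schmidt--Azumaya, which uses that the $M_i$ have local endomorphism rings (Lemma~\ref{le:end-generic}). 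This last route may in fact bypass the localisation argument entirely. The plan is therefore: $N$ is pure-injective and indecomposable; $N$ is a pure submodule of, hence by Remark~\ref{re:quot-lattice} a summand of, a product $M^I$, which is again endofinite and so a direct sum of copies of the $M_i$; then apply Krull--Remak--Schmidt--Azumaya. The one step still to check is that $N$ embeds purely into some $M^I$ when $N\in\langle M\rangle$. I would deduce this from $F(N)=0$ for all $F\in\C_M$, by realising each nonzero element through a morphism to $M$ and taking the product map.
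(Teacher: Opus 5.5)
Your final route has a genuine gap at the claim that $M^I$ ``is again endofinite and so a direct sum of copies of the $M_i$''. Corollary~\ref{co:decomposition} only says that $M^I$ is a direct sum of indecomposables with local endomorphism rings. It does not say that these indecomposables are isomorphic to summands of $M$, and for a merely $\Sigma$-pure injective module that is false. By Lemma~\ref{le:generic-construction2}, the product $\bar S^{\bbN}$ of copies of a Pr\"ufer module has the generic module $Q$ as a direct summand, and $Q\not\cong\bar S$.

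For endofinite $M$ the inclusion $\Prod M\subseteq\Add M$ does hold, but it carries essentially the content of the lemma itself, since every indecomposable summand of $M^I$ lies in the definable subcategory generated by $M$. In the paper it is deduced \emph{from} this lemma in Corollary~\ref{co:endofinite}. The only independent proof is the character argument of Theorem~\ref{th:endofinite-definable}: $\character_{M^I}=\character_M$, whereas an indecomposable summand not isomorphic to a summand of $M$ would make the character strictly larger. As written, your argument is circular.

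The pure embedding $N\to M^I$ is also left open, and $\C_M\subseteq\C_N$ by itself produces no morphisms $N\to M$. For $0\neq x\in F(N)$ you must show that the image of $\hat x\colon F^\vee\to T_N$ still maps nontrivially to $T_M$. That image is finitely generated but usually not finitely presented. This step needs dcc on $M$: the subgroups of finite definition $\Hom(F^\vee/K_\alpha,T_M)$ of $F(M)$ stabilise. After that, injectivity of $T_M$ and the product map finish the job.

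The localisation route you abandoned actually closes, and it is the paper's proof; the ``matching of localisations'' you worried about is not needed. Since $\Coh(\Lambda)/\C_M$ has finite length, some simple object $F_i$ there, namely a composition factor of the forgetful functor, satisfies $F_i(N)\neq 0$. Let $\mathcal S$ be the localising subcategory of $\A$ generated by the finitely presented $G$ with $\Hom(G,T_M)=0$. Then:
\begin{itemize}
\item $F_i^\vee$ is simple in $\A/\mathcal S$.
\item $T_N$ is $\mathcal S$-torsionfree precisely because $\C_M\subseteq\C_N$, so $T_N$ is an indecomposable injective object of $\A/\mathcal S$.
\item $T_N$ receives a nonzero map from $F_i^\vee$, so it is an injective envelope of $F_i^\vee$ in $\A/\mathcal S$.
\item The same holds for $T_{M'}$, where $M'$ is the summand of $M$ obtained from a nonzero element of $F_i(M)$.
\end{itemize}
Hence $T_N\cong T_{M'}$ and $N\cong M'$. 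You never need $\C_N=\C_{M_i}$; you only need $T_N$ to be an injective object of the localisation determined by $M$.
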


\begin{proof}
The lattice of subobjects of the forgetful functor in $\Coh(\Lambda)/\C_M$ identifies with $\bfL(M)$, which has finite height by Proposition~\ref{pr:subgroup-fin-def}. It follows that every object in $\Coh(\Lambda)/\C_M$ has finite length, and there is a finite representative set of simple objects $F_1,\ldots F_r$.

Now let $N$ be an indecomposable module satisfying $\C_M\subseteq\C_N$. The category $\Coh(\Lambda)/\C_N$ is a non-zero quotient of $\Coh(\Lambda)/\C_M$, so there is at least one index $i$ such that $F_i(N)\neq0$. We now argue as in the proof of Lemma~\ref{le:simply-reflexive}. Choose a non-zero element in $F_i(M)$, corresponding to a morphism $\phi\colon F_i^\vee\to T_M$. An injective envelope $\Im\phi\rightarrowtail T_{M'}$ in $\A$ provides an indecomposable direct summand $M'\subseteq M$ since $F_i^\vee$ is a simple object in the localisation of $\A$ with respect to $^\perp{T_M}$. On the other hand, $\Hom(F_i^\vee,T_N)\neq 0$. Thus $T_N$ is an injective envelope as well, and therefore $N\cong M'$.
\end{proof}

\subsection*{The classification of all pure-injectives}

We establish the classification of all pure-injective
$\La$-modules. First of all, we need to introduce the dual counterpart
of the Pr\"ufer modules. For every simple regular module $S\in\R$ we
denote by $\hat{S}$ the \emph{adic module} constructed as the inverse
limit of the chain of irreducible epimorphisms ending in $S$.  Note
that $\hat S=D\bar T$ for $T=DS$ since
$D(\colim_{n\ge 1}T_n)=\lim_{n\ge 1}DT_n$.  This shows in particular
that $\hat{S}$ is torsionfree, reduced, and regular; it is
indecomposable as shown e.g.~in \cite[p.~49]{CB2}.

The following describes the elementary duality between modules over
$\La$ and $\La^\op$.

\begin{prop}
  Let $M$ be an indecomposable $\La$-module of finite length. Then $M^\vee=DM$. For
  $S\in\R$ simple we have $\bar S^\vee=D\bar S=\hat T$ for
  $T=S^\vee$, and therefore  $\hat S^\vee=\bar T$. Finally,
  $(Q_\La)^\vee=Q_{\La^\op}$.\qed
\end{prop}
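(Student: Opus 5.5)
The plan rests on the characterisation in Proposition~\ref{pr:duality}. $M^\vee$ is the unique indecomposable pure-injective $\Lambda^\op$-module with $\C_{M^\vee}=(\C_M)^\vee$, and it is a direct summand of $DM$. So in each case it suffices to exhibit an indecomposable pure-injective direct summand $N$ of $DM$ with $\C_N=(\C_M)^\vee$. When $DM$ itself is indecomposable, $DM$ is a summand of itself, and uniqueness then forces $M^\vee=DM$. The last step is $\C_{DM}=(\C_M)^\vee$, which holds in general by Lemma~\ref{le:coherent}(4): $F^\vee(DM)\cong DF(D^2M)$ and $F(D^2M)\cong D^2F(M)$. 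So the whole proof reduces to knowing that $DM$ is indecomposable (and simply reflexive) in each case.

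For $M$ of finite length this is immediate. $DM$ is indecomposable of finite length and endofinite, hence simply reflexive by Lemma~\ref{le:simply-reflexive}, and so $M^\vee=DM$.

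For a Pr\"ufer module $\bar S$, Lemma~\ref{le:prufer} shows that $\bar S$ is $\Sigma$-pure injective, so it satisfies dcc and is simply reflexive. Dualising the colimit $\bar S=\colim S_n$ gives $D\bar S=\lim DS_n$, the inverse limit of the chain of irreducible epimorphisms $DS_{n+1}\twoheadrightarrow DS_n$ ending in $DS$. This is the adic module $\hat T$ over $\La^\op$ with $T=DS$, which is indecomposable by the cited fact. Hence $\bar S^\vee=D\bar S=\hat T$. Since $M=S$ has finite length, $T=DS=S^\vee$. Applying $(M^\vee)^\vee\cong M$ together with the symmetric statement for $\La^\op$ then gives $\hat S^\vee=\bar T$, once the indices are matched: write $S=DT'$ with $T'$ a simple regular $\La^\op$-module, so $\hat S=D\bar{T'}$ is dual to a Pr\"ufer module.

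For the generic module, $Q=Q_\La$ is endofinite by Theorem~\ref{th:endofinite}. Its dual $DQ$ is again endofinite by Proposition~\ref{pr:subgroup-fin-def} and has no finite length summands, as in the proof of Theorem~\ref{th:endofinite2}. Moreover $DQ$ is torsionfree divisible, because $Q$ is both torsionfree and divisible and duality interchanges these properties. So $DQ$ is a direct sum of copies of $Q_{\La^\op}$. Then $Q^\vee$, being an indecomposable summand of $DQ$, is isomorphic to $Q_{\La^\op}$.

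The main obstacle is indecomposability of $D\bar S=\hat T$. I take this from the cited reference; alternatively it follows because $\End(\hat T)$ contains a complete local ring acting with simple top.
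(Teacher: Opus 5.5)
Your proposal is correct and follows essentially the argument the paper leaves implicit (the statement is given without proof, resting on Proposition~\ref{pr:duality}, the identity $\hat S=D\bar T$ together with the cited indecomposability of adic modules, and $(M^\vee)^\vee\cong M$): $M^\vee$ is a non-zero summand of $DM$, hence equals $DM$ whenever $DM$ is indecomposable, and $DQ$ is torsionfree divisible, hence a direct sum of copies of $Q_{\La^\op}$. Two minor points. First, Lemma~\ref{le:coherent}(4) gives $DF^\vee(DM)\cong F(D^2M)\cong D^2F(M)$ rather than the isomorphism you wrote; only vanishing matters, so nothing breaks, and the uniqueness detour is unnecessary since $DM$ indecomposable already forces $M^\vee=DM$. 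Second, your \emph{alternative} argument via a complete local subring of $\End(\hat T)$ is not a proof of indecomposability, so keep the citation, as the paper does.
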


We are ready to provide the complete description of
the pure-injective $\La$-modules.

\begin{thm}\label{th:classification}
The following is, up to isomorphism, a complete list of indecomposable
pure-injective $\La$-modules.
\begin{enumerate}
\item The indecomposable modules of finite length (preprojective,
  regular, and preinjective).
\item For each simple regular $S$ the Pr\"ufer module $\bar S$
  and the adic module $\hat S$.
\item The unique torsionfree divisible module $Q$. 
\end{enumerate}
Moreover, each pure-injective $\La$-module $X$ is a pure-injective
envelope of a direct sum $\bigoplus_{i\in I}X_i$ of indecomposable
pure-injective modules, and the $X_i$ are uniquely determined, up to
isomorphism, by $X$.  
\end{thm}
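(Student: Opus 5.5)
Let $X$ be an indecomposable pure-injective $\La$-module. I will split into cases according to the trisection of Theorem~\ref{th:decompos}. By that theorem $X=E\oplus Q^{(\alpha)}\oplus R\oplus I$, so indecomposability puts $X$ in exactly one of $\E$, $\bar\I$, or $\omega$.

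The cases $X\in\bar\I$ and $X\in\omega$ are direct. If $X\in\bar\I$, Proposition~\ref{pr:preinj} shows that $X$ is a finite length preinjective module. If $X\in\omega$, then $X$ is either a Pr\"ufer module $\bar S$ or the generic module $Q$.

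The main obstacle is the reduced case $X\in\E$, i.e.\ the non-divisible indecomposable pure-injectives. Here I would use elementary duality, Proposition~\ref{pr:duality}, which needs $X$ to be simply reflexive.
\begin{itemize}
\item[(a)] First I show that $X$ is simply reflexive. Since $X$ is not divisible, some $R\in\R$ has $\Hom_\La(X,R)\ne0$. The coherent functor $F=D(-\otimes_\La DR)\cong\Hom_\La(-,R)$ from Example~\ref{ex:definable}(2) then satisfies $F(X)\neq 0$, and the corresponding lattice $\bfL_F(X)$ is a lattice of subgroups of finite definition of a finite dimensional module. A dcc/acc argument in this lattice should give a simple pair. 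Alternatively, I would try showing directly that $\Hom_\La(X,R)\neq0$ forces $D X$ to have a non-zero summand with dcc via Lemma~\ref{le:simply-reflexive}.
\item[(b)] Next I pass to $X^\vee$. This is a summand of $DX$ and has $\Hom(DR,X^\vee)\ne0$-type behaviour: $F^\vee(X^\vee)\ne0$ with $F^\vee=-\otimes_{\La^\op}DR$ or $\Hom(DR,-)$. In particular $X^\vee$ is not torsionfree, or has a non-zero torsion part.
\item[(c)] Then I show $X^\vee$ lies in $\D$ or in finite length. Its torsion part is a pure submodule, and I would argue that $X^\vee$ falls into the already-classified divisible-or-finite classes of Theorem~\ref{th:decompos}, or has a finite length or Pr\"ufer summand by Lemma~\ref{le:torsion-indec}. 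Pure-injectivity plus indecomposability of $X^\vee$ then makes $X^\vee$ finite length, Pr\"ufer, or $Q$. I expect the delicate point to be $X^\vee\in\E$ for the opposite algebra; there I would use that the torsion part of a pure-injective is pure, and that Pr\"ufer modules are $\Sigma$-pure-injective, to split off an indecomposable torsion summand.
\item[(d)] Finally I apply $M\cong(M^\vee)^\vee$ together with the preceding proposition: $X$ is $D$ of a finite length module, an adic module $\hat S$, or $Q$. The case $Q$ is excluded since $X$ is reduced.
\end{itemize}

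For the "moreover" part, let $X$ be an arbitrary pure-injective module. The submodule generated by indecomposable summands, chosen via Zorn's Lemma as in Corollary~\ref{co:decomposition}, gives a pure submodule $\bigoplus X_i$. I want to show $X$ is its pure-injective envelope. Suppose not, so $X=PE(\bigoplus X_i)\oplus X'$ with $X'\ne0$. Then $X'$ has no indecomposable summand. By the list above every module of the form $D^2$ lies in a definable subcategory where all points are simply reflexive (finite endolength or Pr\"ufer/adic), so $X'$ has an indecomposable summand by Lemma~\ref{le:simply-reflexive}, a contradiction. For this I would use that the Ziegler spectrum here has all points isolated in some closed subset, i.e.\ Cantor--Bendixson rank is finite. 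Uniqueness of the $X_i$ follows from the uniqueness of injective envelopes in $\A$, together with the Krull--Schmidt--Azumaya theorem for the injective $T_X$, since each $T_{X_i}$ has local endomorphism ring.
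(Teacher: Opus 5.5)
There is a genuine gap in step (a), which is where you expected the difficulty; your cases $X\in\bar\I$ and $X\in\omega$ and steps (b)--(d) are fine. Nothing in (a) proves that a reduced indecomposable pure-injective $X$ is simply reflexive.

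\begin{itemize}
\item The functor $\Hom_\La(-,R)\cong D(-\otimes_\La DR)$ is contravariant, so it is not coherent in the sense used here. The relevant coherent functor is $G=-\otimes_\La DR$.
\item There is no a priori reason for $G(X)=X\otimes_\La DR$ to be finite dimensional, or for $\bfL_G(X)$ to satisfy any chain condition. For a reduced pure-injective such as $P^{\bbN}$ with $P$ preprojective and $\Hom_\La(P,R)\neq0$, it is infinite dimensional. So no simple pair is produced.
\item Your alternative does not help either. A summand of $DX$ with dcc says nothing about a coherent functor being simple modulo $\C_X$.
\end{itemize}

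The same gap recurs in your last paragraph. Knowing that all indecomposables are simply reflexive does not make a module $X'$ without indecomposable summands simply reflexive. Finite Cantor--Bendixson rank, together with the isolation condition, is information about the Ziegler topology (essentially Geigle's Krull--Gabriel dimension computation). It cannot be read off from the list of points.

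The missing idea is to split by the torsion part rather than by the trisection.
\begin{itemize}
\item If $\frt(X)\neq 0$: $\frt(X)$ is pure in $X$ and has a finite length or Pr\"ufer summand by Lemma~\ref{le:torsion-indec}. That summand is pure-injective, hence splits off $X$. This is exactly the argument you use in (c) for $X^\vee$, applied to $X$ itself.
\item If $\frt(X)=0$: $DX$ is divisible, hence $\Sigma$-pure injective by Proposition~\ref{pr:divisible-pure-inj}. So $\bfL(DX)$ has dcc and $\bfL(X)\cong\bfL(DX)^{\op}$ has acc. Lemma~\ref{le:simply-reflexive} then makes $X$ simply reflexive.
\item In the second case, $X^\vee$ is a summand of the divisible module $DX$, so it is preinjective, Pr\"ufer or $Q$ by Theorem~\ref{th:decompos}. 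Your step (d) then finishes the argument.
\end{itemize}

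The same dichotomy shows that every non-zero pure-injective module has an indecomposable summand. This is precisely what your Zorn argument needs to force $X'=0$.

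For the uniqueness of the $X_i$, Krull--Schmidt--Azumaya does not apply directly. The object $T_X$ is the injective envelope of $\bigoplus_i T_{X_i}$, not this sum itself. You need the Gabriel--Oberst result on injective envelopes of coproducts of indecomposable injectives in a Grothendieck category.
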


\begin{proof}
  For the list of indecomposable pure-injectives, see
  \cite{DR1976,Ok1980,Pr1988}, but implicitly also \cite{Ge1985}. We
  argue as follows. Let $M$ be indecomposable and
  pure-injective. Suppose first that $\frt(M)\neq 0$. In this case $M$
  is torsion and belongs to the list, because $\frt(M)$ is a pure submodule of $M$ and
  contains an indecomposable pure-injective summand, which is either
  in $\R\vee\I$ or a Pr\"ufer module; see
  Lemma~\ref{le:torsion-indec}. Now suppose that $M$ is
  torsionfree. Then its dual $DM$ is divisible and therefore
  $\Si$-pure injective, so satisfies dcc on subgroups of finite
  definition by Proposition~\ref{pr:divisible-pure-inj}. Thus $M$
  satisfies acc on subgroups of finite definition and is therefore
  simply reflexive by Lemma~\ref{le:simply-reflexive}. Now $M^\vee$ is
  a direct summand
  of $DM$ by 
  Proposition~\ref{pr:duality}, so is either preinjective, a Pr\"ufer module, or
  generic, by Theorem~\ref{th:decompos}. It follows that $M$ is either
  preprojective, an adic module, or generic. Thus the above list of
  indecomposable pure-injective $\La$-modules is complete.

  For the second statement we claim that every non-zero pure-injective
  $\La$-module $M$ admits an indecomposable direct summand. We argue
  as before. The claim is clear when $\frt(M)\neq 0$. If $M$ is
  torsionfree, then it satisfies acc on subgroups of finite definition
  and has therefore an indecomposable direct summand by
  Lemma~\ref{le:simply-reflexive}.  This yields the claim, and then
  one applies \cite[Theorem~8.28]{JL1989} which reduces the assertion
  to a general fact about injective objects in Grothendieck categories
  due to Gabriel and Oberst \cite{GO1966}.
 
  A different strategy for proving the theorem follows from
  \cite[Theorem~8.53]{JL1989}. Their result provides a complete
  description of the pure-injective modules over a ring $A$ when the
  category $\Add(\mod A^\op,\Ab)$ has Krull dimension. For  a
  tame hereditary algebra $A$ this category has Krull dimension $2$ by
  \cite{Ge1985}, and the explicit description of the Krull filtration
  provides the list of the indecomposable pure-injectives.
\end{proof}

\begin{rem}
  The pure-injective divisible modules are completely described by
  Theorem~\ref{th:decompos}. The reduced regular ones are investigated
  in {\cite[2.2, 2.3, 2.4]{R1}}. They are precisely the modules in
  $\Prod\R$ and admit a description that is somewhat dual to
  Proposition~\ref{pr:tor-reg}. Here, $\Prod\R$  denotes the smallest full
subcategory of all $\La$-modules that contains $\R$ and is closed under all products and
direct summands. Then  every reduced regular module has a
  unique decomposition $M=\prod_{x\in\mathbb X}M_x$ where
  $M_x\in\Prod{\R_x}$, and there are pure exact sequences
  $0\to L_x\to M_x\to N_x\to 0$ where $L_x$ is a direct sum of modules
  in ${\R_x}$ and adic modules obtained from ${\R_x}$, and $N_x$ is a
  divisible module.\qed
\end{rem}

\begin{rem}
  For any ring the set of isomorphism classes of indecomposable
  pure-injective modules carries a topology, which Ziegler introduced
  in model-theoretic terms \cite{Zi1984}. The closed subsets of this
  \emph{Ziegler spectrum} are precisely the sets of indecomposables
  lying in a definable subcategory \cite{CB2}.  A complete description of this
  spectrum is known only in few cases, including rings of finite
  representation type (when it is discrete) or Dedekind domains. For a
  tame hereditary algebra the Ziegler spectrum has been determined
  independently by Prest \cite{Pr1998} and Ringel \cite{Ri1998}.
\end{rem}

The following diagram shows how the indecomposable pure-injective
$\La$-modules are distributed among the various subcategories of
$\Mod\La$. 
\medskip

\begin{center}
    \begin{tikzpicture}
        \draw[black,-] (-3,0) -- (3,0);
        \draw[black,-] (-3,0) -- (0,3);
        \draw[black,-] (0,3) -- (3,0);
        \draw[black,-] (1,0) -- (2, 1);
        \draw[black,-] (-1,0) -- (1,2);
        \draw[black,-] (-1,0) -- (-2, 1);
        \draw[black,-] (1,0) -- (-1,2);
        \node[black] at (1,1) {$\bar S$};
        \node[black] at (-1,1) {$\hat S$};
        \node[black] at (-2,0.45) {$\mathcal{P}$};
        \node[black] at (0,0.45) {$\mathcal{R}$};
        \node[black] at (2,0.45) {$\mathcal{I}$};
        \node[black] at (0,2) {$Q$};
        \draw[black,decorate,decoration={brace}] (-3,0) -- (-1,2);
        \draw[black,decorate,decoration={brace}] (-1,2) -- (0,3);
        \draw[black,decorate,decoration={brace}] (0,3) -- (1,2);
        \draw[black,decorate,decoration={brace}] (1,2) -- (2,1);
        \draw[black,decorate,decoration={brace}] (2,1) -- (3,0);
        \node[black] at (-2.2,1.2) {$\mathcal{E}$};
        \node[black] at (-0.7,2.7) {$\mathcal{D}$};
        \node[black] at (0.7,2.7) {$\bar{\mathcal{P}}$};
        \node[black] at (1.7,1.7) {$\bar{\mathcal{R}}$};
        \node[black] at (2.7,0.7) {$\bar{\mathcal{I}}$};
        \node[black] at (1.7,2.7) {$\mathcal{C}$};
        \draw[black,-] (1.7,2) -- (1.7,2.4);
        \draw[black,-] (1,2.7) -- (1.4,2.7);
\end{tikzpicture}
\end{center}
The direction of non-zero morphisms is from left to right. The
horizontal layers of the diagram reflect the Krull filtration of the
functor category $\Add(\mod \La^\op,\Ab)$, which corresponds to the
\emph{Cantor--Bendixson filtration} of the Ziegler
spectrum. Specifically, the first layer consists of all
indecomposables of finite length, which are precisely the isolated points.  The
second layer consists of the Pr\"ufer and the adic modules, which are
the isolated points after removing the first layer.  The final layer is given by
the generic module.

\subsection*{Acknowledgement}
The authors are grateful to Maximilian Kaipel and an anonymous referee for
several helpful comments concerning the exposition of this material.
The work was supported by the Deutsche Forschungsgemeinschaft
(Project-ID 491392403 – TRR~358) and by the Program FIS2021 of the
Italian Ministry of University and Research (Project LAVIE, FIS
00001706).

\appendix

\section{Direct sums of \texorpdfstring{$\Sigma$}{Sigma}-pure injectives}

The article \cite{Ri1979} of Ringel was presented at a conference on abelian groups, and explains to what extent the well-developed theory of abelian groups extends to the wider class of modules over a finite dimensional tame hereditary algebra. The connections being clearest when one restricts to the category of regular modules.

Of particular importance when studying large abelian $p$-groups is the notion of a basic subgroup. This is a pure subgroup which is a direct sum of finite $p$-groups such that the cokernel is divisible \cite[Chapter 5.5]{Fu2015}. As Fuchs writes \cite[p.172]{Fu2015}
\begin{quote}
Kulikov\dots developed the theory of basic subgroups\,\dots. Szele should be given the credit for recognizing their utmost importance\,\dots. It is hard to think of the theory of $p$-groups without basic subgroups.
\end{quote}
In \cite[Theorem 1.G]{Ri1979} Ringel extended this idea to modules over a finite dimensional algebra, showing that every module admits a pure submodule which is a direct sum of finite dimensional modules and whose cokernel has no finite dimensional direct summand. In both cases one can thus reduce problems to `purely infinite' modules. We note that these ideas have been used on several occasions in the main text, notably in Propositions~\ref{pr:preinj} and \ref{pr:tor-reg}, and Corollary~\ref{co:endofinite}. 

In this appendix we revisit this construction in the context of a locally finitely presented Grothendieck category. In Proposition~\ref{pr:X-basic} we show that, given a family $\X$ of indecomposable $\Sigma$-pure injectives, every object contains a pure subobject which is  a coproduct of elements from $\X$ and satisfies a certain maximality condition. When $\add\X$ is the category of finite abelian $p$-groups, respectively finite dimensional modules, one recovers the situations described above, whereas the pure exact sequence from Proposition~\ref{pr:tor-reg} arises when we take $\add\X$ to be a tube for a tame hereditary algebra.

In Proposition~\ref{pr:preinj} we showed that filtered colimits of finite dimensional preinjective modules are direct sums of preinjective modules. In other words, taking $\add\X=\I$, we have $\bar\I=\Add\X$. We generalise these ideas in Theorem~\ref{th:loc-T-nilp}, together with Lemma~\ref{le:no-pure-ext}. Finally, Theorem~\ref{th:endofinite-definable} gives an alternative approach to Corollary~\ref{co:endofinite}.

\subsection*{Basic subobjects}

Let $\A$ be a locally finitely presented Grothendieck category, and write $\fp\A$ for the subcategory of finitely presented objects.

An exact sequence $0\to X\to Y\to Z\to 0$ in $\A$ is called \emph{pure} provided it is a filtered colimit of split exact sequences, equivalently the sequence $0\to\Hom(C,X)\to\Hom(C,Y)\to\Hom(C,Z)\to0$ is exact for all $C\in\fp\A$ (see for example \cite[\S12.1]{Kr2022}). It follows that pushouts and pullbacks of pure exact sequences are again pure exact, and pure monomorphisms, respectively pure epimorphisms, are closed under composition. In other words, the pure exact sequences form an exact structure on $\A$, and we have the associated sub-bifunctor $\PExt^1(-,-)$ of $\Ext^1(-,-)$. We call $X\in\A$ \emph{pure-injective} provided $\PExt^1(-,X)=0$, and \emph{$\Sigma$-pure injective} if $X^{(I)}$ is pure-injective for all sets $I$.

We fix a collection $\X$ of indecomposable $\Sigma$-pure injective objects in $\A$ and write $\add\X$, respectively $\Add\X$, for the smallest additive subcategory containing all finite, respectively arbitrary, coproducts of objects from $\X$.

We observe that every $M\in\add\X$ is a finite direct sum of objects in $\X$. For, every indecomposable pure-injective object has local endomorphism ring by Lemma~\ref{le:end-generic}, so the result follows from the Krull--Remak--Schmidt Theorem, \cite[Corollary 12.7]{AF1992}. The analogous result for $\Add\X$ fails in general, so we introduce the subcategory $\Coprod\X$ given by arbitrary coproducts of objects from $\X$. 

We write $\PExt^1(\X,\Add\X)=0$ provided every pure extension of an object in $\X$ by something in $\Add\X$ is split. This condition is clearly satisfied if $\X$ is finite, since then every object in $\Add\X$ is pure-injective, but is also satisfied if $\X\subseteq\fp\A$, since then every $X\in\X$ is pure-projective.

Given $M\in\A$ we call $M'\subseteq M$ an \emph{$\X$-basic subobject} provided it is a pure subobject with $M'\in\Coprod\X$, and there is no larger pure subobject $M'\oplus X$ with $X\in\X$.

\begin{prop}\label{pr:X-basic}
Every $M\in\A$ admits an $\X$-basic subobject. Suppose instead $N\in\Coprod\X$ is a pure subobject of $M$. If $M/N$ has no direct summand from $\X$, then $N$ is $\X$-basic. The converse holds whenever $\PExt^1(\X,\Add\X)=0$.
\end{prop}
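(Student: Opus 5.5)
The plan has three parts: existence by Zorn's Lemma, then the sufficient condition, then the converse under $\PExt^1(\X,\Add\X)=0$.

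\emph{Existence.} Consider families $(X_i)_{i\in I}$ of subobjects of $M$ with each $X_i\in\X$, such that the sum $\sum_i X_i$ is direct and is a pure subobject of $M$. Order them by inclusion of families.
\begin{itemize}
\item The key observation is that a union of a chain of such families again qualifies. Directness of a sum is checked on finite subfamilies.
\item The sum over the union is the filtered colimit of the sums over the members of the chain. A filtered colimit of pure monomorphisms into $M$ is pure, since purity is tested by $\Hom(C,-)$ for $C\in\fp\A$, and these functors commute with filtered colimits.
\item Zorn's Lemma then gives a maximal family, with sum $M'\in\Coprod\X$ pure in $M$.
\end{itemize}
Now suppose $M'\oplus X$ is a larger pure subobject with $X\in\X$. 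Enlarging the family by $X$ (more precisely, by its image in $M$) keeps the sum direct and pure, contradicting maximality. So $M'$ is $\X$-basic.

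\emph{Sufficient condition.} Let $N\in\Coprod\X$ be pure in $M$ and suppose $N$ is not $\X$-basic. Then there is a pure subobject $N\oplus X\subseteq M$ with $X\in\X$. The composite $X\to N\oplus X\to M\to M/N$ is a monomorphism, with image $(N\oplus X)/N$. I claim this image is a pure subobject of $M/N$. Pushing out along the pure epimorphism $M\to M/N$ and using that pure exact sequences form an exact structure, $M\to M/N$ restricted to $N\oplus X$ gives a pure exact sequence $0\to X\to M/N\to M/(N\oplus X)\to0$. This is the usual $3\times 3$ or Noether-isomorphism argument within the pure exact structure, and checking it carefully is the one technical point. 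Since $X$ is pure-injective, this sequence splits, so $X$ is a direct summand of $M/N$, a contradiction.

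\emph{Converse under $\PExt^1(\X,\Add\X)=0$.} Let $N$ be $\X$-basic and suppose $M/N=X\oplus Y$ with $X\in\X$. Pull back the pure exact sequence $0\to N\to M\to M/N\to0$ along $X\to M/N$. This gives a pure exact sequence $0\to N\to E\to X\to 0$ with $E\subseteq M$.
\begin{itemize}
\item Since $N\in\Coprod\X\subseteq\Add\X$, the hypothesis splits this sequence, so $E\cong N\oplus X$ as subobjects, with $N$ the given summand.
\item $E$ is pure in $M$: it is the preimage of the pure subobject $X$ of $M/N$ (a direct summand), and a pure monomorphism composed with a pure monomorphism is pure. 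Concretely, $E\to M$ is the pullback of the split mono $X\to M/N$ along $M\to M/N$, and its cokernel is $Y$, with $M\to Y$ a composite of pure epimorphisms.
\end{itemize}
This contradicts $N$ being $\X$-basic.

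The main obstacle is the purity bookkeeping in the middle step: showing $(N\oplus X)/N$ is pure in $M/N$. I will handle it via the exactness of $\Hom(C,-)$ on the relevant sequences for $C\in\fp\A$, using a snake-lemma chase.
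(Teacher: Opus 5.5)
Your proof is correct and follows the paper's argument step for step: Zorn's Lemma on families of subobjects from $\X$ whose sum is a pure subobject for existence, pure-injectivity of $X$ applied to a pure monomorphism $X\to M/N$ for the sufficient condition, and for the converse the pullback splitting by $\PExt^1(X,N)=0$, combined with purity of the composite $M\to M/N\to M/(N\oplus X)$. The step you flag as the technical point is immediate. In the paper, $0\to X\to M/N\to M/(N\oplus X)\to 0$ is simply the pushout of the pure sequence for $N\oplus X\subseteq M$ along the projection $N\oplus X\to X$, not a pushout along $M\to M/N$. Alternatively, every map $C\to M/(N\oplus X)$ with $C\in\fp\A$ lifts to $M$, and hence to $M/N$, so no snake-lemma chase is needed.
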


\begin{proof}
We apply Zorn's Lemma as in the proof of Corollary~\ref{co:decomposition}. Let $\mathcal U$ denote the set of subobjects of $M$ which are isomorphic to an object from $\X$, and consider
\[ \mathcal S \coloneqq \{ \mathcal V \subseteq\mathcal U \mid \textstyle{\coprod_{L\in\mathcal V}L\to M} \textrm{ is a pure monomorphism} \}. \]
Now $\mathcal S$ is non-empty (as zero is a pure subobject of $M$), and is partially ordered via inclusion. Also, given a chain $\mathcal V_i$ in $\mathcal S$, let $\mathcal V=\bigcup_i\mathcal V_i$ be its union. Then $\coprod_{L\in\mathcal V}L\to M$ is again a pure monomorphism, so $\mathcal V\in\mathcal S$. Zorn's Lemma now yields a maximal element $\mathcal W\in\mathcal S$, in which case $M'=\coprod_{L\in\mathcal W}L$ is an $\X$-basic subobject of $M$.

Now let $N\subseteq M$ be a pure subobject with $N\in\Coprod\X$. If $N$ is not $\X$-basic, then we can find a larger pure subobject $N\oplus X$ for some $X\in\X$. The pushout along $N\oplus X\to X$ of the pure monomorphism $N\oplus X\to M$ yields a pure, and hence split, monomorphism $X\to M/N$.

Conversely, assume $N$ is $\X$-basic and let $X\to M/N$ be a pure monomorphism with $X\in\X$. The pullback lies in $\PExt^1(X,N)$, so if this vanishes, then $N\oplus X\subseteq M$. Moreover, as the composite $M\to M/N\to M/(N\oplus X)$ is a pure epimorphism, $N\oplus X\subseteq M$ is a pure subobject, a contradiction.
\end{proof}

\begin{rem}
If $\add\X$ is the category of finite abelian $p$-groups, then we recover the classical notion of a basic subgroup. If $\add\X=\mod\Lambda$ for a finite dimensional algebra $\Lambda$, then we obtain Ringel's construction from \cite[Theorem 1.G]{Ri1979}. 

If $\Lambda$ is a finite dimensional hereditary algebra and $\add\X=\I$ is the category of finite dimensional preinjective modules, then we obtain all but the uniqueness part of Proposition~\ref{pr:preinj}. If instead $\add\X$ is a tube, then we obtain the pure exact sequence from Proposition~\ref{pr:tor-reg}.
\end{rem}

In general there may be proper inclusions between $\X$-basic subobjects. Let $\add\X=\fp\A$ be a tube of rank one, for example the category of finite abelian $p$-groups, or finite length modules over a discrete valuation ring. Following \cite[Example 1.G]{Ri1979} let $M=\bigoplus_nS_n$, where $S_n$ is the unique indecomposable of length $n$. Let $\iota\in\End(M)$ be the sum of the inclusions $\iota_n\colon S_n\to S_{n+1}$. Then $1+\iota$ is a pure monomorphism with cokernel the Pr\"ufer module $\bar S$.

\begin{lem}\label{le:no-pure-ext}
Assume $\PExt^1(\X,\Add\X)=0$. If $\Add\X$ is closed under filtered colimits, then $\Add\X=\Coprod\X$ and there are no proper inclusions between $\X$-basic subobjects.
\end{lem}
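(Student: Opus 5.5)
We prove both assertions with the criterion from Proposition~\ref{pr:X-basic}: since $\PExt^1(\X,\Add\X)=0$, a pure subobject $N\in\Coprod\X$ of $M$ is $\X$-basic if and only if $M/N$ has no direct summand from $\X$.

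\emph{Step 1: $\Add\X=\Coprod\X$.} Take $M\in\Add\X$, so $M$ is a direct summand of some $\coprod_i X_i$ with $X_i\in\X$. Proposition~\ref{pr:X-basic} gives an $\X$-basic subobject $N\subseteq M$ with $N\in\Coprod\X$. The cokernel $M/N$ is a pure quotient of $M$, hence a filtered colimit of split quotients of $M$. A split quotient of $M$ is a direct summand of $M$, so it lies in $\Add\X$. Since $\Add\X$ is closed under filtered colimits, $M/N\in\Add\X$. Next, the sequence $0\to N\to M\to M/N\to 0$ should split. It is pure, and $N\in\Coprod\X\subseteq\Add\X$. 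Each $X\in\X$ is $\Sigma$-pure injective, and I will check that a coproduct of such objects inside $\Add\X$ is pure-injective; if that fails, I will instead split the sequence by writing $M/N$ as a filtered colimit of finite coproducts. The hypothesis $\PExt^1(\X,\Add\X)=0$ is what handles the finite pieces.

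This splitting is the step I expect to be the main obstacle. The approach I will try first avoids splitting altogether. Suppose $M/N\neq 0$. Then $M/N\in\Add\X$, and I claim it has a direct summand from $\X$. An object of $\Add\X$ is a summand of a coproduct of $\Sigma$-pure injectives. To extract an indecomposable summand $X\in\X$, I will use an argument via injective envelopes in the functor category, in the style of Lemma~\ref{le:simply-reflexive}. This contradicts the criterion above, so $M/N=0$ and $M=N\in\Coprod\X$.

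\emph{Step 2: there are no proper inclusions between $\X$-basic subobjects.} Let $N\subseteq N'$ both be $\X$-basic in $M$. Since $N\subseteq M$ is pure, $N\subseteq N'$ is pure, and $N'/N$ is a pure subobject of $M/N$. As in Step 1, $N'/N$ is a pure quotient of $N'\in\Coprod\X$, so $N'/N\in\Add\X=\Coprod\X$. Assume $N'/N\neq 0$ and choose a summand $X\in\X$ of it. The composite $X\to N'/N\to M/N$ is then a pure monomorphism. Its pullback to $M$ is a pure extension of $X$ by $N$, which splits because $\PExt^1(\X,\Add\X)=0$. This gives a larger pure subobject $N\oplus X\subseteq M$, contradicting the maximality of $N$. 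Therefore $N=N'$.
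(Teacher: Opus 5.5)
Your skeleton works. Running the basic-subobject argument directly on $M\in\Add\X$ in Step~1 is legitimate, since nothing in it uses $M\in\Coprod\X$. It also spares you the paper's second argument: the paper first treats $M\in\Coprod\X$, deduces the statement about inclusions, and only then gets $\Add\X=\Coprod\X$ by taking $\X$-basic subobjects of both summands of $M\oplus N\in\Coprod\X$. Your Step~2 is then fine.

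\textbf{Main gap.} The crux of Step~1 is only promised: that a non-zero $Y=M/N\in\Add\X$ has a direct summand from $\X$. The tool you point to does not apply. Lemma~\ref{le:simply-reflexive} needs a pure-injective module admitting a simple pair, and an object of $\Add\X$ need not be pure-injective when $\X$ is infinite. For the same reason, your tentative claim that coproducts of objects of $\X$ are pure-injective is false; fortunately the splitting of $0\to N\to M\to M/N\to 0$ is not needed.

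The missing ingredient is Azumaya's theorem, which is what the paper uses. Suppose $Y\oplus Z\cong\coprod_i X_i$ with each $\End(X_i)$ local, which holds here by Lemma~\ref{le:end-generic}. Then every non-zero direct summand of $\coprod_i X_i$, in particular $Y$, has a direct summand isomorphic to some $X_i$. With this citation the contradiction with Proposition~\ref{pr:X-basic} goes through.

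\textbf{Second, a justification problem.} The inference ``$M/N$ is a pure quotient of $M$, hence a filtered colimit of split quotients of $M$'' is not valid from purity alone. A pure subobject need not be a directed union of direct summands: for example, $\mathbb{Z}_{(p)}\subseteq\hat{\mathbb{Z}}_p$ is pure, while $\hat{\mathbb{Z}}_p$ is indecomposable.

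What makes the claim true here is the shape of $N$. Write $N=\coprod_{j\in J}X_j$. Each finite subcoproduct $N_F$ is pure-injective and pure in $M$, hence a direct summand of $M$. So $M/N=\colim_F M/N_F$ is a filtered colimit of direct summands of $M$, and therefore lies in $\Add\X$. The same justification is needed for $N'/N$ in your Step~2.
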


\begin{proof}
Let $M'$ be an $\X$-basic subobject of some $M\in\Coprod\X$. Then $M/M'$ is a filtered colimit of direct summands of $M$, so our assumption gives $M/M'\in\Add\X$. If $M/M'$ is non-zero, then by Azumaya's Theorem (see \cite[Theorem 12.6]{AF1992}) it contains a direct summand from $\X$, in contradiction to Proposition~\ref{pr:X-basic}. We deduce that there are no proper inclusions between $\X$-basic subobjects.

It follows that $\Add\X=\Coprod\X$. For, suppose $M\oplus N\in\Coprod\X$ and let $M'\subseteq M$ and $N'\subseteq N$ be $\X$-basic subobjects. If $X\in\X$ is a direct summand of $(M/M')\oplus(N/N')$, then it must be a direct summand of either $M/M'$ or $N/N'$ as $\End(X)$ is local. This cannot happen, so $M'\oplus N'$ is an $\X$-basic subobject of $M\oplus N$ by Proposition~\ref{pr:X-basic}. Then $M'=M$ and $N'=N$ by the first part of the proof.
\end{proof}

\subsection*{Local T-nilpotence}

A morphism $f\colon X\to Y$ is \emph{radical} if for all $g\colon Y\to X$ the composite $gf$ lies in the Jacobson radical of $\End(X)$. If $X$ and $Y$ have local endomorphism rings, then $f$ is radical if and only if it is a non-isomorphism. By Lemma~\ref{le:end-generic} the latter holds if $X$ and $Y$ are both $\Sigma$-pure injective.

The family $\X$ is said to be \emph{locally T-nilpotent} if, given a sequence of radical morphisms $f_t\colon X_t\to X_{t+1}$ for $t\geq1$ between objects in $\X$, for each morphism $g\colon C\to X_1$ with $C\in\fp\A$ there exists an $n$ such that $f_n\cdots f_1g=0$. For example, this holds for the family of indecomposable preinjective modules over a finite dimensional hereditary algebra.

We observe that every finite family $\X$ is locally T-nilpotent. For, every $M\in\Add\X$ is $\Sigma$-pure injective, so pure subobjects are direct summands by Remark~\ref{re:quot-lattice}, and we can apply \cite[Proposition 4.2]{AH2003}. Also, every locally T-nilpotent family $\X$ satisfies $\Add\X=\Coprod\X$. For, local direct summands of $M\in\Add\X$ are direct summands by \cite[Proposition 4.2]{AH2003}, so \cite[Corollary 2.3]{GG} implies that the decomposition of every $N\in\Coprod\X$ complements direct summands.

\begin{lem}\label{lem:loc-T-nilp}
Let $\X$ be locally T-nilpotent, and let $f_t\colon Y_t\to Y_{t+1}$ for $t\geq1$ be a sequence of radical morphisms between objects $Y_t\in\add\X$. Then for each morphism $g\colon C\to Y_1$ with $C\in\fp\A$ there exists an $n$ such that $f_n\cdots f_1g=0$.
\end{lem}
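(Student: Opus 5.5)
We reduce to the case of indecomposable objects by a König's lemma (compactness) argument on finite trees. Write each $Y_t=\bigoplus_{j\in J_t}X_{t,j}$ as a finite direct sum of objects of $\X$; this is possible by the Krull--Remak--Schmidt observation above. With respect to these decompositions, each $f_t$ becomes a matrix of morphisms $f_{t}^{ij}\colon X_{t,i}\to X_{t+1,j}$. Since $f_t$ is radical, every entry $f_t^{ij}$ is radical. For the component $f_t^{ij}$, compose on the left with the inclusion of $X_{t,i}$ and on the right with the projection onto $X_{t+1,j}$. Then $gf$ radical for all $g$ passes to the entries, because the radical of $\End(Y)$ restricted to a summand gives the radical of the summand's endomorphism ring. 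Between objects with local endomorphism rings, a radical morphism is just a non-isomorphism.

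Next, decompose $g=(g_i)$ with $g_i\colon C\to X_{1,i}$. The composite $f_n\cdots f_1 g$ has components given by finite sums over paths
\[ f_n^{j_nj_{n+1}}\cdots f_1^{j_1j_2}g_{j_1}, \]
indexed by sequences $(j_1,\dots,j_{n+1})$ with $j_t\in J_t$. Call a finite path $(j_1,\dots,j_{m})$ \emph{live} if the corresponding composite $C\to X_{m,j_m}$ is non-zero. The live paths form a subtree of the tree of all finite paths, which is finitely branching since each $J_t$ is finite. Prefixes of live paths are live, because if a composite vanishes then so does any further composite.

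Suppose, for contradiction, that $f_n\cdots f_1g\neq0$ for all $n$. Then for each $n$ some component is non-zero, so some path of length $n+1$ is live. The tree of live paths is therefore infinite and finitely branching, and König's lemma gives an infinite branch $(j_1,j_2,\dots)$ all of whose finite truncations are live. Now apply local T-nilpotence to the sequence of radical morphisms $f_t^{j_tj_{t+1}}\colon X_{t,j_t}\to X_{t+1,j_{t+1}}$ between objects of $\X$, together with $g_{j_1}\colon C\to X_{1,j_1}$. It yields an $n$ with $f_n^{j_nj_{n+1}}\cdots f_1^{j_1j_2}g_{j_1}=0$, contradicting liveness.

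The subtle point is the definition of liveness. A single path composite can vanish even though the sum over all paths does not; conversely, individual path composites can be nonzero while their sum vanishes. So the contradiction must rest on path composites themselves, not on the sums. The needed implication is this: if for some $n$ every path composite of length $n+1$ vanishes, then $f_n\cdots f_1g=0$, since each component is a finite sum of path composites. Hence if $f_n\cdots f_1g\ne0$ for all $n$, then for every $n$ there is a live path of length $n+1$, which is exactly what König's lemma requires. The only other check is that the matrix entries of a radical morphism are radical; this I expect to be routine given local endomorphism rings from Lemma~\ref{le:end-generic}.
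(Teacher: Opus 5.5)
Your proposal is correct and follows essentially the same route as the paper: decompose each $Y_t$ into finitely many objects of $\X$, form the finitely branching tree of paths whose composites from $C$ are non-zero, apply K\"onig's Lemma, and derive a contradiction with local T-nilpotence along the resulting infinite branch. You also spell out that the matrix entries of a radical morphism are radical (via $eJ(R)e=J(eRe)$), a step the paper leaves implicit.
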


\begin{proof}
We fix (finite) indecomposable decompositions $Y_t=\bigoplus_uY_{t,u}$ and thus regard each morphism as a matrix $f_t=(f_{t,uv})$. We next form a tree having root $C$ and vertices $(Y_{1,u_1},\ldots,Y_{t,u_t})$ such that the induced morphism $C\to Y_{1,u_1}\to\cdots\to Y_{t,u_t}$ is non-zero. Clearly this tree is connected and locally finite. If $f_n\cdots f_1g$ is non-zero for all $n$, then this tree must also be infinite, so by König's Lemma it contains an infinite path, contradicting the fact that $\X$ is locally T-nilpotent.
\end{proof}

\begin{thm}\label{th:loc-T-nilp}
Assume $\PExt^1(\X,\Add\X)=0$. If $\X$ is locally T-nilpotent, then $\Add\X$ is closed under filtered colimits. The converse holds if $\X\subseteq\fp\A$.
\end{thm}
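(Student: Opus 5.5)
For the forward direction, I assume $\PExt^1(\X,\Add\X)=0$ and that $\X$ is locally T-nilpotent, and take a filtered colimit $M=\colim_i M_i$ with $M_i\in\Add\X$. As observed before Lemma~\ref{lem:loc-T-nilp}, local T-nilpotence gives $\Add\X=\Coprod\X$, so each $M_i$ is a coproduct of objects from $\X$. Proposition~\ref{pr:X-basic} provides an $\X$-basic subobject $M'\subseteq M$, and since $\PExt^1(\X,\Add\X)=0$ the converse part of that proposition says $M/M'$ has no direct summand from $\X$. It therefore suffices to show that every non-zero object $N$ which is a pure quotient of $M$ (for instance $N=M/M'$) has a direct summand in $\X$. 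This forces $M/M'=0$, so $M=M'\in\Coprod\X$.

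To find such a summand, I would use $\Sigma$-pure injectivity and local T-nilpotence together, in the spirit of the tree argument in Lemma~\ref{lem:loc-T-nilp}.

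1. Take a non-zero morphism $g\colon C\to N$ with $C\in\fp\A$. It factors through some $M_i$, and then through a finite subcoproduct $Y_1\in\add\X$ of $M_i$; write this factorisation as $C\to Y_1\to N$.
2. Try to produce an $X\in\X$ with a pure monomorphism $X\to N$. If the map $Y_1\to N$, or rather some indecomposable component of it, is not a pure monomorphism, then it can be "pushed further" along the colimit system. Concretely, I would show the component factors, as far as its effect on $C$ is concerned, through a radical map $Y_1\to Y_2$ with $Y_2\in\add\X$ also mapping to $N$. Here I compare via the transition maps $M_i\to M_j$ and use that a split component would give purity.
3. Iterating step 2 produces a chain of radical maps $Y_1\to Y_2\to\cdots$ whose composite with $C\to Y_1$ never vanishes, because it still composes to the non-zero map $g$ into $N$. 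This contradicts Lemma~\ref{lem:loc-T-nilp}. Hence at some finite stage an indecomposable $X$ maps to $N$ with a non-radical behaviour, i.e.\ it splits off.

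The main obstacle is step 2: making precise that "non-split along the colimit" yields a radical map in $\add\X$ while keeping the map to $N$ compatible. To handle the splitting step I would use Remark~\ref{re:quot-lattice}, applied to the $\Sigma$-pure injective objects $X$: a map $X\to N$ that is a pure monomorphism splits.

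For the converse, assume $\X\subseteq\fp\A$ and that $\Add\X$ is closed under filtered colimits, and suppose $\X$ is not locally T-nilpotent. Then there are radical maps $f_t\colon X_t\to X_{t+1}$ and a map $g\colon C\to X_1$ with $f_n\cdots f_1g\neq0$ for all $n$. I form $M=\colim_t X_t$, which lies in $\Add\X=\Coprod\X$ (Lemma~\ref{le:no-pure-ext}), and $M\neq0$ since the image of $g$ survives. Because $M$ is a coproduct of finitely presented objects from $\X$, some $X\in\X$ is a direct summand of $M$. As $X$ is finitely presented, the split inclusion $X\to M$ factors through some $X_t$, giving $X\to X_t\to M\to X$ equal to the identity. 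Writing $M\to X$ on $X_t$ as $X_t\to X_s\to M\to X$ for large $s$ (again by finite presentation of $X_t$, so that the two maps $X_t\to X$ agree after passing to some $X_s$), the identity of the local ring $\End(X)$ factors through the radical map $f_{s-1}\cdots f_t$. This is impossible, and the converse follows.
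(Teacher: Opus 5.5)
Your converse direction is correct and is essentially the paper's argument. One simplification: the detour through finite presentation of $X_t$ is unnecessary, because $g_t=g_sf_{s-1}\cdots f_t$ holds by the cocone property, and already $s=t+1$ makes $1_X$ factor through the radical map $f_t$.

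The forward direction, however, has a genuine gap at step~2, which you flag yourself, and the plan as written would not close it. There are three problems.
\begin{enumerate}
\item You only have a colimit presentation of $M$, not of the pure quotient $N$. So whether a component of $Y_1\to N$ is a pure monomorphism cannot be tested along the system $(M_i)$. For instance, a summand of $M_i$ that maps purely into $M$ may lie in $M'$ and vanish in $N$.
\item Objects of $\X$ need not be finitely presented. Hence the restriction of a transition map $M_i\to M_j$ to a finite subcoproduct $Y_1$ need not land in a finite subcoproduct, and you get no genuine morphism $Y_1\to Y_2$ in $\add\X$. Maps compatible only ``as far as their effect on $C$ is concerned'' are not what Lemma~\ref{lem:loc-T-nilp} requires.
\item The dichotomy in step~3 is the wrong one. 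A single non-radical map out of an indecomposable summand does not split it off. What produces a summand is that \emph{all} later transition maps out of it are non-radical.
\end{enumerate}

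The missing idea is to work with a colimit presentation of the quotient itself. Each finite subcoproduct of the $\X$-basic subobject $M'$ is a pure subobject of $M$ lying in $\add\X$, hence pure-injective, hence a direct summand of $M$. Therefore $Y=M/M'$ is a filtered colimit of the complementary summands, so it lies in the closure of $\add\X$ under filtered colimits. You may then write $Y=\colim Y_\alpha$ with $Y_\alpha\in\add\X$, and $Y$ has no summand from $\X$ by Proposition~\ref{pr:X-basic}.

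Next prove the key claim: for each $\alpha$ there is $\beta\geq\alpha$ with $Y_\alpha\to Y_\beta$ radical. Suppose not. Since $Y_\alpha$ is a finite sum and radical maps form an ideal, some indecomposable summand $Y'_\alpha$ has every map $Y'_\alpha\to Y_\beta$ non-radical. Each such map is then a split monomorphism, because $\End(Y'_\alpha)$ is local. So $Y'_\alpha\to Y$ is a filtered colimit of split monomorphisms, hence pure, hence split since $Y'_\alpha$ is pure-injective. This is a contradiction.

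Iterating the claim gives a chain of radical transition maps $Y_\alpha\to Y_\beta\to Y_\gamma\to\cdots$ inside one system. For any $h\colon C\to Y$ represented by some $h_\alpha$, Lemma~\ref{lem:loc-T-nilp} then forces $h=0$. Hence $Y=0$, and $M=M'\in\Coprod\X$.
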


We observe that if $\X$ consists of finitely presented $R$-modules for some ring $R$, then this theorem is covered by \cite[Theorem 4.4 and Proposition 4.2]{AH2003}. Also, taking $\add\X=\I$ to be the category of preinjective modules over an hereditary algebra as in Proposition~\ref{pr:preinj}, we see that $\bar\I=\Coprod\X$, so every filtered colimit of preinjective modules is a direct sum of preinjective modules.

\begin{proof}
Assume first that $\Add\X$ is closed under filtered colimits, and that $\X\subseteq\fp\A$. Take an infinite chain of non-isomorphisms $f_t\colon X_t\to X_{t+1}$ between objects in $\X$. Let $M$ be the colimit, so with morphisms $g_t\colon X_t\to M$ satisfying $g_t=g_{t+1}f_t$. By assumption $M\in\Add\X$, which equals $\Coprod\X$ by Lemma~\ref{le:no-pure-ext}, so if $M\neq0$, then there is a split monomorphism $p\colon X\to M$ for some $X\in\X$. As $X$ is finitely presented we can write $p=g_tp_t=g_{t+1}f_tp_t$ for some $p_t\colon X\to X_t$. Both $p_t$ and $f_tp_t$ are necessarily split monomorphisms between indecomposables, so isomorphisms, but $f_t$ is a non-isomorphism, a contradiction. Thus $M=0$, and hence $\X$ is locally T-nilpotent.

Suppose now that $\X$ is locally T-nilpotent and write $\mathcal C$ for the closure of $\add\X$ under filtered colimits. Take $M\in\mathcal C$ and $M'\subseteq M$ an $\X$-basic subobject. Then $M'$, and hence also $Y\coloneqq M/M'$ are both filtered colimits of direct summands of $M$. It follows that $Y\in\mathcal C$, but now with the added property that it has no direct summand from $\X$, by Proposition~\ref{pr:X-basic}. We need to show that $Y=0$.

Write $Y=\colim Y_\alpha$ with each $Y_\alpha\in\add\X$. We first claim that, for each $\alpha$, there exists $\beta\geq\alpha$ such that the morphism $Y_\alpha\to Y_\beta$ is radical. If not, then there exists some $\alpha$ such that, for each $\beta\geq\alpha$, the morphism $Y_\alpha\to Y_\beta$ is not radical. As $Y_\alpha\in\add\X$ is a finite sum, there must some indecomposable direct summand $Y'_\alpha$ such that each induced morphism $Y'_\alpha\to Y_\beta$ is not radical. As $\End(Y'_\alpha)$ is local, each sum morphism must be a split monomorphism, so the map $Y'_\alpha\to Y$ is a pure monomorphism, and hence split as $Y'_\alpha\in\X$ is pure-injective. Contradiction.

Now take $C\in\fp\A$ and a morphism $h\colon C\to Y$, say represented by $h_\alpha\colon C\to Y_\alpha$. By the preceding claim we can find a sequence of radical morphisms $Y_\alpha\to Y_\beta\to Y_\gamma\to\cdots$, so Lemma~\ref{lem:loc-T-nilp} applies to give $h=0$. We conclude that $Y=0$ as required.
\end{proof}

\subsection*{Endofinite objects}

An object $X\in\A$ is called \emph{endofinite} provided for each $C\in\fp\A$ the $\End(X)$-module $\Hom(C,X)$ has finite length.  In this case we can define the \emph{character}
\[ \character_X \colon \fp\A \to \bbN, \qquad C \mapsto \ell_{\End(X)}\Hom(C,X). \]
We observe that if $X=U\oplus V$, then $\character_U\leq\character_X\leq\character_U+\character_V$. Also, the characters of $X$, $X^{(I)}$ and $X^I$ agree for all sets $I$. For details on characters we refer to \cite{CB3}.

We note that every endofinite object is necessarily $\Sigma$-pure injective. In fact, the main results of Section~\ref{se:purity} concerning subgroups of finite definition extend to the category $\A$. Explicitly, we call a functor $F\colon\A\to\Ab$ \emph{coherent} if it is the cokernel of $\phi^\ast\colon\Hom(C',-)\to\Hom(C,-)$ for some morphism $\phi\colon C\to C'$ in $\fp\A$. As in Lemma~\ref{le:coherent} the coherent functors form an abelian category, so the coherent subfunctors of $\Hom(C,-)$ form a modular lattice $\bfL_C$. We write $\bfL_C(X)$ for the evaluation at $X$, whose elements $F(X)$ are called \emph{subgroups of finite definition} of $\Hom(C,X)$.

Now Proposition~\ref{pr:subgroup-fin-def} shows that $X$ is $\Sigma$-pure injective if and only if it has the descending chain condition (dcc) on subgroups of finite definition, and $X$ \emph{endofinite} if and only if has both dcc and acc on subgroups of finite definition, in which case the lattice $\bfL_C(X)$ identifies with the lattice of $\End(X)$-submodules of $\Hom(C,X)$. Similarly, by Remark~\ref{re:quot-lattice}, pure subobjects of $\Sigma$-pure injectives are themselves $\Sigma$-pure injective.

We finish by applying the theory of $\X$-basic objects to obtain Corollary~\ref{co:endofinite}.

\begin{thm}\label{th:endofinite-definable}
Let $X$ be endofinite, and take $\X$ to be a representative set of indecomposable direct summands of $X$. Then $\X$ is locally T-nilpotent, and $\Add\X=\Coprod\X$ is a definable category.
\end{thm}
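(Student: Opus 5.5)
The plan is to prove three things in turn: local T-nilpotence of $\X$, the equality $\Add\X=\Coprod\X$, and definability. Throughout I use the character $\character_X$ and the fact that $X$ is $\Sigma$-pure injective.

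\emph{Finiteness of $\X$.} If $X=U\oplus V$ then $\character_U\le\character_X$. So for indecomposable summands $Y_1,\dots,Y_m$ of $X$ that are pairwise non-isomorphic, $Y_1\oplus\cdots\oplus Y_m$ is a summand of $X$ by the decomposition in Corollary~\ref{co:decomposition}. Hence $\sum_i\character_{Y_i}\le\character_X$. Choose a finite set of objects $C\in\fp\A$ generating $\A$ up to filtered colimits; this is possible when a single generator exists, for example $\Lambda$ in the module case. Each nonzero $Y_i$ has $\character_{Y_i}(C)\ge1$ for some such $C$, so $\X$ is finite.

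For a general locally finitely presented $\A$ I would argue differently and avoid needing finiteness. Take radical morphisms $f_t\colon X_t\to X_{t+1}$ in $\X$ and $g\colon C\to X_1$. Each $X_t$ is a summand of $X$, so $\Hom(C,X_t)$ is a finite-length $\End(X)$-module, namely $e_t\Hom(C,X)$ for an idempotent $e_t$. The images $f_n\cdots f_1g$ determine the chain of subgroups of finite definition $\Im\bigl(\Hom(X_{n+1},X)\to\Hom(C,X)\bigr)$ given by composition with $f_n\cdots f_1g$, which is descending in $n$. By dcc this chain stabilises. A Harada--Sai / Fitting type argument on the finite-length lattice, using that radical maps strictly shrink these submodules, then forces $f_n\cdots f_1g=0$ for large $n$.

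This is the main obstacle. The route I would try first is the following. Lift to endomorphisms of the single endofinite object $X^{(\bbN)}$, whose character equals $\character_X$. Encode the sequence $(f_t)$ as one endomorphism $\phi$ of $X^{(\bbN)}$ that shifts the $t$-th component into the $(t+1)$-st via $f_t$; then $\phi$ lies in the radical of $\End(X^{(\bbN)})$. Endofiniteness of $X^{(\bbN)}$ gives nilpotence of the radical on each finite-length $\Hom(C,X^{(\bbN)})$ by Nakayama, as in Lemma~\ref{le:end-generic}. Hence $\phi^n g=0$ for large $n$, which is exactly the required vanishing. The delicate point is checking that $\phi$ is indeed radical; I would verify this via local T-nilpotence of matrices over local rings, or reduce to the Jacobson radical of $\End_{\End(X)}$ acting on a finite-length module.

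\emph{Remaining assertions.} Since $\X$ consists of $\Sigma$-pure injectives, every object of $\Add\X$ is a summand of some $X^{(I)}$, so it is pure-injective and $\PExt^1(\X,\Add\X)=0$. Theorem~\ref{th:loc-T-nilp} then shows $\Add\X$ is closed under filtered colimits, and Lemma~\ref{le:no-pure-ext} gives $\Add\X=\Coprod\X$. $\Add\X$ is closed under products, since $X^I$ has the same character as $X$, is endofinite, decomposes by Corollary~\ref{co:decomposition}, and its summands lie in $\X$ by Lemma~\ref{le:simply-reflexive-endofinite}. It is closed under pure subobjects, since these are summands by Remark~\ref{re:quot-lattice}. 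Together with closure under filtered colimits, this gives that $\Add\X$ is definable.
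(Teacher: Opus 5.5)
Your handling of the remaining assertions is sound. $\PExt^1(\X,\Add\X)=0$ holds because everything in $\Add\X$ is a summand of some $X^{(I)}$. Theorem~\ref{th:loc-T-nilp} and Lemma~\ref{le:no-pure-ext} then give closure under filtered colimits and $\Add\X=\Coprod\X$, and pure subobjects split off.

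For products you decompose $X^I$ and apply Lemma~\ref{le:simply-reflexive-endofinite}. That is valid, modulo transporting those $\Lambda$-module results to $\A$. It is, however, exactly the proof of Corollary~\ref{co:endofinite} that this theorem is meant to replace. The paper instead takes an $\X$-basic subobject $U\subseteq X^I$ containing $X^{(I)}$. It then uses $\character_U=\character_X$ and additivity of characters to show the complement of $U$ vanishes.

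The genuine gap is local T-nilpotence, on which everything else depends, and none of your three routes establishes it.

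\emph{Finiteness route.} The inequality $\sum_i\character_{Y_i}\le\character_X$ does not follow from $\character_U\le\character_X$. In general only $\character_{U\oplus V}\le\character_U+\character_V$ holds, which is the wrong direction (e.g.\ $\character_{U\oplus U}=\character_U$). You would need additivity for summands with no common indecomposable factor, which is what the paper proves in its final paragraph. You also need a finite set of finitely presented generators, and you never derive T-nilpotence from finiteness.

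\emph{Shift route.} The claim that $\phi$ lies in $J(\End X^{(\bbN)})$ is unproved. Justifying it by T-nilpotence of matrices over local rings is circular. By Harada's theorem, an infinite matrix of radical maps between indecomposables lying in the Jacobson radical is essentially equivalent to the local (semi-)T-nilpotence you are trying to prove.

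\emph{Chain route.} Your first route asserts that ``radical maps strictly shrink these submodules'' without any reason, and that assertion is the whole point.

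The missing idea is that no single endomorphism is needed, only products of \emph{different} radical elements of $\End(X)$.
\begin{itemize}
\item Fix $\iota_t\colon X_t\to X$ and $\pi_t\colon X\to X_t$ with $\pi_t\iota_t=1$, and set $h_t=\iota_{t+1}f_t\pi_t\in\End(X)$.
\item For any $\psi\in\End(X)$, the element $1_{X_t}-\pi_t\psi\iota_{t+1}f_t$ is invertible because $f_t$ is radical. Hence $1_X-\psi h_t$ is invertible, so $h_t\in J\coloneqq J(\End X)$.
\item Therefore $\iota_{n+1}f_n\cdots f_1g=h_n\cdots h_1\iota_1g\in J^n\Hom(C,X)$.
\item By Nakayama, $J^n\Hom(C,X)=0$ once $n\ge\ell_{\End(X)}\Hom(C,X)$, and $\iota_{n+1}$ is a split mono, so $f_n\cdots f_1g=0$.
\end{itemize}
This is the paper's argument. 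It also repairs your chain route: with $U_n=\End(X)\iota_{n+1}f_n\cdots f_1g$ one gets $U_{n+1}=\End(X)h_{n+1}\iota_{n+1}f_n\cdots f_1g\subseteq JU_n$, which is exactly the strict shrinking you asserted.
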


\begin{proof}
The subcategory $\Add\X=\Add X$ consists only of endofinite objects. As these are necessarily $\Sigma$-pure injective by Proposition~\ref{pr:subgroup-fin-def} we see $\PExt^1(\X,\Add\X)=0$. Also, $\Add X$ is closed under pure subobjects by Remark~\ref{re:quot-lattice}.

Next, writing $J$ for the Jacobson radical of $\End(X)$, Nakayama's Lemma implies that, for each $C\in\fp\A$, there exists an $n$ such that $J^n\Hom(C,X)=0$. It follows that $\X$ is locally T-nilpotent, so $\Add\X$ equals $\Coprod\X$ and is closed under filtered colimits. 

Finally, let $U$ be an $\X$-basic subobject of $X^I$ containing $X^{(I)}$ as a direct summand. As the characters of $X$, $X^{(I)}$ and $X^I$ agree we must have $\character_U=\character_X$. If the complement to $U$ is non-zero, then it contains an indecomposable summand $V$, and again $\character_{U\oplus V}=\character_X$. On the other hand, as $\End(V)$ is local with nilpotent radical, we see that $fg$ is nilpotent for all morphisms $f\colon U\to V$ and $g\colon V\to U$. It follows that both $\Hom(U,V)$ and $\Hom(V,U)$ are contained in the Jacobson radical of $\End(U\oplus V)$. We deduce that there is an equivalence between the categories of semisimple modules over $\End(U\oplus V)$ and over $\End(U)\times\End(V)$. Applying this to the socle filtration of $\Hom(C,U\oplus V)$ shows that $\character_{U\oplus V}=\character_U+\character_V$, a contradiction. Thus $X^I=U\in\Add\X$.
\end{proof}

\bibliographystyle{plain}
\bibliography{refs}
\end{document}